\newcommand{\ben}{\begin{enumerate}}
\newcommand{\een}{\end{enumerate}}
\newcommand{\bea}{\begin{eqnarray}}
\newcommand{\ba}{\begin{array}}
\newcommand{\bean}{\begin{eqnarray*}}
\newcommand{\ea}{\end{array}}
\newcommand{\eea}{\end{eqnarray}}
\newcommand{\eean}{\end{eqnarray*}}
\newcommand{\beq}{\begin{equation}}
\newcommand{\eeq}{\end{equation}}
\newcommand{\bthm}{\begin{thm}}
\newcommand{\ethm}{\end{thm}}
\newcommand{\blem}{\begin{lem}}
\newcommand{\elem}{\end{lem}}
\newcommand{\bprop}{\begin{prop}}
\newcommand{\eprop}{\end{prop}}
\newcommand{\bcor}{\begin{cor}}
\newcommand{\ecor}{\end{cor}}
\newcommand{\bdfn}{\begin{dfn}}
\newcommand{\edfn}{\end{dfn}}
\newcommand{\brem}{\begin{rem}}
\newcommand{\erem}{\end{rem}}
\newcommand{\bpf}{\begin{proof}}
\newcommand{\epf}{\end{proof}}
\newcommand{\bfact}{\begin{fact}}
\newcommand{\efact}{\end{fact}}
\newcommand{\bobs}{\begin{obs}}
\newcommand{\eobs}{\end{obs}}
\renewcommand{\dim}{\operatorname{HD}}
\newcommand{\hdim}{\HD}
\newcommand{\dense}{\operatorname{dense}}
\newcommand{\hypdim}{\dim_{\operatorname{hyp}}}
\newcommand{\MOD}{\operatorname{mod}}
           \def\cM{\mathcal M}        \def\cP{{\mathcal P}}
\def\N{{\mathbb N}}                \def\Z{{\mathbb Z}}      \def\R{{\mathbb R}}
\def\C{{\mathbb C}}                %\def\T{{\mathbb T}}      
\def\a{\alpha}                             
\def\De{\Delta}               \def\e{\varepsilon}          
\def\g{\gamma}                           
\def\La{\Lambda}              \def\om{\omega}           
               \def\sg{\sigma}
\def\ka{\kappa}
\newcommand{\lam}{\lambda}
\def\1{1\!\!1}
\def\and{\text{ and }}
        \def\diam{\text{\rm {diam}}}
\def\H{\text{{\rm H}}}     \def\HD{\text{{\rm HD}}}
  \def\dense{\text{{\rm dense}}}
\def\EV{\text{{\rm EV}}} 
          \def\P{\text{{\rm P}}}
\def\bi{\bigcap}              \def\bu{\bigcup}
\def\({\bigl(}                \def\){\bigr)}
\def\lt{\left}                \def\rt{\right}
\def\ld{\ldots}                        \def\^{\tilde}
            \def\sms{\setminus}
\def\sbt{\subset}
      \def\imp{\Rightarrow}
\def\sp{\medskip}             \def\fr{\noindent}        
\def\ov{\overline}            \def\un{\underline}
\def\ess{{\rm ess}}
\def\cl{\text{cl}}
\def\om{\omega}
\newcommand{\interior}{\operatorname{int}}
\newcommand{\lP}{\underline{P}}
\newcommand{\uP}{\overline{P}}
\numberwithin{equation}{section}
\newcommand{\m}{{\tt m}}
\newcommand{\bdyit}[2]
\newcommand{\K}{\mathbb{K}}
\newcommand{\Zt}{\widetilde{Z}}
\newcommand{\Zl}{\underline{Z}}
 \newtheoremstyle{numberedstyle}% name
   {9pt}%      Space above, empty = `usual value'
   {9pt}%      Space below
   {\normalfont}% Body font
   {}%         Indent amount (empty = no indent, \parindent = para indent)
   {\bfseries}% Thm head font
   {.}%        Punctuation after thm head
   {\newline}% Space after thm head: \newline = linebreak
   {}%         Thm head spec
\newtheorem{thm}{Theorem}[section]%
\newtheorem{lem}[thm]{Lemma}%
\newtheorem{cor}[thm]{Corollary}%
\newtheorem{prop}[thm]{Proposition}%
\newtheorem{obs}[thm]{Observation}%
\newtheorem{dfn}[thm]{Definition}%
\newtheorem{rem}[thm]{Remark}%
\newtheorem{deflem}[thm]{Definition and Lemma}%
\theoremstyle{numberedstyle}
\newtheorem{defn}[thm]{Definition}%
\newtheorem*{rep@theorem}{\rep@title}
\newcommand{\newreptheorem}[2]{%
\newenvironment{rep#1}[1]{%
\def\rep@title{#2 \ref{##1}}%
\begin{rep@theorem}}%
 {\end{rep@theorem}}}
\newcommand{\U}{\mathcal{U}}
\title[Non-autonomous iterated function systems]{%%
   Non-autonomous \\ conformal iterated function systems \\ and
    Moran-set constructions}
\author{Lasse Rempe-Gillen}
\address{Dept.\ of Mathematical Sciences, University of Liverpool,
 Liverpool L69 7ZL, UK} 
\email{l.rempe@liverpool.ac.uk}
\author{Mariusz Urba\'nski}
\address{Department of Mathematics, University of North Texas, P.O.~Box 311430,
   Denton, TX 76203-1430, USA}
\email{urbanski@unt.edu}
\thanks{The first author was supported by EPSRC Fellowship
  EP/E052851/1. The second author was supported in part by
NSF Grant DMS 1001874.} 
\subjclass[2010]{28A80 (Primary); 37C45, 37F10 (Secondary)}
\newcommand{\const}{\operatorname{const}}
\begin{document} 
 \begin{abstract}
  We study non-autonomous conformal
  iterated function systems, with finite or 
  countable infinite alphabet alike. 
   These differ from the
  usual (autonomous) iterated function systems in that the contractions applied
  at each step in time are allowed to vary. 
  (In the case where all maps are affine similarities, the resulting
  system is also called a ``Moran set construction''.)

We shall show that, given a suitable restriction on the growth of the number
  of contractions used at each step, the Hausdorff dimension of the limit
  set of such a system is determined by an equation known as
  \emph{Bowen's formula}. We also give examples that show
   the optimality of our results. 

In addition, we  
  prove Bowen's formula for a class of infinite alphabet-systems and
  deal with Hausdorff and packing measures for finite systems, 
  as well as continuity of topological pressure and Hausdorff dimension for
  both finite and infinite systems. In particular we strengthen the
  existing continuity results for infinite autonomous systems. 

 As a simple application of our results, we show that, for a transcendental
  meromorphic function $f$, the Hausdorff dimension of the set of transitive
  points (i.e., those points whose orbits are dense in the Julia set)
  is bounded from below by the \emph{hyperbolic dimension} of $f$ (in the sense of Shishikura).
 \end{abstract}
 
\maketitle

\section{Introduction}

\subsection{Background}
 In the classical theory of \emph{iterated function systems},
  one is given a finite set 
  $\Phi=\{\phi_i\}_{i\in I}$ of uniformly contracting
  affine similarities $\phi_i:\R^d\to\R^d$ and studies the
  \emph{limit set} $J=J(\Phi)$ of the system $\Phi$, defined
  as the set of possible limit points of sequences
  $\phi_{\om_1}(\phi_{\om_2}(\dots(\phi_{\om_n}(x))\dots))$, with
  $x\in\R^d$ and $\om_j\in I$ for all $j$. Famous examples of
  such ``self-similar'' sets include the Middle-third Cantor set, the
  Sierpi\'nski triangle, the van Koch curve, and many others. 

The theory of systems where the contractions $\phi_i$ are not necessarily
  affine similarities but merely conformal, and where the alphabet $I$ is not
  necessarily finite but only required to be countable, is also well
  developed; see e.g.\
  \cite{mulondon} (which laid the foundation for the theory of
  infinite systems), 
  \cite{mauldinurbanskiGDMS}, and \cite{hensley}.

 From the point of view of dynamical systems, the above represent 
  an \emph{autonomous} setting: the rule applied to the system
  is independent of the time $n$. The analogy to ordinary
  dynamical systems is particularly transparent if the ranges of all
  contractions $\phi_i$, $i\in I$, are mutually disjoint. Then one can 
  consider the dynamics on $J(\Phi)$  given by the \emph{inverses}
  of the functions $\phi_i:J\to\phi_i(J)$. This gives an
  expanding non-invertible dynamical system on the limit set. Conversely,
  we may often pass from a non-invertible dynamical system $f$ to an iterated
  function system whose maps $\phi_i$ are inverse branches of iterates of $f$.

The first and often most important question regarding an autonomous conformal
  iterated function system $\Phi$ is how
  to determine the Hausdorff dimension $\HD(J(\Phi))$ 
  of the limit set $J(\Phi)$.
  In the classical setting described above, it is well-known that this 
  dimension is given by the solution $t$ to the equation
  $\sum_{i\in I}\|D\phi_i\|^t=1$. For a general autonomous conformal iterated
   function system, the number $t$ generalizes to the \emph{Bowen dimension}
   $B(\Phi)$. Informally speaking, $B(\Phi)$ is the number that
   is obtained by modifying the usual definition of Hausdorff dimension to
   allow only the natural coverings of $J(\Phi)$ corresponding to the
   $n$-th levels in the iterative construction. Then
   \emph{Bowen's formula} states that $\HD(J(\Phi))= B(\Phi)$.
    The pioneer work here belongs to Moran \cite{moran} and Bowen
    \cite{bowen}, while Bowen's formula was proved     
    in \cite{mauldinurbanskiGDMS} for general
    infinite autonomous 
    conformal iterated autonomous iterated function systems (and even more
    generally ``graph directed Markov systems''). 

\subsection*{Non-autonomous systems}
 It is natural, and frequently necessary in applications, to consider the
 \emph{non-autonomous} version of the above setting, where the system
 $\Phi$ is allowed to vary with time $n$. More precisely, 
 the system $\Phi$ consists of a sequence $(\Phi^{(n)})_{n\in\N}$ of
  collections of
  conformal contractions, where $\Phi^{(n)}=\{\phi_i^{(n)}\}_{i\in I^{(n)}}$ may
  vary with $n$. The \emph{limit set} $J(\Phi)$ now consists of the
  possible limits of 
  sequences $\phi_{\om1}^{(1)}(\phi_{\om_2}^{(2)}( \dots \circ
  \phi_{\om_n}^{(n)}(x))\dots))$, where 
  $\om_j\in I^{(j)}$ for all $j$. 

 In the case where all $\phi_i^{(n)}$ are affine similarities,
  this is also referred to as
  a \emph{Moran set construction}.\footnote{%
 Moran set constructions
  are somewhat more general than our non-autonomous systems, in that
  the placement of pieces at
  level $n+1$
  is allowed to occur independently for each level $n$ piece. All our results
  (and proofs) will remain true in this somewhat more general set-up. However,
  we feel that our setting is natural from the point of view of 
  dynamics, and prefer not to increase the notational complexity further.}
  This
   entire paper is devoted to study conformal non-autonomous systems. However,
  we also touch on autonomous systems: On the one hand, they occur as 
  auxiliary objects in our non-autonomous constructions, on the other hand,
   we also prove new meaningful theorems about
  autonomous systems.

Our central objective is to establish versions of Bowen's formula that
  are as general as possible. But we will also touch on other
  aspects, such as the continuity of Hausdorff dimension and
  the relationship of nonautonomous systems to \emph{random} iterated
  function systems
  as introduced in \cite{royurblambda}.

\subsection*{Bowen's Formula.} The definition of the Bowen dimension
$B(\Phi)$ generalizes naturally to 
  non-autonomous systems (see Definition~\ref{defn:bowen}). However,
  in this setting 
   Bowen's formula $\HD(J(\Phi))=B(\Phi)$ no longer holds in general.
   (This follows from our theorems below, but has been known for a long time; 
    compare e.g.\ \cite{moransetsclasses}.) Our key observation for finite systems
    (i.e., those for which all index sets $I^{(n)}$ are finite) is that Bowen's formula
    \emph{does} hold when we restrict the growth of the size $\# I^{(n)}$ of the $n$-th level
    index sets. Throughout the article, the dimension of the ambient space is denoted by $d$. 
%%%LRG - introduced d for the referee.

\begin{thm}[Bowen's formula for systems of sub-exponential
  growth]\label{thm:bowensubexponential}
Suppose that $\Phi$ is a non-autonomous conformal iterated function system
of \emph{sub-exponential growth}:
\[ 
\lim_{n\to\infty} \frac1n\log \# I^{(n)}= 0.  
\] 
Then Bowen's Formula holds for $\Phi$, i.e. $\HD(J(\Phi))=B(\Phi)$.

On the other hand, suppose that $0<t_1<t_2<d$ and let $\eps>0$. Then there exists a non-autonomous conformal iterated function
   system $\Phi$ such that 
  \[ \limsup_{n\to\infty} \frac1n \log \# I^{(n)} \leq \eps, \]
   $\HD(J(\Phi))=t_1$ and $B(\Phi)=t_2$.
\end{thm}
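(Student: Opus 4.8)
The plan is to prove the two halves separately. For the first (positive) statement, I would establish that the upper bound $\HD(J(\Phi))\le B(\Phi)$ always holds — this is the easy direction, coming directly from the fact that the natural $n$-th level covers of $J(\Phi)$ are admissible in the definition of Hausdorff measure, so $B(\Phi)$ is always an upper bound for the dimension regardless of any growth hypothesis. The real content is the lower bound $\HD(J(\Phi))\ge B(\Phi)$ under sub-exponential growth. For this I would fix $t<B(\Phi)$ and aim to build a measure $\mu$ on $J(\Phi)$ satisfying a Frostman-type estimate $\mu(B(x,r))\lesssim r^t$, so that $\HD(J(\Phi))\ge t$. The standard approach is to mimic the Mauldin--Urba\'nski construction of conformal (Gibbs) measures, but now level by level: choose a sequence $t_n\uparrow B(\Phi)$ (or work with a single $t<B(\Phi)$), and distribute mass across the cylinders of level $n$ in proportion to $\|D\phi_{\om_1}^{(1)}\circ\cdots\circ\phi_{\om_n}^{(n)}\|^t$ suitably normalized by the partition-function-type quantities $Z_n(t)$ that enter the definition of $B(\Phi)$. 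The sub-exponential growth hypothesis enters precisely here: it guarantees that the ``overhead'' from having $\#I^{(n)}$ branches at step $n$ — which in a ball-counting argument contributes a factor growing like $\prod \#I^{(j)}$ or a comparison constant of that size — is subexponential in $n$, hence negligible against the exponential gap $Z_n(t)/Z_n(B(\Phi))$ coming from $t<B(\Phi)$. So the key estimate to nail down is that, for $t<B(\Phi)$, the correction factors from branch multiplicity do not swamp the geometric decay, and this is exactly what $\frac1n\log\#I^{(n)}\to0$ buys. I expect this balancing — making the Frostman estimate uniform in the scale $r$ while controlling the level-dependent constants — to be the main obstacle, essentially because one must interleave scales $r$ that fall ``between'' consecutive construction levels and show the bounded-distortion and bounded-overlap constants accumulate only subexponentially.

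For the second (sharpness) statement, the plan is an explicit construction. Given $0<t_1<t_2<d$ and $\eps>0$, I would build $\Phi$ so that at ``most'' times $n$ the system behaves like a fixed autonomous system tuned to have Bowen dimension (and hence, by the autonomous theory, Hausdorff dimension) equal to $t_2$, but along a sparse sequence of times $n_k\to\infty$ one inserts a ``thinning'' stage: a single-branch or few-branch stage with very strong contraction. The effect of such thinning stages is asymmetric. They barely affect $B(\Phi)$, because in the Bowen-dimension partition function $Z_n(t)=\sum \|D(\phi_{\om_1}^{(1)}\circ\cdots\circ\phi_{\om_n}^{(n)})\|^t$ a strong uniform contraction by a factor $\rho_k$ at step $n_k$ simply multiplies $Z_n(t)$ by $\rho_k^t$ for all $n\ge n_k$; if the $n_k$ are chosen sparse enough and the $\rho_k$ chosen appropriately, the exponential growth rate of $Z_n(t)$ is unchanged at $t=t_2$, so $B(\Phi)$ stays $t_2$. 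But for the \emph{actual} Hausdorff dimension, these repeated strong contractions force the limit set to be ``thin at infinitely many scales'': one gets an upper box/Hausdorff bound at the scales corresponding to the levels $n_k$ that is governed by $t_1$ rather than $t_2$, because the efficient covers available right after a thinning stage have much smaller total $t_1$-cost. Concretely, I would make the contraction ratios at the thinning stages decay fast enough (doubly exponentially, say, relative to the $n_k$) that the Hausdorff $t_1$-content, evaluated along the subsequence of scales $\diam$ of level-$n_k$ cylinders, stays bounded, forcing $\HD(J(\Phi))\le t_1$; and conversely one makes the ``bulk'' autonomous stages rich enough, and uses the positive direction of the theorem on the autonomous sub-blocks, to get $\HD(J(\Phi))\ge t_1$. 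The growth constraint $\limsup\frac1n\log\#I^{(n)}\le\eps$ is arranged simply by taking the autonomous building block to have alphabet size $e^{\eps}$-ish per step on average — i.e. spreading a bounded alphabet's worth of branching over enough steps, or directly using an infinite-but-thin allocation — and checking that the inserted thinning stages (which have \emph{fewer} branches) only help.

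I would organize the write-up as: (i) recall the definition of $B(\Phi)$ and the partition functions $Z_n(t)$, and record the elementary upper bound $\HD(J(\Phi))\le B(\Phi)$; (ii) prove the Frostman lower bound under sub-exponential growth via the level-by-level conformal measure, isolating the one inequality where $\frac1n\log\#I^{(n)}\to0$ is used; (iii) set up the two-regime construction for the converse, computing $B(\Phi)=t_2$ from the partition functions and the sparse-contraction bookkeeping; (iv) prove $\HD(J(\Phi))\le t_1$ by exhibiting the efficient covers at the scales $\diam$(level $n_k$), and $\HD(J(\Phi))\ge t_1$ using part (ii) on the autonomous sub-blocks; (v) verify the growth bound. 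The genuinely delicate point in (iii)--(iv) is choosing the two sequences $(n_k)$ and $(\rho_k)$ simultaneously so that the \emph{same} construction has $B=t_2$ but $\HD=t_1$ — the two requirements pull the parameters in opposite directions and one needs the separation $t_1<t_2$ together with sparsity of $(n_k)$ to reconcile them.
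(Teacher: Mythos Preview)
Your plan for the positive direction is close to the paper's: build a Frostman measure via the mass distribution principle. Be aware, though, that sub-exponential growth of $\#I^{(n)}$ does not by itself control the ratio $\rho_n=\max_{a,b\in I^{(n)}}\|D\phi_a^{(n)}\|/\|D\phi_b^{(n)}\|$, and without such control the Frostman ball estimate can fail (very small pieces at level $n$ spoil the comparison between $Z_n(t)$ and the quantity that actually governs the estimate). The paper resolves this in two steps: first prove the lower bound under an additional \emph{weak balancing} hypothesis on $\rho_n$, and then show that any sub-exponentially bounded system contains a weakly balanced subsystem with the same lower pressure, so Bowen's formula transfers from the subsystem.

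The real gap is in your counterexample. A single-branch thinning stage with contraction $\rho_k$ multiplies $Z_n(t)$ by $\rho_k^t$ for \emph{every} $t$, so its effect on $\frac1n\log Z_n(t)$ scales linearly in $t$ and cannot separate the behaviour at $t_1$ from that at $t_2$. Concretely: to force $\HD\le t_1$ via the level-$n_k$ covers you need $Z_{n_k}(t_1)$ bounded, which forces $\sum_{j\le k}|\log\rho_j|$ to grow linearly in $n_k$; this same growth then makes $\frac{1}{n_k}\log Z_{n_k}(t_2)$ strictly negative, so $\lP(t_2)<0$ and $B(\Phi)$ collapses to $t_1$, not $t_2$. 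You get no gap. The paper's construction is essentially opposite to yours: at the special times $n_k$ it inserts \emph{many} maps (a number $M_k\to\infty$), all with the same small contraction $\lambda_k/M_k$ and with images packed into a single short interval $[0,\lambda_k]$. The natural level-$n_k$ contribution $M_k(\lambda_k/M_k)^{t_2}$ to the partition function is then tuned to be $\approx 1$ by choice of $M_k$, preserving $B(\Phi)=t_2$; meanwhile the single interval $[0,\lambda_k]$ is a far more efficient cover, and $\lambda_k$ is chosen so that $Z_{n_k-1}(t_1)\lambda_k^{t_1}=1$, forcing $\HD\le t_1$. The mechanism that separates $B$ from $\HD$ is not strong contraction but spatial \emph{clustering}: many small pieces sitting inside one small set, so that the natural cover grossly overcounts the Hausdorff content.
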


The counterexamples we construct to prove the second part of the theorem
  are very irregular: there are many stages consisting of pieces
  of definite size, but at some stages the number of pieces is large. Also, the examples cannot be
  extended to the case $t_1=0$ or $t_2=d$. The following theorem shows that both of these restrictions are necessary.

 \begin{thm}[Bowen's formula for some systems of exponential
   growth]\label{thm:bowenexponential} 
Suppose that $\Phi$ is a non-autonomous conformal iterated function system
of at most \emph{exponential growth}:
\[ 
\limsup_{n\to\infty} \frac1n\log \# I^{(n)} < \infty.  
\] 
Suppose additionally that one of the following is true:
\begin{enumerate}
   \item $\|D\phi_i^{(n)}\|\to 0$ uniformly in $i$ as $n\to\infty$, \label{item:smallpieces}
   \item $\HD(J(\Phi))=0$, or   \label{item:extremalHD}
   \item $B(\Phi)=d$.  \label{item:extremalB}
\end{enumerate}
Then Bowen's Formula holds for $\Phi$, i.e. $\HD(J(\Phi))=B(\Phi)$.

On the other hand, let $(\alpha_n)_{n\in\N}$ be an arbitrary sequence of positive integers such that 
    $\lim_{n\to\infty} \log\alpha_n/n = \infty$. Then
    there exists a non-autonomous conformal iterated function
   system $\Phi$ such that $\# I^{(n)} \leq \alpha_n$ for all $n$, and such that
   (\ref{item:smallpieces}) to (\ref{item:extremalB}) hold for $\Phi$.
\end{thm}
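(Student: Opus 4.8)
\medskip
\noindent\textbf{Proof idea for the counterexamples in Theorem~\ref{thm:bowenexponential}.}\
We build $\Phi$ by hand as a Moran--set construction using affine similarities of the closed unit ball $X\subseteq\R^d$: all maps at a given level $n$ will share a contraction ratio $\rho_n$, and the images of the $\#I^{(n)}$ maps at level $n$ will be packed, with definite mutual separation, inside a \emph{concentric} sub-ball of their common parent piece. Replacing $\alpha_n$ by $\min_{k\ge n}\alpha_k$ we may assume $(\alpha_n)$ is nondecreasing; this preserves $\log\alpha_n/n\to\infty$. Choose (recursively, see below) a very sparse sequence $q_1<q_2<\cdots$ and set $c_n:=l$ for $q_{l-1}<n\le q_l$ (with $q_0:=0$), so that $c_n\to\infty$ but extremely slowly. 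At a \emph{generic} level $n\notin\{q_l\}$ we take $\rho_n:=2^{-c_n}$, pack the children of each parent, with definite separation, into the concentric sub-ball of half its diameter, and take $\#I^{(n)}\asymp_d 2^{dc_n}$ as large as this permits (set $\#I^{(n)}:=1$ at the finitely many levels where $2^{dc_n}>\alpha_n$). At a \emph{special} level $n=q_l$ we instead pack the children into the concentric sub-ball of diameter $\eta_l\cdot\operatorname{diam}(\text{parent})$, with $\rho_{q_l}:=\eta_l^{\,c_{q_l}}$ and $\#I^{(q_l)}\asymp_d(\eta_l/\rho_{q_l})^d=\eta_l^{-d(c_{q_l}-1)}$ as large as that permits, where $\eta_l\in(0,\tfrac14)$ is chosen as \emph{large} as the constraint $\#I^{(q_l)}\le\alpha_{q_l}$ allows, i.e.\ with $\log(1/\eta_l)\asymp_d \log\alpha_{q_l}/c_{q_l}$. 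In all cases $\sup_i\|D\phi_i^{(n)}\|=\rho_n\le 2^{-c_n}\to 0$, which gives~(\ref{item:smallpieces}), and $\#I^{(n)}\le\alpha_n$ throughout.

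Since all level-$j$ maps share the ratio $\rho_j$, the cost of the natural level-$n$ cover is $Z_n(t):=\sum_{\omega}\|D\phi_\omega\|^t=\prod_{j\le n}\theta_j(t)$ with $\theta_j(t)=\#I^{(j)}\rho_j^t\asymp_d\eta_j^{\,d}\rho_j^{-(d-t)}$ (here $\eta_j=\tfrac12$ at generic levels). For $t<d$ the factor $\rho_j^{-(d-t)}\to\infty$ as $\rho_j\to0$; the factor $\eta_j^{\,d}$ only reinforces this, since at a special level $\rho_j=\eta_j^{c_{q_l}}$ is even smaller than $\eta_j$, so $\theta_j(t)\asymp_d\eta_j^{\,d-c_{q_l}(d-t)}\to\infty$ as well. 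Hence $Z_n(t)\to\infty$ for every $t<d$, so $B(\Phi)\ge d$ by the definition of the Bowen dimension (Definition~\ref{defn:bowen}); and $\theta_j(d)\asymp_d\eta_j^{\,d}<1$ gives $Z_n(d)\to0$, whence $B(\Phi)=d$. This is~(\ref{item:extremalB}).

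To see that $\HD(J(\Phi))=0$ --- item~(\ref{item:extremalHD}) --- we cover $J(\Phi)$ far more economically than the natural covers do, using the \emph{clusters}. Just before a special level $q_l$, each piece of level $q_l-1$ contains every point of $J(\Phi)$ lying in it inside the concentric cluster ball of diameter $\eta_l\cdot\operatorname{diam}(X)\prod_{j<q_l}\rho_j$; these $K_{q_l-1}:=\prod_{j<q_l}\#I^{(j)}$ clusters cover $J(\Phi)$ with total $t$-cost $\asymp\eta_l^{\,t}\,Z_{q_l-1}(t)$, and their diameters tend to $0$ as $l\to\infty$. Now $\log\theta_j(t)\le\log\#I^{(j)}$ for every $j$, so $\log Z_{q_l-1}(t)\le\sum_{l'<l}\log\alpha_{q_{l'}}+C\,q_l\,c_{q_l}$ for a constant $C=C(d)$ (the sum bounds the special-level factors, since $\#I^{(q_{l'})}\le\alpha_{q_{l'}}$, and $q_lc_{q_l}$ crudely bounds $\sum_{j<q_l}c_j$, hence the generic-level factors), while $\log(1/\eta_l)\asymp_d\log\alpha_{q_l}/c_{q_l}$. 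Choosing $q_l$ in the recursion so large that $\log\alpha_{q_l}\ge l^{3}\bigl(q_l+\sum_{l'<l}\log\alpha_{q_{l'}}\bigr)$ --- which is possible because $\log\alpha_{q_l}/q_l\to\infty$ --- forces, for each fixed $t>0$, the $t$-cost of the cluster cover at level $q_l$ to tend to $0$ as $l\to\infty$. Therefore $\mathcal H^t(J(\Phi))=0$ for all $t>0$, i.e.\ $\HD(J(\Phi))=0$; in particular Bowen's formula fails for $\Phi$, as required.

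The only genuine difficulty is this final balancing act: item~(\ref{item:extremalHD}) needs the special ratios $\eta_l^{c_{q_l}}$ to be extravagantly small, whereas the cardinality bound $\#I^{(q_l)}\le\alpha_{q_l}$ forbids $\eta_l$ from being too small, and the two are reconciled only because $\log\alpha_n\to\infty$, and only if the parameters are fixed in the right order --- the slowly growing $c_n$, then the enormous gaps $q_l$, then $\eta_l$ maximal for the prescribed ratio. Two consistency points must also be checked, but both are automatic from the sparseness of $\{q_l\}$ and the slow growth of $c_n$: the generic levels contribute only lower-order terms to $\log Z_{q_l-1}(t)$, and adjoining the special levels does not lower $B(\Phi)$, since shrinking $\eta_l$ only enlarges $\#I^{(q_l)}$ and hence $\theta_{q_l}(t)$ for $t<d$.
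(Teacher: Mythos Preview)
Your construction is correct and achieves the same conclusion as the paper, but by a genuinely different route. The paper does not build a single system directly; instead it reuses Theorem~\ref{thm:counterexamples} as a black box. For each $\eps_k\to 0$ it takes the system $\Theta(\eps_k)$ produced there (with $B=d-\eps_k$ and $\HD=\eps_k$, and with contraction ratios tending to $0$ as $\eps_k\to 0$), and then concatenates these in blocks $[N_{k-1},N_k)$ along a rapidly growing sequence $N_k$. The extremal values $B(\Phi)=d$ and $\HD(J(\Phi))=0$ then emerge only in the limit from the diagonal concatenation, and the growth bound $\#I^{(n)}\le\alpha_n$ is met because each $\Theta(\eps_k)$ has some finite exponential growth rate while $\alpha_n$ is super-exponential.

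Your approach is more self-contained: you get $B(\Phi)=d$ in one stroke by letting the common contraction ratio $2^{-c_n}$ at generic levels drift to $0$, and you get $\HD(J(\Phi))=0$ from a single sequence of cluster covers at the sparse special levels $q_l$, with the balancing inequality $\log\alpha_{q_l}\ge l^3\bigl(q_l+\sum_{l'<l}\log\alpha_{q_{l'}}\bigr)$ doing the work that the paper's diagonalisation does. The trade-off is that the paper's argument is shorter once Theorem~\ref{thm:counterexamples} is in hand, whereas yours avoids that dependency and makes the mechanism (slow drift of $c_n$ versus super-exponential spikes at $q_l$) completely explicit. Both arguments rest on the same underlying idea: a clustering of pieces into a small concentric sub-ball at selected levels produces covers far more efficient than the natural ones.
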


 The idea of proof for both of the preceding theorems is as follows. We prove a general lower bound on 
   $\HD(\Phi)$ for a finite non-autonomous conformal iterated function system, and use this bound to 
   establish the above results assuming additional ``balancing'' assumptions (meaning that we restrict
   how much the pieces at a given level may differ in size). We are then able to remove those assumptions
   by studying suitable subsystems (this is an idea that stems from the study of infinite iterated function systems;
   the novelty of our approach lies in its application to finite cases). 

 A special case of Theorem \ref{thm:bowenexponential} concerns systems
  of sufficiently regular 
   exponential growth:

\begin{prop}[Regular exponential growth]\label{prop:bowenregular}
Suppose that $\Phi$ a system such that both limits
    \[ a := \lim_{n\to\infty}\frac1n\log\# I^{(n)} \]
and
\[
b:=\lim_{n\to\infty,j\in I^{(n)}}\frac1n\log\bigl(1/\|D\phi_j^{(n)}\|\bigr)
\]
exist and are finite and positive. Then $B(\Phi)=a/b$, and hence $\hdim(J(\Phi))=B(\Phi)=a/b$. 
\end{prop}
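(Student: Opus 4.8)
The plan is to compute the Bowen dimension $B(\Phi)$ directly from its definition (Definition~\ref{defn:bowen}) and then to invoke Theorem~\ref{thm:bowenexponential} in order to pass from $B(\Phi)$ to $\HD(J(\Phi))$. Recall that $B(\Phi)$ is the critical exponent separating those $t$ for which the natural sums
\[ Z_n(t) := \sum_{\omega\in I^{(1)}\times\dots\times I^{(n)}} \|D\phi_\omega\|^t, \qquad \phi_\omega := \phi_{\omega_1}^{(1)}\circ\dots\circ\phi_{\omega_n}^{(n)}, \]
grow super-exponentially in $n$ from those for which they decay to $0$. By the bounded distortion property of a conformal system, $\|D\phi_\omega\|$ is comparable, up to a multiplicative constant independent of $n$ and of $\omega$, to $\prod_{k=1}^n\|D\phi_{\omega_k}^{(k)}\|$, so that $Z_n(t)$ is comparable to the product $\prod_{k=1}^n S_k(t)$ with $S_k(t):=\sum_{j\in I^{(k)}}\|D\phi_j^{(k)}\|^t$.

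The key step is to estimate each factor $S_k(t)$ using the regularity of the two limits. Fix $t>0$ and a small $\delta>0$. By the definitions of $a$ and $b$ there is $N=N(\delta)$ such that $e^{(a-\delta)k}\leq\# I^{(k)}\leq e^{(a+\delta)k}$ and $e^{-(b+\delta)k}\leq\|D\phi_j^{(k)}\|\leq e^{-(b-\delta)k}$ for all $k\geq N$ and all $j\in I^{(k)}$; multiplying these estimates through gives
\[ e^{(a-bt-\delta(1+t))k}\leq S_k(t)\leq e^{(a-bt+\delta(1+t))k}\qquad(k\geq N), \]
hence $\frac1k\log S_k(t)\to a-bt$. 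The finitely many factors with $k<N$ alter $Z_n(t)$ only by a bounded amount, so summing over $k$ and dividing by $n$ shows that $\frac1n\log Z_n(t)$ grows like $(a-bt)\cdot\frac n2$; choosing $\delta$ small relative to $|a-bt|$ then gives $Z_n(t)\to\infty$ when $t<a/b$ and $Z_n(t)\to 0$ when $t>a/b$. This is exactly the assertion $B(\Phi)=a/b$, and it is insensitive to whichever of the (equivalent) upper or lower conventions Definition~\ref{defn:bowen} uses, since in both regimes $\frac1n\log Z_n(t)$ converges to $\pm\infty$.

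It then remains to verify the hypotheses of Theorem~\ref{thm:bowenexponential}. The system has at most exponential growth because $\limsup_n\frac1n\log\# I^{(n)}=a<\infty$, and condition~(\ref{item:smallpieces}) holds because $b>0$ forces $\sup_{j\in I^{(k)}}\|D\phi_j^{(k)}\|\leq e^{-(b-\delta)k}\to 0$ as $k\to\infty$. Theorem~\ref{thm:bowenexponential} therefore gives $\HD(J(\Phi))=B(\Phi)=a/b$, completing the proof.

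I do not anticipate a genuine obstacle: the computation is elementary, and the only points needing care are the uniformity in $j\in I^{(k)}$ of the bounds coming from the limit $b$ (which is precisely what the hypothesis provides) and the uniform boundedness of the distortion constants of the compositions $\phi_\omega$ (which follows from the standing assumptions on non-autonomous conformal systems, as the contraction is at least geometric). The substantive input of the proposition is Theorem~\ref{thm:bowenexponential}, on which its second conclusion relies.
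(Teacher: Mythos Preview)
Your approach is essentially the same as the paper's: compute $B(\Phi)=a/b$ by direct estimation of $Z_n(t)$, then invoke Theorem~\ref{thm:bowenexponential} (case~(\ref{item:smallpieces})) to obtain $\HD(J(\Phi))=B(\Phi)$. The paper phrases the computation slightly differently, bounding the ratio $Z_{n+1}(t)/Z_n(t)$ between $\const\cdot\#I^{(n+1)}\cdot\un c_{n+1}^t$ and $\#I^{(n+1)}\cdot\ov c_{n+1}^t$ and observing that both bounds tend to $0$ or $\infty$ according to whether $t>a/b$ or $t<a/b$; this is equivalent to your product decomposition $Z_n(t)\asymp\prod_k S_k(t)$.

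One small correction: your claim that $\|D\phi_\omega\|$ is comparable to $\prod_{k=1}^n\|D\phi_{\omega_k}^{(k)}\|$ with a constant \emph{independent of $n$} is not quite right. Bounded distortion (Definition~\ref{defn:ncifs}(c)) gives only $K^{-(n-1)}\prod_k\|D\phi_{\omega_k}^{(k)}\|\leq\|D\phi_\omega\|\leq\prod_k\|D\phi_{\omega_k}^{(k)}\|$, so the comparison constant is $K^{n-1}$. This does not damage your argument, since $\frac1n\log Z_n(t)$ then differs from $\frac1n\sum_k\log S_k(t)$ by at most the bounded quantity $t\log K$, while the latter behaves like $(a-bt)\,n/2\to\pm\infty$; but the sentence should be amended. (The paper's ratio formulation sidesteps this bookkeeping by peeling off one factor at a time, incurring only a single $K$ per step.)
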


As an application of our results, let $\lambda>1$ and consider the set
of real numbers $x\in [0,1]$ whose continued fraction 
   expansion $x=[a_0,a_1,a_2,\dots]$ satisfies 
   $\lambda^n < a_n < \lambda^{n+1}$ for sufficiently large $n$. %%%LRG
  It
   follows from a result of Jordan and Rams 
   \cite{jordanrams} that this set has Hausdorff dimension $1/2$. We
   can also see this directly from the above Proposition; indeed, here
   we have 
      \[ a = \lim_{n\to\infty} \frac{1}{n} \log
      (\lambda^{n+1}-\lambda^n) = \log \lambda\] 
    and
     \[ b = \lim_{n\to\infty}\frac{1}{n} \log \lambda^{2(n+1)} = 2\log
     \lambda. 
\] 
Having established Bowen's formula for finite systems we 
    also prove a version for a large class sufficiently regular
    infinite ones. This is provided by Corollary~\ref{BowenM}
    in  
    Section~\ref{scia} (Systems with Countably Infinite Alphabet).
   We also briefly comment on the relation of our results to the study 
   of random iterated function systems, as well as the continuity
   of pressure and Hausdorff dimension
   (following the line of research initiated in 
   \cite{royurblambda} 
    and continued in \cite{royurb2} %%%LRG - royurb2 was listed twice - change ok?
   in the context of
    autonomous systems).
    The latter is done under
    considerably weaker hypotheses than in \cite{royurblambda} giving
    stronger results even for autonomous systems. 

 In an appendix, we provide a simple and useful application of our methods
   to complex dynamics:
 \begin{thm}[Transitive points of meromorphic functions]\label{thm:meromorphic}
  Let $f:\C\to\Ch$ be a non-linear, non-constant meromorphic function, and let 
   $J_{\dense}$ denote the set of \emph{transitive points}. That is, 
   $J_{\dense}$ consists of those points in the Julia set 
   $J(f)$ whose orbit is dense in $J(f)$. 

  Then $\HD(J_{\dense}(f)) \geq \hypdim(f)$, where $\hypdim(f)$ denotes the
   \emph{hyperbolic dimension} of $f$ in the sense of Shishikura.
 \end{thm}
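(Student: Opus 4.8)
The plan is to realize the hyperbolic dimension of $f$ via hyperbolic subsets on which the dynamics is expanding, and then to approximate the transitive set from inside by the limit set of a non-autonomous conformal iterated function system built from inverse branches of $f$ along a carefully chosen itinerary. By Shishikura's definition, $\hypdim(f) = \sup_X \HD(X)$, where the supremum is over compact, forward-invariant, hyperbolic (expanding, hence in particular disjoint from a neighbourhood of the postsingular set and the exceptional set) subsets $X$ of $J(f)$; moreover each such $X$ can be taken to support an expanding repeller on which $f$ is topologically transitive, and whose dimension is given by Bowen's formula for the associated (autonomous) conformal iterated function system $\Psi_X$ consisting of the inverse branches of $f|_X$. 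Thus it suffices to show that for each such $X$ we have $\HD(J_{\dense}(f)) \ge \HD(X)$; taking the supremum over $X$ then yields the theorem.

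Fix such a hyperbolic repeller $X$ with inverse-branch system $\Psi_X = \{\psi_i\}_{i \in I}$, $I$ finite, all $\psi_i$ uniformly contracting conformal maps defined on a fixed neighbourhood $V$ of $X$, with $\HD(X) = B(\Psi_X) =: t_0$. Since $J(f)$ is separable and $f$ is topologically transitive on $J(f)$ (for non-linear non-constant meromorphic $f$, modulo the exceptional set), we may enumerate a countable dense family of ``excursions'' $\gamma_1, \gamma_2, \dots$: finite compositions of inverse branches of iterates of $f$ that steer a point from near $X$ into an arbitrarily small neighbourhood of a prescribed point of $J(f)$ and back to near $X$, staying in a region where these branches are well-defined, conformal, and have bounded distortion. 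We then build a non-autonomous system $\Phi = (\Phi^{(n)})_n$ by interleaving: long blocks of the maps from $\Psi_X$ (allowing the full alphabet $I$ at those steps) separated by single transitional steps that insert the excursions $\gamma_k$ in a schedule visiting every element of a dense set infinitely often and with the correct ``return'' geometry, while letting the length of the $\Psi_X$-blocks grow. The limit set $J(\Phi)$ then consists of points whose forward orbit under $f$ is dense in $J(f)$, so $J(\Phi) \subseteq J_{\dense}(f)$ (up to the measure-zero exceptional set, which can be avoided).

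It remains to compute $\HD(J(\Phi))$. Because the excursions occur with vanishing frequency (the blocks of $\Psi_X$-steps grow in length) and each contributes only a uniformly bounded number of alphabet symbols, $\Phi$ has sub-exponential growth: $\frac1n \log \# I^{(n)} \to 0$, since $\# I^{(n)} \le \max(\#I, \max_k \ell(\gamma_k))$ along $\Psi_X$-blocks and equals $1$ at transitional steps, and in any case the contribution is negligible on the $\log$-scale once divided by $n$. Hence Theorem~\ref{thm:bowensubexponential} applies and gives $\HD(J(\Phi)) = B(\Phi)$. A direct estimate on the Bowen dimension — using that the diameters of the level-$n$ pieces are, up to the bounded-distortion constant, products of the contractions of the $\psi_i$'s over the $\Psi_X$-blocks times a bounded factor from each excursion, and that the excursions are asymptotically negligible in density — shows that $B(\Phi) = B(\Psi_X) = t_0$; more carefully, one chooses the schedule so that the running pressure-type sums defining $B(\Phi)$ are squeezed between those for $\Psi_X$ restricted to increasingly long horizons, whose zero is $t_0$ by Bowen's formula for the autonomous system. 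Therefore $\HD(J_{\dense}(f)) \ge \HD(J(\Phi)) = t_0 = \HD(X)$, and taking the supremum over hyperbolic sets $X$ completes the proof.

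The main obstacle is the bookkeeping in the construction of $\Phi$: one must simultaneously (i) guarantee density of orbits by scheduling the excursions to approach every point of a fixed countable dense subset of $J(f)$ infinitely often, (ii) keep all the relevant inverse branches well-defined with uniformly bounded distortion along the whole itinerary (which is where forward-invariance and the hyperbolicity of $X$, together with expansivity of $f$ away from the postsingular set, are essential), and (iii) arrange the growth of the $\Psi_X$-block lengths delicately enough that the excursions neither destroy the sub-exponential growth condition nor perturb the Bowen dimension away from $t_0$. Once these are set up, the dimension computation is a routine application of Theorem~\ref{thm:bowensubexponential} together with Bowen's formula for the autonomous repeller.
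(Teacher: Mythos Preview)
Your approach is essentially the same as the paper's: both fix a hyperbolic repeller $X$ (with associated finite autonomous IFS $\Psi$) approximating $\hypdim(f)$, build a non-autonomous system $\Phi$ by using $\Psi$ at most times and inserting single ``excursion'' maps (inverse branches of iterates of $f$ that visit a dense sequence in $J(f)$ and return to $X$) at a sparse sequence of times $n_k$, observe that $J(\Phi)\subset J_{\dense}(f)$, and conclude via Bowen's formula for $\Phi$ together with $B(\Phi)=B(\Psi)$ when $(n_k)$ grows fast enough.

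Two minor points where the paper is slightly cleaner than your sketch. First, since each excursion step uses a \emph{single} map, one has $\#I^{(n)}\le \#I$ for all $n$, so $\Phi$ is actually uniformly finite; you only need sub-exponential growth, but the stronger observation simplifies the discussion. Second, the existence and well-definedness of the excursion maps (your point (ii)) is handled in the paper by invoking two separate ingredients: an inverse branch $\psi_{z_k,\eps_k}:U\to B(z_k,\eps_k)$ coming from the construction in \cite{hypdim}, composed with an inverse branch $\theta_k$ taking $B(z_k,\eps_k)$ back into $\interior(X)$, the latter existing by the blowing-up property of the Julia set (after avoiding the countable set of critical orbits). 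Your description of the excursions is correct in spirit but would benefit from making this two-step structure explicit, since that is precisely what guarantees the maps land back in the domain of $\Psi$ with uniform distortion control.
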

 (We also prove a version of this result for autonomous infinite iterated
   function systems.)

\subsection*{Previous and related results}
 Non-autonomous systems and Moran-set constructions have been studied by a variety of 
  authors previously, and we cannot give a survey here; instead we 
  shall mention a small number of papers that are particularly relevant to our
  study. 

The article \cite{moransetsclasses} from 2001 presents 
  a survey of results known at the time. Zhi-Ying Wen has kindly pointed out
  that, in the case where $d=1$ and all maps $\phi_i^{(n)}$ are affine similarities, a slightly
 stronger result than the positive part of
 Theorem \ref{thm:bowensubexponential} appears in 
 \cite[Theorem 2]{four_authors}. However, the proof appears to use the one-dimensionality
 of the phase space in an essential way.

We would also like to mention recent results independently obtained by
  Holland and Zhang \cite{holland}. Similarly to us,
  they consider a type of Moran set construction in $\R^d$
  and establish Bowen's formula (as well as formulae
  for the packing dimension) in certain cases. However, while there is some
  overlap
  between the results,  \cite{holland} takes a different route from the present article. 
  In particular, the results there 
  assume some control over the minimal contraction factors, and in particular 
  do not imply our theorems stated in the introduction.

\subsection*{Structure of the paper}
  Section~\ref{sec:definitions} contains the definitions that are fundamental
   for our study: non-autonomous systems, words, limit sets, lower pressure and
   Bowen dimension. In Section \ref{LBHD} we establish a general
   lower bound on the Hausdorff dimension of the limit set of a finite
   non-autonomous conformal iterated functions system, which is then used
   in Section~\ref{sec:balance} to prove preliminary versions of Theorems~
   \ref{thm:bowensubexponential} and \ref{thm:bowenexponential} above. 
   The proof of 
   both theorems is completed in Sections~\ref{sec:approximation} and
   \ref{sec:counterexamples} by considering suitable subsystems for the
   positive direction, and by
   discussing the construction of 
   counterexamples to Bowen's formula that establish the optimality of our 
   results. Completing our treatment of finite systems, we briefly discuss
   Hausdorff and packing measures of limit sets for a class of uniformly
   finite systems in Section \ref{hpm}.

  In Section~\ref{scia}, we use the approximation by subsystems developed
   in Section~\ref{sec:approximation} to prove Bowen's formula for
   some large classes 
   of infinite systems. Random iterated function systems are discussed
   in Section~\ref{sec:rcifs}, 
   and continuity of pressure and Hausdorff dimension
   is treated in Section~\ref{continuity}.

 We discuss the above-mentioned application to transitive points
   in complex dynamics and
   in infinite iterated function systems in an Appendix. 

\subsection*{Acknowledgements}
We would like to thank Mark Holland, Thomas Jordan and Zhi-Ying Wen for interesting discussions
  about this work. We are also grateful to the referee for their careful reading of the paper
  and a multitude of corrections and helpful
  suggestions.

\section{Definitions and First Estimates} \label{sec:definitions}

 In this section, we define the class of non-autonomous conformal
 iterated function systems, which we shall abbreviate as NCIFS, in a
 Euclidean space $\R^d$. Our results will be stated and proved in this setting.
 We essentially follow the notation for infinite iterated function systems
 in \cite{mauldinurbanskiGDMS}. We also define the upper and lower pressure
  functions for such systems, and derive some of their basic general
  properties. To conclude the section, 
  we establish an elementary upper bound for the Hausdorff
  dimension of the limit set of any non-autonomous conformal
 iterated function system. This is the easier part of Bowen's formula.

Throughout the article, we fix $d\in\N$ and
a compact set $X\subset\R^d$ with $\overline{\interior(X)}=X$, with
the additional geometric assumption that
  $\partial X$ is smooth or that $X$ is convex. (More generally, we can allow
  sets that satisfy the more technical condition (2.7) from
  \cite{mulondon}.) 

Given a conformal map $\phi:X\to X$ we denote by $\phi_i'(x)$ or by
$D\phi_i(x)$ the derivative of $\phi$ evaluated at $x$,
i.e. $\phi_i'(x):\R^d\to\R^d$ is a similarity linear map, and we
denote by $|\phi_i'(x)|$ (or by $|D\phi_i(x)|$) its scaling factor. We
also put
$$
||D\phi||=||\phi'||=\sup\{|\phi'(x)|:x\in X\}.
$$
  
 \begin{defn} \label{defn:ncifs}
  A \emph{non-autonomous conformal iterated function system (NCIFS)} $\Phi$ on the set $X$ 
   is
   given by a sequence $\Phi^{(1)},\Phi^{(2)},\Phi^{(3)},\dots$, where each
   $\Phi^{(j)}$ is 
   a set of functions $(\phi^{(j)}_i:X\to X)_{i\in I^{(j)}}$ and each
   $I^{(j)}$ is a (finite or countably infinite) 
   index set, if the following hold. 
  \begin{enumerate}
   \item \emph{Open set condition}: We have
$$
\phi^{(j)}_a(\interior(X))\cap\phi^{(j)}_b(\interior(X))=\emptyset
$$ 
for all $j\in \N$ and all distinct indices $a,b\in I^{(j)}$. 
  
\sp \item \emph{Conformality}: There exists an open connected set
  $V\supset X$ (independent of $i$ and $j$) 
   such that each $\phi^{(j)}_i$ extends to a $C^1$
   conformal diffeomorphism of $V$ into $V$.

\sp \item \emph{Bounded distortion}: There exists a constant $K\ge 1$ such that,
    for any $k\leq l$ and any $\om_k,\om_{k+1},\dots,\om_l$ with
    $\om_j\in I^{(j)}$, the map $\phi := \phi_{\om_k}\circ\dots\circ
    \phi_{\om_l}$ satisfies 
$$
\|D\phi(x)\|\leq K\|D\phi(y)\|
$$ 
for all $x,y\in V$. 

\sp \item \emph{Uniform contraction}: There is a constant $\eta<1$ such that 
$$
\|D\phi(x)\|\leq \eta^m
$$ 
for all sufficiently large $m$, all $x\in X$ and all
    $\phi = \phi_{\om_{j}}\circ\dots\circ \phi_{\om_{j+m}}$, where
     $j\ge 1$ and $\om_k \in I^{(k)}$. In particular, this holds if 
$$
\|D\phi_i^{(j)}(x)\|\leq \eta
$$ 
for all $j\ge 1$ and all $x\in X$, which we assume in the sequel for
the ease of exposition. 
\end{enumerate}

The system $\Phi$ is called
\emph{autonomous} if $I^{(n)}$ and $\Phi^{(n)}$ are independent of $n$.
\end{defn}

\begin{remark}
 We 
  remark that condition (c) (bounded distortion) is automatically
  satisfied when $d\geq 2$. Indeed, for $d\ge 3$ this
  condition can be deduced from the celebrated Liouville Theorem
  asserting that 
  each conformal map is a composition of an inversion with respect
  to some sphere (perhaps of infinite radius), a Euclidean linear
  similarity and a translation. In the case when $d=2$ any conformal
  map is either holomorphic or anti-holomorphic and condition (c) can
  be easily deduced from the celebrated Koebe Distortion
  Theorem. 

  Hence condition (c) only needs to be 
  verified in the case $d=1$. Of course bounded distortion is always
  satisfied (with $K=1$) if the system consists of similarities. 
\end{remark}

 For the definitions and discussions that follow, we fix some
  nonautonomous conformal IFS $\Phi$. The combinatorial
  language we introduce will be used throughout the article.

 \begin{defn}[Words]
  We define symbolic spaces, for $0< m \leq n <\infty$ by
   \[
    I^{n} := \prod_{j=1}^n I^{(j)}, \quad I^{\infty} := \prod_{j=1}^{\infty} I^{(j)},
      \quad
      I^{m,n} := \prod_{j=m}^n I^{(j)}\quad\text{and}\quad 
               I^{m,\infty} := \prod_{j=m}^{\infty} I^{(m)}. \]
  Elements of $I^{n}$ (with $n\leq \infty$) are called 
    \emph{(initial) words}, while those of
    $I^{m,n}$ with $m>1$ are called \emph{non-initial words}. We assume all words
    are initial, unless explicitly stated otherwise.
    
  The \emph{length} of a word $\omega\in I^{m,n}$ is $|\omega| := n-m$;
   $\omega$ is called 
    \emph{finite} or \emph{infinite} according to whether its length is finite
    or infinite. Finite words are sometimes also referred to as \emph{blocks}.

 If $\om=\om_m \om_{m+1}\dots \om_n\in I^{m,n}$ is a finite word, 
   we define the associated conformal
   map by
\[  \phi^{m,n}_{\om}:= \phi^{(m)}_{\om_{m}}\circ\dots\circ \phi^{(n)}_{\om_{n}}. \]
   In the case of an initial word, where $m=1$, we also abbreviate
     $\phi_{\om} := \phi_{\om}^n:= \phi_{\om}^{1,n}$. 
 \end{defn}
\begin{remark}
  We note the similarity of notation between the individual index sets
   $I^{(n)}$ and the $n$-fold product $I^n$. No confusion should arise,
   as it will always be clear from context whether we mean symbols or words. 
\end{remark}

We can now define the central object of study:
  the \emph{limit set} of a nonautonomous IFS.
\begin{defn}[Limit set]
 For all $n\in\N$ and $\om\in I^{n}$, we define
\[     X_{\om} := \phi^n_{\om}(X) \quad\text{and}\quad
     X_n :=\bigcup_{\omega\in I^{n}} X_{\om}. \]
  The \emph{limit set} (or attractor) of $\Phi$ is defined as 
\[ J := J(\Phi) := \bigcap_{n=1}^{\infty} X_n. \] 
\end{defn}
\begin{remark}
 In the case where all sets $I^{(j)}$, $j\ge 1$, are finite, the
  limit set $J(\Phi)$ is compact, and hence closed,
   as an intersection of compact sets. 
   If some of the sets $I^{(j)}$ are infinite, the
   limit set $J(\Phi)$ is not closed in general, and its closure can be
   much bigger than $J(\Phi)$---both topologically and
    with respect to Hausdorff
   dimension. Indeed, it is not difficult to construct infinite
   autonomous
   systems such that $J(\Phi)$ is dense in $X$ but has Hausdorff dimension
   equal to zero.
\end{remark}

There is another useful description of the limit set. If
$\om\in I^{m,n}$ (where $n$ is either
finite or infinite) and $s\le |\om|$, then by $\om|_s$ we mean the word
$$
\om_{m}\om_{m+1}\dots\om_{m+s-1} \in I^{m,m+s-1},
$$
i.e. the initial subword of $\om$ of length $s$. If $\om\in I^{\infty}$, 
 then $\(\phi_{\om|_n}(X)\)_{n=1}^\infty$ is
 a descending sequence of compact sets whose diameters converge to zero
 exponentially fast. Indeed,
  the uniform contraction assumption means that
\beq \label{eqn:contraction}
 \|\phi_{\om}'\| \leq \eta^n
\eeq
 and hence 
\beq\label{na120111104}
\diam\(\phi_{\om}(X)\)\le \const\eta^n
\eeq
for sufficiently large $n$ and all $\om\in I^n$.

\begin{deflem}[The projection map]\label{deflem:projection}
 Define   
\[
\pi_\Phi:I^{\infty}\to J(\Phi); \quad
  \{\pi_{\Phi}(\om)\} :=\bi_{n=1}^\infty\phi_{\om|_n}(X). \]
 Then 
$$
J(\Phi)=\pi_\Phi\left(I^{\infty}\right).
$$
\end{deflem}
\begin{proof}
  As noted above, the diameter of $\phi_{\om|_n}(X)$ tends to zero as
   $n\to\infty$, hence $\pi_\Phi$ is indeed a well-defined function,
   whose values belong to $J(\Phi)$ by definition. 

  To prove the converse, let $x\in J(\Phi)$. We use the fact that, 
   for every $n\in\N$, the set
      \[ \{\omega \in I^n: x\in X_{\omega}\} \]
   is finite. This is trivial when each index set is finite, but otherwise
   it uses the open set condition together with the fact that~-- due to the
   geometric assumptions on the boundary of $X$~-- each set
    $X_{\omega}$ with $x\in X_{\omega}$ contains a cone of definite 
    opening angle based at $x$ (see \cite[Formula (2.10)]{mulondon}). 
%%%LRG2014

  Now form a directed graph on the set of finite initial words $\omega$
    with $x\in X_{\omega}$, by drawing a directed edge from $\omega^1$ to
    $\omega^2$ if and only if $\omega^1$ is obtained from  $\omega^2$ by
    deleting the last symbol. By assumption,
    this graph contains arbitrarily long directed paths that start at the
    empty word, and by the above, the degree of each vertex is finite.
    Hence, by K\"onig's lemma from graph theory, there is an infinite
    word $\omega$ such that $x\in X_{\omega|_n}$ for all $n\geq 1$. 
\end{proof}
\begin{remark}
In view of \eqref{na120111104}, the map $\pi_{\Phi}$
 is H\"older continuous if we endow $I^{\infty}$
 with any of the standard metrics $\rho_\a$, $\a>0$, where
$$
\rho_\a(\om,\tau)=\exp\(-a\cdot |\om\wedge\tau|\)
$$
and $\om\wedge\tau$ is the longest common initial segent of both $\om$
and $\tau$. 
\end{remark}

\begin{defn}[Upper and lower pressure]
 For any $t\ge 0$ and $n\in\N$, we define 
  \[ 
  Z_n(t) := Z_n^{\Phi}(t) := \sum_{w\in I^n} \|D\phi^n_{\om}\|^t, 
\]
 We now define the
\emph{upper and lower pressure functions} respectively as follows: 
   \begin{align*}
     \lP(t) &:= \lP^{\Phi}(t) := \liminf_{n\to\infty} \frac1n\log Z_n \quad\text{and}\\
     \uP(t) &:= \uP^{\Phi}(t) := \limsup_{n\to\infty}\frac1n\log Z_n.
   \end{align*} 
 Note that $\lP(t), \uP(t) \in [-\infty,+\infty]$. 
\end{defn}

 \begin{lem}[Monotonicity] \label{lem:monotonicity}
  The lower pressure function is strictly decreasing when it is finite. That is,
   if $t_1<t_2$, then either both $\lP(t_1)$ and $\lP(t_2)$ %%%LRG
  are equal
   to $+\infty$, both are 
   equal to $-\infty$, or $\lP(t_1)>\lP(t_2)$. The same is true of the
   upper pressure function. 
 \end{lem}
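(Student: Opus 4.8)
The plan is to work directly from the definition of the lower pressure function together with the bounded distortion property (condition (c) in Definition~\ref{defn:ncifs}). Fix $t_1<t_2$ and write $\delta := t_2-t_1>0$. The key elementary observation is that for every word $\om\in I^n$ we have $\|D\phi^n_{\om}\| \leq \eta^n$ for all large $n$ by the uniform contraction estimate \eqref{eqn:contraction} (and in any case $\|D\phi_\om^n\|\le 1$), so that
\[
  \|D\phi^n_{\om}\|^{t_2} = \|D\phi^n_{\om}\|^{t_1}\cdot\|D\phi^n_{\om}\|^{\delta} \leq \eta^{n\delta}\,\|D\phi^n_{\om}\|^{t_1}.
\]
Summing over $\om\in I^n$ gives $Z_n(t_2) \leq \eta^{n\delta} Z_n(t_1)$, hence
\[
  \frac1n\log Z_n(t_2) \leq \frac1n\log Z_n(t_1) + \delta\log\eta.
\]
Taking $\liminf_{n\to\infty}$ of both sides yields $\lP(t_2) \leq \lP(t_1) + \delta\log\eta$, and since $\log\eta<0$ this already shows $\lP(t_2)\le \lP(t_1)$, with the inequality being strict (by a definite amount $\delta|\log\eta|$) \emph{provided} the right-hand side is not of the form $-\infty$ or $+\infty$ in a way that makes the comparison vacuous.

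It then remains to sort out the cases involving $\pm\infty$. If $\lP(t_1)=-\infty$, then the displayed inequality forces $\lP(t_2)=-\infty$ as well. If $\lP(t_1)=+\infty$: here one has to argue separately that $\lP$ cannot jump from $+\infty$ down to a finite value. For this I would use monotonicity in the \emph{other} direction, namely the trivial bound $Z_n(t_1)\le Z_n(t_2)\cdot \sup_\om\|D\phi_\om^n\|^{t_1-t_2}$ — wait, that goes the wrong way since $t_1-t_2<0$; instead note simply that $\|D\phi_\om^n\|\le 1$ gives $\|D\phi_\om^n\|^{t_2}\le\|D\phi_\om^n\|^{t_1}$ termwise, so $Z_n(t_2)\le Z_n(t_1)$ always, hence $\lP(t_2)\le\lP(t_1)$ unconditionally; combined with the quantitative gap above, one concludes that the three listed alternatives (both $+\infty$, both $-\infty$, or $\lP(t_1)>\lP(t_2)$) are exhausted. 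Concretely: if $\lP(t_2)=+\infty$ then $\lP(t_1)=+\infty$ by $Z_n(t_2)\le Z_n(t_1)$; if $\lP(t_2)$ is finite and $\lP(t_1)=+\infty$ we are in no contradiction yet — so the genuinely delicate point is ruling this out.

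To rule it out I would exploit the \emph{uniform} structure: by bounded distortion and uniform contraction, for any $k$ the block map over levels $n+1,\dots,n+k$ has norm at most $\eta^k$, so $Z_{n+k}(t_1) \le K^{t_1}\, Z_n(t_1)\cdot Z_{n+1,n+k}(t_1)$ where $Z_{m,n}$ is the analogous sum over non-initial words; but more simply, one can bound $Z_n(t_1) = \sum_\om \|D\phi_\om^n\|^{t_1}$ and, writing $\|D\phi_\om^n\|^{t_1} = \|D\phi_\om^n\|^{t_2}\cdot\|D\phi_\om^n\|^{-\delta}$ and using $\|D\phi_\om^n\|\ge$ (some lower bound only if the alphabet is finite!). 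In the infinite-alphabet case there need be no such lower bound, which is the honest obstacle. The clean way around it, which I expect is what the authors do, is to observe that the statement only claims that $\lP$ is strictly decreasing \emph{where finite}, so the hypothesis of the ``strictly decreasing'' clause is that $\lP(t_1),\lP(t_2)$ are both finite; then the quantitative inequality $\lP(t_2)\le\lP(t_1)+\delta\log\eta<\lP(t_1)$ does the job immediately, and the trichotomy is obtained by noting that $Z_n(t_2)\le Z_n(t_1)$ forces $\lP(t_1)=+\infty$ whenever $\lP(t_2)=+\infty$, and the first displayed inequality forces $\lP(t_2)=-\infty$ whenever $\lP(t_1)=-\infty$. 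The identical argument with $\limsup$ in place of $\liminf$ handles $\uP$. Thus the main (and only) obstacle is the bookkeeping of the infinite cases; the core estimate is a one-line application of Hölder-type splitting of the derivative norm, using only $\|D\phi_\om^n\|\le\eta^n$.
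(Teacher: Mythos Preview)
Your core estimate is exactly the paper's: from $\|D\phi_\om^n\|\le\eta^n$ one gets $Z_n(t_2)\le\eta^{n\delta}Z_n(t_1)$ and hence $\lP(t_2)\le\lP(t_1)+\delta\log\eta$. The paper stops there, and so can you. Two points of overcomplication:

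First, bounded distortion plays no role whatsoever; only uniform contraction is used. Your opening sentence is misleading in this respect.

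Second, your extended discussion of the case ``$\lP(t_1)=+\infty$ and $\lP(t_2)$ finite'' is unnecessary, because that case does \emph{not} need to be ruled out: it falls under the third alternative $\lP(t_1)>\lP(t_2)$, read as an inequality in the extended reals. The single inequality $\lP(t_2)\le\lP(t_1)+\delta\log\eta$ already yields the full trichotomy directly: if $\lP(t_1)=-\infty$ then $\lP(t_2)=-\infty$; if $\lP(t_2)=+\infty$ then $\lP(t_1)=+\infty$; in every remaining case $\lP(t_1)>\lP(t_2)$ (with a gap of at least $\delta|\log\eta|$ when both are finite). So the paragraph beginning ``To rule it out\ldots'' and the attempted lower bound on $\|D\phi_\om^n\|$ can be deleted entirely.
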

 \begin{proof}
  Let $t\geq 0$ and $\eps>0$. Then  
    \[ Z_n(t+\eps) =
          \sum_{w\in I^n}\| D\phi^n_{\om}\|^{t+\eps} \leq
           \sum_{w\in I^n} \eta^{n\eps} \cdot \|D\phi^n_{\om}\|^t 
            =\eta^{n\eps}\cdot Z_n(t), \]
   where $\eta<1$ is the uniform contraction constant. 
    So $\lP(t+\eps)\leq \lP(t)-\eps\cdot \log(1/\eta)$. Hence,
    $\lP(t+\eps)\leq \lP(t)$, and the inequality is strict if 
    $\lP(t)$ is finite.
    
 The proof
   for the upper pressure function $\uP$ is analogous. 
 \end{proof}

We are now ready to define the \emph{Bowen dimension}, our candidate for the
  Hausdorff dimension of $J(\Phi)$.

\begin{defn}[Bowen dimension and Bowen's formula]\label{defn:bowen}
 Let $\Phi$ be a nonautonomous conformal iterated function system.
   We define
  \[ h:=h(\Phi)=\HD(J(\Phi)). \]

  The quantity
  \begin{align*}
   B := B(\Phi) &:= \sup\{t\geq 0: \lP(t)> 0\} = \inf\{t\geq 0: \lP(t)< 0 \} \\
    &= \sup\{t\geq 0:Z_n(t)\to \infty \}.
  \end{align*}
 is called  the
 \emph{Bowen dimension} of the system $\Phi$. If the equality
    \begin{equation}
       h(\Phi) =B(\phi)
    \end{equation}
  is true, we say that \emph{Bowen's formula} holds for $\Phi$.
\end{defn}
 \begin{remark}
   The equalities stated in the definition of $B(\Phi)$
  all follow from Lemma
 \ref{lem:monotonicity}. Note that 
   in the rather uninteresting case where $\lP(0)=0$, the value of the 
   two suprema  
   should be taken to be zero by definition. Also note 
   that $\lP(t)<0$ for all $t>d$, as a consequence of the open set
   condition. Hence $B(\Phi)\leq d$.  
 \end{remark}

 The Bowen dimension is precisely the value that we obtain when restricting the
  covers in the definition of Hausdorff dimension to the ``natural'' covers
  \[ \mathcal{V}_n := \{X_{\omega}: \omega \in I^n\}. \]
 In particular, 
  it is elementary that half of Bowen's formula is always satisfied:

 \begin{lem}\label{lem:HDupperBound}
  $h(\Phi) \leq B(\Phi)$.
 \end{lem}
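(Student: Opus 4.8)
The plan is to show that the natural covers $\mathcal{V}_n = \{X_\omega : \omega \in I^n\}$ witness that $\HD(J(\Phi)) \leq t$ whenever $t > B(\Phi)$, which gives $h(\Phi) \leq B(\Phi)$ upon letting $t \downarrow B(\Phi)$. So fix $t > B(\Phi)$. By Definition~\ref{defn:bowen} together with Lemma~\ref{lem:monotonicity}, we have $\lP(t) < 0$; more precisely, $Z_n(t) \not\to \infty$, and in fact monotonicity of the lower pressure lets us assume $\lP(t) < 0$, so that $Z_n(t) \to 0$ along a subsequence, and by the argument in Lemma~\ref{lem:monotonicity} (applied with a slightly smaller exponent $t' \in (B(\Phi), t)$ so that $\lP(t') < 0$ and $Z_n(t) \leq \eta^{n(t-t')} Z_n(t')$) we even get $Z_n(t) \to 0$ as $n \to \infty$. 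In particular $Z_n(t)$ is bounded.

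Next I would translate this into a bound on the Hausdorff content. For each $n$, the family $\mathcal{V}_n$ covers $J(\Phi)$: indeed $J(\Phi) \subset X_n = \bigcup_{\omega \in I^n} X_\omega$ by the definition of the limit set. By the bounded distortion property (Definition~\ref{defn:ncifs}(c)) and conformality, $\diam(X_\omega) = \diam(\phi^n_\omega(X)) \leq \const \cdot \|D\phi^n_\omega\|$ for every $\omega \in I^n$, where the constant depends only on $\diam(X)$ and $K$. Hence
\[
 \sum_{\omega \in I^n} \diam(X_\omega)^t \leq \const^t \sum_{\omega \in I^n} \|D\phi^n_\omega\|^t = \const^t \cdot Z_n(t).
\]
By \eqref{na120111104} (or \eqref{eqn:contraction}) the diameters $\diam(X_\omega)$ for $\omega \in I^n$ are all at most $\const \cdot \eta^n \to 0$, so $\mathcal{V}_n$ is an admissible cover for computing the $t$-dimensional Hausdorff measure in the limit $n \to \infty$. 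Therefore
\[
 \mathcal{H}^t(J(\Phi)) \leq \liminf_{n\to\infty} \sum_{\omega \in I^n} \diam(X_\omega)^t \leq \const^t \cdot \liminf_{n\to\infty} Z_n(t) = 0,
\]
using that $Z_n(t) \to 0$. Consequently $\HD(J(\Phi)) \leq t$, and since $t > B(\Phi)$ was arbitrary, $h(\Phi) = \HD(J(\Phi)) \leq B(\Phi)$.

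The only genuinely delicate point is the passage from ``$t > B(\Phi)$'' to ``$Z_n(t)$ is bounded (indeed $\to 0$)''; this is a routine consequence of the monotonicity estimate in Lemma~\ref{lem:monotonicity}, which shows $Z_{n}(t) \le \eta^{n\eps \log(1/\eta)\cdot(\text{const})}\,$—more carefully, picking $t'$ with $B(\Phi) < t' < t$ gives $\lP(t') \le 0$ and $Z_n(t) \le \eta^{n(t-t')}Z_n(t')$, and since $\frac1n\log Z_n(t') $ has nonpositive liminf while $\eta^{n(t-t')} \to 0$ geometrically, the product tends to $0$. Everything else is the standard ``natural covers bound Hausdorff dimension'' argument and presents no obstacle; one should just be slightly careful that if $B(\Phi) = d$ the statement is vacuous since $\HD(J(\Phi)) \le d$ trivially, and if $\lP(0) = 0$ the convention $B(\Phi) = 0$ is consistent with $J(\Phi)$ possibly being a point or empty.
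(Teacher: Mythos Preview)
Your proof is correct and follows essentially the same approach as the paper: cover $J(\Phi)$ by the natural level-$n$ pieces $\{X_\omega:\omega\in I^n\}$, bound the Hausdorff sum by a constant times $Z_n(t)$, and use that $\liminf_{n\to\infty} Z_n(t)=0$ for $t>B(\Phi)$. The paper packages the covering step as a slightly more general lemma (Lemma~\ref{lem:HDuppergeneral}, needed again in Section~\ref{sec:counterexamples}) and then deduces the result in one line. One minor expository wobble: your first paragraph tries to show that $Z_n(t)\to 0$ as a full limit, which neither follows from your $t'$-argument (since $\lP(t')\le 0$ does not bound $Z_n(t')$ for \emph{all} $n$) nor is needed---your final display correctly uses only $\liminf Z_n(t)=0$, and that is immediate from $\lP(t)<0$.
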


 We shall state and prove a more general technical statement 
  (without increasing the
  difficulty of the proof), which will come for the discussion of 
  counterexamples to Bowen's formula.

 \begin{lem}\label{lem:HDuppergeneral}
   Let $\Phi$ be a NCIFS. For each $n\geq 0$, let $\mathcal{U}_n$ be a
    covering
    of the limit set of the system $\Psi_n$ defined by 
    $\Psi_n^{(j)}:=\Phi^{(n+j-1)}$, and denote its Hausdorff sum by
      \[ S(\mathcal{U}_n,t) := \sum_{U\in \mathcal{U}_n} \diam(U)^t. \]
    Suppose that $t\geq 0$ is such that
     $\liminf_{n\to\infty} Z_{n-1}(t)\cdot S(\mathcal{U}_{n},t)<\infty$. 

   Then $h(\Phi)\leq t$.
 \end{lem}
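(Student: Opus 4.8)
The plan is to build an explicit covering of $J(\Phi)$, for each level $n$, by composing the level-$n$ words of $\Phi$ with the covering $\mathcal{U}_n$ of the limit set of the shifted system $\Psi_n$, and then to estimate the resulting Hausdorff sum using bounded distortion. More precisely, first I would observe that every point of $J(\Phi)=\pi_\Phi(I^\infty)$ lies in some $X_\omega=\phi^{n-1}_\omega(X)$ with $\omega\in I^{n-1}$, and moreover its ``tail'' is a point of $J(\Psi_n)$ (the limit set of $\Phi^{(n)},\Phi^{(n+1)},\dots$). Hence, for each $U\in\mathcal{U}_n$ and each $\omega\in I^{n-1}$, the set $\phi^{n-1}_\omega(U)$ (or rather $\phi^{n-1}_\omega(\widetilde U)$ for a slightly enlarged $\widetilde U\subset V$, to make sense of applying the conformal extension) is a candidate cover element, and the collection
\[ \mathcal{W}_n := \{\, \phi^{n-1}_\omega(U) : \omega\in I^{n-1},\ U\in\mathcal{U}_n \,\} \]
is a covering of $J(\Phi)$.

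Next I would bound the Hausdorff sum $S(\mathcal{W}_n,t)=\sum_{\omega,U}\diam(\phi^{n-1}_\omega(U))^t$. By the bounded distortion property (Definition~\ref{defn:ncifs}(c)), $\diam(\phi^{n-1}_\omega(U))\le K\,\|D\phi^{n-1}_\omega\|\cdot\diam(U)$ up to a fixed geometric constant coming from passing between $X$ and $V$; this is the only nontrivial estimate and it is where one must be slightly careful that the enlargement of the $U$'s is harmless (it at worst changes diameters by a bounded factor, and the conformality hypothesis gives a single $V$ working for all $\omega$). Summing, one gets
\[ S(\mathcal{W}_n,t) \le C^t \sum_{\omega\in I^{n-1}}\|D\phi^{n-1}_\omega\|^t \cdot \sum_{U\in\mathcal{U}_n}\diam(U)^t = C^t\, Z_{n-1}(t)\cdot S(\mathcal{U}_n,t), \]
for a constant $C$ depending only on $K$ and the geometry of $X\subset V$. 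The mesh of $\mathcal{W}_n$ tends to zero as $n\to\infty$: by the uniform contraction estimate \eqref{eqn:contraction} we have $\|D\phi^{n-1}_\omega\|\le\eta^{n-1}$, so each $\phi^{n-1}_\omega(U)$ has diameter at most $\const\cdot\eta^{n-1}$ (using that the $U$'s have uniformly bounded diameter, since $\mathcal{U}_n$ covers a subset of $X$).

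Finally I would conclude: under the hypothesis $\liminf_{n\to\infty} Z_{n-1}(t)\cdot S(\mathcal{U}_n,t)<\infty$, there is a sequence $n_k\to\infty$ along which $S(\mathcal{W}_{n_k},t)\le C^t\cdot Z_{n_k-1}(t)\cdot S(\mathcal{U}_{n_k},t)$ stays bounded. Since the mesh of $\mathcal{W}_{n_k}$ tends to $0$, the definition of Hausdorff measure gives $\mathcal{H}^t(J(\Phi))<\infty$, hence $h(\Phi)=\HD(J(\Phi))\le t$. Taking $\mathcal{U}_n=\mathcal{V}_1(\Psi_n)=\{X_i^{(n)}:i\in I^{(n)}\}$ — or even just $\mathcal{U}_n=\{X\}$ if one wants the crude bound — recovers Lemma~\ref{lem:HDupperBound} as the special case, since then $S(\mathcal{U}_n,t)$ is bounded and $Z_{n-1}(t)\not\to\infty$ exactly when $t\ge B(\Phi)$. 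The main obstacle, such as it is, is purely bookkeeping: making precise the passage from a covering of the abstract limit set $J(\Psi_n)$ to actual subsets of $V$ on which the compositions $\phi^{n-1}_\omega$ are defined and distortion-controlled; once that is set up, the estimate is a one-line application of bounded distortion and the definition of $Z_{n-1}$.
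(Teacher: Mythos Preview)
Your proposal is correct and follows essentially the same approach as the paper: form the covering $\mathcal{V}_n=\{\phi_{\omega}(U):\omega\in I^{n-1},\,U\in\mathcal{U}_n\}$, bound $\diam(\phi_\omega(U))\le C\|D\phi_\omega\|\diam(U)$ via bounded distortion, sum to get $S(\mathcal{V}_n,t)\le C\cdot Z_{n-1}(t)\cdot S(\mathcal{U}_n,t)$, and conclude $\mathcal{H}^t(J(\Phi))<\infty$. You are somewhat more explicit than the paper about why $\mathcal{V}_n$ really covers $J(\Phi)$ and about the domain issue for $\phi_\omega(U)$, but the argument is the same.
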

\begin{remark}
  In our applications, $\mathcal{U}_n$ will usually be a covering
   of $\bigcup_{a\in I^{(n)}} \phi_a^{(n)}(X)\supset J(\Psi_n)$. 
\end{remark}
\begin{proof}
  Consider the covering
    \[ 
     \mathcal{V}_n := \{\phi_{\om}(U):\om\in I^{n-1}, U\in \mathcal{U}_n\} 
    \]
  of $J(\Phi)$. Note that the diameters of the sets in $\mathcal{V}_n$ tend
   to zero as $n\to\infty$ by the uniform contraction assumption on $\Phi$.

  Each set in this covering
   satisfies
    \[ 
        \diam(\phi_{\om}(U)) \leq C\cdot \|D\phi_{\om}\|\cdot \diam(U), 
    \]
   where $C$ is a constant depending only on the geometry of $X$%%%
   \footnote{If $X$ is convex, then
     $C=1$.}.
   It follows that
    \[ S(\mathcal{V}_n,t) \leq 
          C\cdot\left(\sum_{\om\in I^{n-1}}\|D\phi_{\om}\|^t\right)\cdot
             \left(\sum_{U\in\mathcal{U}_n}\diam(U)^t\right) =
         C\cdot Z_{n-1}(t) \cdot S(\mathcal{U}_n,t). \]
   Hence the assumption implies that the
    $t$-dimensional Hausdorff measure of $J(\Phi)$ is finite. 
    In particular, $h(\Phi)\leq t$, as claimed.
\end{proof}
 
 \begin{proof}[Proof of Lemma \ref{lem:HDupperBound}]
   Let $t>B(\Phi)$, and let $\U_n=\{X\}$ for all $n$. We have
     \[ \liminf_{n\to\infty}  Z_n(t)\cdot S(\U_{n+1},t) = 
          \diam(X)^t\cdot \liminf_{n\to\infty} Z_n(t) = 0 \]
   by definition of $B(\Phi)$. Hence $h(\Phi)\leq t$ by the preceding lemma.
   Since $t>B(\Phi)$ was arbitrary, the claim follows.
 \end{proof}

 To conclude the section, we remark that one 
   cannot expect Bowen's formula $B(\Phi) = J(\Phi)$ to hold without
  imposing any conditions on the non-autonomous system. Indeed, suppose that
  we are given a system $\Phi$ for which the upper pressure is strictly positive
  on the interval $[0,d)$, while the lower pressure is strictly negative on
  $(0,d]$. 
%%%LRG
%%% Such a system is easy to construct.

(For example, let $\Phi_1$ be an autonomous system whose limit set has Hausdorff dimension $d$, such as the 
  subdivision of the unit cube in $\R^d$ into $2^d$ cubes of side-length $1/2$, and let $\Phi_2$ be an autonomous system
  consisting of a single contraction, and whose limit set is hence a single point. If $0=N_0<N_1<\dots $ is a sufficiently rapidly increasing sequence, then 
   the system 
   \[ \Phi^{(k)} := \begin{cases}
                                \Phi_1  & \text{if } N_{2i} < k \leq N_{2i+1} \text{ for some $i\geq 0$} \\
                                \Phi_2 & \text{if } N_{2i+1} < k \leq N_{2i+2} \text{ for some $i\geq 0$} \end{cases}
     \]
    has the desired properties.)

  By Lemma \ref{lem:HDupperBound}, 
   the limit set $J(\Phi)$ has Hausdorff dimension zero. 
   Define $t_k := d - 1/k$; by assumption on the upper pressure,
   there is an increasing sequence $n_k$ such that
   \[ \liminf_{k\to\infty} \frac{1}{n_k}\log Z_{n_k}(d-1/k)\geq 0. \]
  We define a new system $\Psi$ by ``collapsing'' the levels between
  consecutive $n_k$. More precisely, $\Psi$ is given by
  index sets
    $J^{(k)} := I^{n_k,n_{k+1}-1}$ and mappings
    \[ \psi^{(k)}_{\om} := \phi^{n_k,n_{k+1}-1}_{\om} \quad
          (\omega\in J^{(k)}). \]
  Assuming without loss of generality that $n_1=1$, 
  this system has the same limit set
  $J(\Psi)=J(\Phi)$; so $h(\Psi)=h(\Phi)=0$. 
  However, the lower pressure $\lP^{\Psi}(t)$ is positive for
  every $t<d$, and hence $B(\Psi)=d$. 
  Further counterexamples to Bowen's formula will be studied in Section
  \ref{sec:counterexamples}.  

\section{A lower bound for finite systems}\label{LBHD}

In this section, we will prove a general lower bound on the Hausdorff
dimension of $J(\Phi)$ for finite systems. Let us
 begin by making the following  definition. 

\begin{defn} \label{defn:finiteness}
 A nonautonomous iterated function system is called \emph{finite} if each
alphabet $I^{(n)}$, $n\in\N$, is finite. It is called \emph{uniformly
finite} if there is a number $q\in\N$ such that $\# I^{(n)}\leq q$ for
all $n\in\N$, and 
  \emph{subexponentially bounded} if 
\[ 
\lim_{n\to\infty} \frac1n\log\# I^{(n)} = 0. 
\]
\end{defn}
Bowen's formula is based on the notion that the coverings of $J(\Phi)$ by sets of level
  $j$ (letting $j\to\infty$) provide the best coverings in terms of Hausdorff measure. As noted
  at the end of the previous section, we cannot expect this to be true without imposing
  extra conditions. One of the reasons for this is that, at some stages, the images of the maps in $\Phi^{(n)}$ might be 
  grouped in such a way that a much more efficient covering is possible---imagine, in the one-dimensional case, that there is a very
  large number of tiny intervals, all of which are grouped together so that they can be covered by a single small interval. 
By the open set condition, the volume of 
     \[ \bigcup_{a\in I^{(n)}} \phi_a^{(n)}(X) \]
 is (up to a constant factor) at least 
    \[ \# I^{(n)}\cdot \min_{a\in I^{(n)}} \|D\phi_a^{(n)}\|^d. \]
 So, if we set
  \begin{equation} \label{eqn:defunc}
    \un c_n := \min_{a\in I^{(n)}} \|D\phi_a^{(n)}\|.    
  \end{equation}
  we have 
   \[ \diam\left(\bigcup_{a\in I^{(n)}} \phi_a^{(n)}(X)\right) \geq
        \const\cdot 
  \sqrt[d]{\#I^{(n)}} \cdot {\un c}_n \cdot \diam(X). 
   \] 
 In order to obtain a lower bound on the Hausdorff dimension, 
   it thus makes sense to replace $Z_n(t)$ by 
 \begin{equation} \label{eqn:Ztilde}
\tilde Z_n(t) 
:= Z_{n-1}(t) \cdot (\# I^{(n)})^{\frac{t}{d}}\cdot 
               {\un c}_n^t
\end{equation}
 in the definition of Bowen dimension.

  We might expect that $\hdim(J(\Phi))\geq t$ provided that $\liminf
  \Zl_n(t) > 0$. The following result shows that this is indeed the
  case, provided that different pieces at the same level do not have
  drastically different sizes, as measured by the quantity
 \begin{equation} \label{eqn:defnrho}
  \rho_n := \sup_{a,b\in I^{(n)}}
        \frac{\|D{\phi^{(n)}_a}\|}{\|D{\phi^{(n)}_b}\|} 
\geq 1. 
 \end{equation}

\begin{thm}[Lower bounds on Hausdorff dimension]\label{thm:lowerbound}
  Let $\Phi$ be a finite nonautonomous conformal iterated function
  system. If $t\geq 0$ is such that 
 \[ 
\liminf_{n \to\infty} \frac{\tilde Z_n(t)}{1+\log \max_{j\leq
    n}\rho_{j}} > 0, 
\]
then $\hdim(J(\Phi))\geq t$.
\end{thm}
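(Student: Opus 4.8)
The plan is to build a probability measure $\mu$ supported on $J(\Phi)$ and to apply the mass distribution principle (Frostman's lemma): if every set $U$ of sufficiently small diameter satisfies $\mu(U)\leq C\cdot\diam(U)^t$, then $\mathcal{H}^t(J(\Phi))>0$ and hence $\hdim(J(\Phi))\geq t$. The natural candidate for $\mu$ is a Bernoulli-type (but non-autonomous) measure on $I^\infty$ pushed forward by $\pi_\Phi$: at level $n$, given a cylinder $[\omega]$ with $\omega\in I^{n-1}$, we subdivide its mass among the children $\omega a$, $a\in I^{(n)}$, in proportion to $\|D\phi_a^{(n)}\|^t$, but \emph{capped}: since the $\#I^{(n)}$ pieces $\phi_a^{(n)}(\interior X)$ are disjoint subsets of $\phi_\omega(X)$ whose $d$-volumes sum to at most $\|D\phi_\omega\|^d\cdot\const$, the geometric content forces us to treat the two regimes $\|D\phi_a^{(n)}\|^t$ versus $(\#I^{(n)})^{-t/d}\cdot\un c_n^{\,t}$-sized; the quantity $\tilde Z_n(t)$ is exactly the total ``weight'' after accounting for this. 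Concretely, I would define weights $p_\omega^{(n)}$ on words of length $n$ recursively so that the measure of the cylinder $[\omega]$, $\omega\in I^n$, is comparable to $\|D\phi_\omega\|^t/\tilde Z_n(t)$ up to the correction factor $\prod_{j\le n}(1+\log\rho_j)$ coming from the within-level size disparity; the hypothesis $\liminf \tilde Z_n(t)/(1+\log\max_{j\le n}\rho_j)>0$ is precisely what keeps these cylinder masses from exceeding a constant multiple of $\diam(X_\omega)^t\asymp\|D\phi_\omega\|^t$.

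The main steps, in order, would be: (i) fix $t$ with $\delta:=\liminf \tilde Z_n(t)/(1+\log\max_{j\le n}\rho_j)>0$, pass to a tail so that $\tilde Z_n(t)\geq \delta\cdot(1+\log\max_{j\le n}\rho_j)$ for all $n\geq n_0$, and (by collapsing the first $n_0$ levels into a single finite alphabet, which changes $J(\Phi)$ only by a bi-Lipschitz map) assume $n_0=1$; (ii) construct the measure $\mu$ on $I^\infty$ level by level --- the key combinatorial estimate is that, for a single level $n$, $\sum_{a\in I^{(n)}}\min\bigl(\|D\phi_a^{(n)}\|^t,\ (\#I^{(n)})^{-t/d}\un c_n^{\,t}\bigr)$ is, up to the factor $1+\log\rho_n$, comparable to $(\#I^{(n)})^{t/d}\un c_n^{\,t}$ (a dyadic pigeonhole over the range of sizes $\|D\phi_a^{(n)}\|\in[\un c_n,\un c_n\rho_n]$ gives the $\log\rho_n$), so the recursive normalization is controlled; (iii) push $\mu$ forward to $\nu:=\mu\circ\pi_\Phi^{-1}$ on $J(\Phi)$ and verify the Frostman bound: for an arbitrary set $U$ with small diameter $r$, choose the level $n$ with $\const\eta^{n}\le r<\const\eta^{n-1}$ (roughly), observe via bounded distortion and the open set condition that $U$ can meet only boundedly many pieces $X_\omega$ with $\omega\in I^n$ and $\diam(X_\omega)\gtrsim r$, while pieces smaller than $r$ are handled by the cap in the definition of $\mu$ --- in both cases $\nu(U)\le C r^t$; (iv) conclude $\mathcal{H}^t(J(\Phi))\ge \nu(J(\Phi))/C>0$, hence $\hdim(J(\Phi))\ge t$.

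The hard part will be step (iii), the local mass estimate, and in particular handling the interaction between the two size-regimes at the ``critical'' level simultaneously with the geometry of $X$. A set $U$ of diameter $r$ may straddle the boundary between several large pieces at level $n$ and also contain many complete tiny pieces at that same level; one must show that the contribution of the tiny pieces is $\lesssim r^t$ using precisely the volume/packing bound $\#\{a: \phi_a^{(n)}(X)\subset B(x,r)\}\lesssim r^d/(\un c_n^{\,d})$ combined with the per-piece cap $(\#I^{(n)})^{-t/d}\un c_n^{\,t}$ on $\mu$-mass, which yields $\lesssim r^d\un c_n^{-d}\cdot(\#I^{(n)})^{-t/d}\un c_n^{\,t}\cdot(\text{normalization})$ and needs the lower bound on $\tilde Z_n(t)$ to absorb the normalization and the $\un c_n^{t-d}$ factor against $r^{t}$ (using $r\gtrsim \un c_n$ when a whole piece fits). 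The cone condition from \cite[Formula (2.10)]{mulondon} and bounded distortion are what make the ``boundedly many large pieces'' count work; keeping the constants uniform across all levels is the bookkeeping burden but presents no conceptual difficulty. A secondary subtlety is that $\un c_n$ and $\rho_n$ need not be monotone or bounded, so the telescoping product $\prod_{j\le n}(1+\log\rho_j)$ must be compared against $\tilde Z_n(t)$ only through the stated $\liminf$ hypothesis, never term-by-term.
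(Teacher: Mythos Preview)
Your overall strategy (build a mass distribution and apply Frostman) is correct, but the specific construction you describe does not work, and the place where you expect $\tilde Z_n(t)$ and $\log\rho_n$ to enter is wrong.

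First, the ``capped'' measure is degenerate. Since $\|D\phi_a^{(n)}\|\ge\un c_n$ and $(\#I^{(n)})^{-t/d}\le 1$, your cap $(\#I^{(n)})^{-t/d}\un c_n^{\,t}$ is \emph{always} below $\|D\phi_a^{(n)}\|^t$, so the minimum collapses to the cap for every $a$; there is no dyadic pigeonhole to perform, and the claimed combinatorial comparison in step (ii) is false as written. Second, and more seriously, you anticipate an error factor $\prod_{j\le n}(1+\log\rho_j)$ accumulated level by level. That product can grow far faster than $1+\log\max_{j\le n}\rho_j$, so the stated hypothesis would not absorb it; any scheme that loses a multiplicative $(1+\log\rho_j)$ at each level is doomed.

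The paper's argument avoids both problems by keeping the measure simple and letting $\tilde Z_n(t)$ and $\log\rho$ appear \emph{only} in the Frostman estimate, each exactly once. One takes the uncapped measures $m_q$ with $m_q(X_\omega)=\|D\phi_\omega\|^t/Z_{|\omega|}(t)$ (realised as Lebesgue on each $X_\omega$), and passes to a weak limit at the end. For a ball $B$ of radius $r$, one forms the set $W$ of ``stopping'' words $\omega$ with $\diam(X_\omega)\ge r$ but some child of diameter $\le r$. The minimal such words $W'$ number at most a geometric constant $M$ (cone condition). For each minimal $\tau\in W'$, the descendants of $\tau$ in $W$ can persist for at most $\const\cdot(1+\log\rho_{|\tau|+1})$ further levels, because the first child already has diameter $\le\const\cdot r\cdot\rho_{|\tau|+1}$ and each subsequent level contracts by $\eta$; this is the sole source of the $\log\rho$ factor. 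For a fixed stopping word $\omega$ with $n=|\omega|$, the children $X_{\omega a}$, $a\in A_\omega$, that meet $B$ and have diameter $\le r$ satisfy a volume bound $r^d\gtrsim\|D\phi_\omega\|^d\sum_{a\in A_\omega}\|D\phi_a^{(n+1)}\|^d$; combining this with H\"older's inequality
\[
\frac{\sum_{a\in A_\omega}\|D\phi_a^{(n+1)}\|^t}{\bigl(\sum_{a\in A_\omega}\|D\phi_a^{(n+1)}\|^d\bigr)^{t/d}}\le(\#I^{(n+1)})^{1-t/d}
\]
and with $Z_{n+1}(t)\gtrsim Z_n(t)\cdot\#I^{(n+1)}\cdot\un c_{n+1}^{\,t}$ yields $m_q\bigl(\bigcup_{a\in A_\omega}X_{\omega a}\bigr)\le\const\cdot r^t/\tilde Z_{n+1}(t)$. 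Summing over $W=\bigcup_{\tau\in W'}W_\tau$ then multiplies by $M\cdot\const\cdot(1+\log\rho_{|\tau|+1})$, and the hypothesis is exactly what is needed to conclude $m(B)\le\const\cdot r^t$. The H\"older step is the missing idea in your outline: it is what converts the volume packing bound directly into the factor $(\#I^{(n+1)})^{1-t/d}$ and hence into $\tilde Z_{n+1}(t)$, without any capping of the measure.
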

 
\begin{proof}
 The proof will use the \emph{mass distribution principle}, also
  known as the \emph{inverse Frostman lemma}, compare
   \cite[Section 4.1]{falconerfractal}. We begin by defining a
   sequence of probability measures 
   $(m_n)_{n\in\N}$, with $m_n$ supported on the set $X_n$, as follows.
   For any $\omega \in I^{n}$, the restriction of $m_n$ to
  $X_{\om}$ is a constant multiple of Lebesgue measure, chosen such that
\[
m_n(X_{\om})=\frac{\|D\phi_{\om}\|^t}{Z_n(t)}. 
\]
  Note that $\partial X$ has zero Lebesgue measure by the  
  geometric assumption on $X$, and hence the set of points in $X_n$
  that belong to several different sets $X_{\om}$ also has zero
  Lebesgue measure.

\begin{claim}
 Whenever
    $n\geq m$ and $\om\in I^m$, we have 
     \begin{equation} \label{eqn:measureestimate}
        m_n(X_{\om}) \leq K^t\cdot \frac{\|D\phi_{\om}\|^t}{Z_{m}(t)},
    \end{equation}   
where $K$ is the bounded distortion constant.
\end{claim}
\begin{subproof}
Let $\tau\in I^n$ be an extension of a word $\om'\in I^m$. This means that there
exists $\g\in I^{m+1,n}$ such that $\tau = \omega'\g$. Then
\[ \|D\phi_{\tau}\| 
\leq \|D\phi_{\om'}\|\cdot \|D{\phi^{m+1,n}_\g}\| 
\leq K\cdot \|D\phi_{\tau}\|. 
\]
by the bounded distortion condition.
In particular,
      \[ Z_{n}(t) \geq \frac{1}{K^{t}}\cdot Z_m(t)\cdot
      \sum_{\g \in I^{(m+1,n)}}\|D{\phi^{m+1,n}_\g}\|^t        \] 
    and
     \begin{align*}
       m_n(X_{\om}) &= \frac{\sum_{\g\in I^{m+1,n}} 
          \|D\phi_{\omega\g}\|^t}{Z_n(t)} \leq
          \|D\phi_{\om}\|^t\cdot \frac{\sum_{\g\in
              I^{m+1,n}}\|D{\phi^{m+1,n}_{\g}}\|^t}{Z_n(t)} 
          \\ &\leq \|D\phi_{\om}\|^t\cdot
          \frac{K^t}{Z_m(t)}. \qedhere \end{align*} 
%%% LRG - incorporated referee's suggestions.
  \end{subproof} 
Now fix $r>0$ and let $B$ be a ball in $\R^d$ of radius
$r>0$. Consider the set $W$ of finite words $\om$ such that:
 \begin{itemize}
   \item $B\cap\(X_{\om}\)\neq\emptyset$;
   \item $\diam\(X_{\om}\)\geq r$;
   \item $\diam\(X_{\om a}\)\leq r$ for some  $a\in I^{(|w|+1)}$.
 \end{itemize}
  Let
   $W'$ consist of those words in $W$ that are not extensions of
   some other word in $W$. Then the sets $X_{\om}$, with $\om\in W'$,
   have pairwise disjoint interiors and  
   diameter at least equal to $r$. 
  By the uniform distortion property and the assumption on the geometry of $X$,
   there is a constant $M$, independent of $r$, such that no more than
   $M$ of the sets $X_{\om}$, $\om\in W$, can be pairwise disjoint. (The reason is that
   each set $X_{\om}$ contains a cone of definite opening angle, based at a point of
   $B$ and of diameter comparable to $r$; see \cite[Formula (2.10)]{mulondon}. 
   The possible number of such cones that are pairwise disjoint must~-- as we see by considering their total volume~-- necessarily be bounded
    independently of $r$. For details, compare \cite[Lemma 2.7]{mulondon}, 
    which is stated for autonomous systems, but applies equally in our setting.)
%%% LRG - Added explanations as per request of the referee.

In particular,
   $\# W' \leq M$, and for every $k$, the number of words of length
   $k$ in $W$ is also bounded by $M$. 

In the following, we use the term
   "$\const$" to denote a positive constant
   that may depend on $\Phi$ and $t$, but not on the ball $B$ or the
   sets $W$ and $W'$. 

Fix $\tau\in W'$ and put $k=|\tau|$. For $m\geq 0$, we define
    \[ W_{\tau}(m) := \{ \omega \in W\cap I^{k+m}: \tau = \omega|_k\}. \]
 If $\omega\in W_{\tau}(m)$, then 
$$
\diam\(X_{\om|_{k+1}}\)
\leq \const\cdot r\cdot\rho_{k+1}
$$ 
and thus
$$
\diam(X_{\om})
\leq \const\cdot r\cdot \rho_{k+1}\cdot \eta^{m-1}
$$ 
by uniform contraction. Since $\diam(X_\om)>r$ by definition of $W$, we 
  see that $\rho_{k+1}\cdot \eta^{m}$ is bounded
below by a positive constant. Thus
    \[ m \leq \operatorname{const}\cdot (1+\log \rho_{k+1}). \]
%%% LRG - Corrected as per referee report.
Since all words in $W_{\tau}(m)$ have length $k+m$, we must have
  $\#W_\tau(m)\le M$, so we see that
\begin{align}\label{eqn:boundondepth} 
\# W_\tau& \leq \const \cdot (1+\log \rho_{|\tau|+1}), \quad\text{where } \\ \notag
W_\tau =&\bu_{m=0}^\infty W_\tau(m)=\bu%_{m=0}^{\const\log \rho_{k+1}}
\{W_\tau(m) : 0\leq m \leq \const(1+\log\rho_{|\tau|+1})\}.
\end{align} %%%LRG - Corrected as per report
In particular,
\begin{equation}\label{1na20120915}
 \Delta := \max_{\omega\in W} |\omega| < \infty.
\end{equation} 

Now let $\om\in W$ and set $n:= |\om|$. Let $A_{\om} \subset I^{(n+1)}$
be the set of all $a\in I^{(n+1)}$ for which 
$X_{\om a}$ intersects $B$ and $\diam(X_{\om a})\leq r$. Then
\begin{equation} \label{eqn:doubleunion}
B\cap J(\Phi)  
\subset \bigcup_{w\in W} \bigcup_{a\in A_{\om}}X_{\om a},
\end{equation}
and the sets in the union on the right hand side have pairwise
disjoint interiors. 
Since the disk of radius $2r$ and with the same center as $B$ contains
$\bigcup_{a\in A_{\om}} X_{\om a}$, we have 
\begin{equation}\label{eqn:restimate}
 r^d \geq \const\cdot  \sum_{i\in A_{\om}} \diam(X_{\om i})^d %%% LRG - Corrected
       \geq \const \cdot \|D\phi_\om\|^d\cdot \sum_{a\in
           A_{\om}} \|D\phi_a^{(n+1)}\|^d. 
\end{equation}
Let $q\geq \Delta+1$. 
  We are going to estimate the measure of the inner union in
   (\ref{eqn:doubleunion}) with respect to the measure $m_q$.
  First note that that using~\eqref{eqn:measureestimate} and
   the definition of $\Delta$, we get from
  \eqref{eqn:measureestimate} that
\begin{align*}
m_q\lt(\bigcup_{a\in A_{\om}} X_{\om a}\rt) 
     &\leq 
         \sum_{a \in A_{\om}} m_q(X_{\om a}) 
     \leq 
         \frac{K^t}{Z_{n+1}(t)}\sum_{a\in A_{\om}} \|D\phi_{\om a}\|^t
     \\ &\leq
         \frac{K^t\cdot \|D\phi_{\om}\|^t}{Z_{n+1}(t)}\sum_{a\in
           A_{\om}} \|D\phi_{a}^{(n+1)}\|^t.
  \end{align*} %%% LRG - Corrected
 By \eqref{eqn:restimate}, we can estimate
 \[ \|D\phi_\om\| \leq \const\cdot \frac{r}{\sqrt[d]{\sum_{a\in
       A_{\om}} \|D\phi_a^{(n+1)}\|^d}}, \] 
 so
\begin{equation}\label{eqn:lowerboundmoreprecise}
         m_q\lt(\bigcup_{a\in A_{\om}} X_{\om a}\rt) 
     \leq  \frac{\const\cdot r^t}{Z_{n+1}(t)}\cdot 
                      \frac{\sum_{a\in A_{\omega}}\|D\phi_{a}^{(n+1)}\|^t}%
                             {\left(\sum_{a\in
                             A_{\omega}}\|D\phi_{a}^{(n+1)}\|^d\right)^{t/d}}.  
\end{equation} 
 The fraction on the right can be estimated using H\"older's
 inequality: putting $\theta := d/t\geq 1$, we see that 
\[ 
\frac{\sum_{a\in A_{\om}} \|D\phi_a^{(n+1)}\|^t}
     {\left(\sum_{a \in A_{\om}}\|D\phi_a^{(n+1)}\|^d\right)^{t/d}} 
=\frac{\sum_{a\in A_{\om}}\|D\phi_a^{(n+1)}\|^t}
      {\left(\sum_{a \in
            A_{\om}}(\|D\phi_a^{(n+1)}\|^t)^{\theta}\right)^{1/\theta}}
 \leq  
         (\# A_{\om})^{1-1/\theta} \leq (\# I^{(n+1)})^{1-t/d}.
\]
So   
\begin{equation}\label{eqn:lowerboundgrowth}
         m_q\lt(\bigcup_{a\in A_{\om}} X_{\om a}\rt) 
     \leq  \frac{\const\cdot r^t}{Z_{n+1}(t)}\cdot (\# I^{(n+1)})^{1-t/d}. 
\end{equation}
Now $Z_{n+1}(t)\geq \const\cdot Z_n(t) \cdot \# I^{(n+1)}\cdot {\un
  c}_{n+1}^t$, and therefore  
\begin{align*}
         m_q\lt(\bigcup_{a\in A_{\om}} X_{\om a}\rt) 
     &\leq  \frac{\const\cdot r^t}{Z_{n+1}(t)}\cdot (\# I^{(n+1)})^{1-t/d}  \\
      &\leq 
            \frac{\const}{Z_n(t)\cdot (\# I^{(n+1)})^{t/d}\cdot {\un
                c}_{n+1}^t} = \frac{\const}{\tilde{Z}_{n+1}(t)}\cdot
            r^t. \end{align*} 
Now let $\tau\in W'$, and $\om\in W_{\tau}$. Then, by the assumption
of the theorem, we get 
\begin{align*}
m_q(\bigcup_{a\in A_{\om}} X_{\om a}) 
&\leq \frac{\const}{\Zt_{n+1}(t)} \cdot r^t 
\leq \frac{\const}{1+\log\max_{j\leq n+1} \rho_j}\cdot r^t \\
&\le\frac{\const}{1+\log\rho_{|\tau|+1}}\cdot r^t,
\end{align*}
provided that $r$ was chosen sufficiently small, and hence $n=|\om|$ is
sufficiently large. So we can apply (\ref{eqn:boundondepth}) to see that
$$
\aligned
m_q(B) 
&\le m_q\lt(\bu_{\tau\in W'}\bu_{\om\in W_\tau}\bu_{a\in A_{\om}}X_{\om
    a}\rt)
\le \sum_{\tau\in W'}\sum_{\om\in W_\tau}m_q\lt(\bu_{a\in A_{\om}}X_{\om a}\rt) \\
&\le \sum_{\tau\in W'}r^t\sum_{\om\in W_\tau}
       \frac{\const}{1+\log\rho_{|\tau|+1}} \\
&\le \const r^t\sum_{\tau\in W'}\frac{\#W_\tau}{1+\log\rho_{|\tau|+1}} \\
&\le \const r^t.
\endaligned
$$
Hence, if $m$ is an arbitrary weak limit of the sequence
$(m_q)_1^\infty$, then 
$$
m(B)\le \const r^t
$$
and, by the Converse Frostman Lemma, it follows that
$\hdim(J(\Phi))\geq t$, as claimed. 
\end{proof}
\begin{remark}
   It is tempting to replace the rather coarse bound in the definition
   of $\tilde Z_n(t)$, which uses the minimal size of a piece at level $n$,
   by  
     \[ \sqrt[d]{\sum_{a\in I^{(n)}} \|D\phi_a^{(n)}\|^d}. \]
Unfortunately, with this alternative definition,
Theorem~\ref{thm:lowerbound} no longer holds. We shall leave it to
the reader to construct a counterexample, which can be done along
similar lines as in Section \ref{sec:counterexamples}. 

  However, in view of \eqref{eqn:lowerboundmoreprecise} we \emph{can}
  replace $\tilde{Z}_n(t)$ in the 
   statement of the theorem by
     \[ \hat{Z}_n(t) := Z_n(t)\cdot 
            \min_{A\subset I^{(n)}} 
                   \frac{\sum_{a\in A}
                     \left(\|D\phi_a^{(n)}\|^d\right)^{t/d}}{\sum_{a\in
                       A} \|D\phi^{(n)}_a\|^t}. \] 
   We shall revisit this idea in the proof of Theorem \ref{thm:extremal}.
\end{remark}

\section{Balancing conditions}\label{sec:balance}

We now use Theorem \ref{thm:lowerbound} to establish Bowen's formula in certain cases. 
In particular, we shall prove Corollary~\ref{cor:weaklybalancedbowen},
an important special case of Theorem~\ref{thm:bowensubexponential} that
 is essential for the proof of the full theorem. 

We 
  need to make assumptions on
  how much the sizes of the different sets (as measured by the
  norm of derivatives) vary at a fixed level $n$; i.e., we need to bound the quantity 
  $\rho_n$ that was defined in \eqref{eqn:defnrho}:
 \[
  \rho_n := \sup_{a,b\in I^{(n)}}
        \frac{\|D{\phi^{(n)}_a}\|}{\|D{\phi^{(n)}_b}\|} \geq 1.
     \]
 We note, however, that, in the next section, we will be able to 
  remove 
  the hypotheses on $\rho_n$ from many of the results we prove here.

\begin{defn}[Balancing conditions]\label{defn:balance}
 A non-autonomous conformal iterated function
   system is called \emph{perfectly balanced} if $\rho_n=1$ for all $n$ and
   furthermore all $\phi_i^{(n)}$ are affine similarities.

  The system is called 
  \emph{balanced} if there is a constant $\kappa\geq 1$ such that  
   $\rho_n\leq \kappa$
   for all $n\in\N$, and \emph{weakly balanced} if 
    \[ \lim_{n\to\infty} \frac{1}{n}\log \rho_n = 0. \]

 Finally, we call $\Phi$ \emph{barely balanced} if 
    \[ \lim_{n\to\infty} \frac{\log(1+\log\rho_n)}{n}= 0.\]
\end{defn}

  For the purposes of the estimates on Hausdorff dimension in this paper,
   we shall see that weakly balanced systems do not differ from
   perfectly balanced ones. That is, all our positive results
   apply to weakly balanced systems, 
   while all weakly-balanced counterexamples
   can be constructed to be perfectly balanced.  %%%LRG - Corrected

 The ``barely balanced'' condition is of importance mainly because, for
  these systems, 
  Theorem~\ref{thm:lowerbound} becomes a lower bound depending only on the
  behavior of the functions $\tilde Z_n(t)$.
  Recall that this quantity was defined in \eqref{eqn:Ztilde} as 
\[ 
\tilde Z_n(t) 
:= Z_{n-1}(t) \cdot (\# I^{(n)})^{\frac{t}{d}}\cdot 
               {\un c}_n^t
\]

\begin{lem}[Lower bound for barely balanced systems]\label{c120111111}%%%
   \label{lem:barely}%%
  Let $\Phi$ be a finite NCIFS.
  \begin{enumerate}
    \item The function 
  \[  \tilde P(t):=\liminf_{n\to\infty}\frac1n\log\tilde Z_n(t) \]
      is strictly decreasing when it is finite; in fact, 
     \[  \tilde{P}(t') \geq \tilde{P}(t) + (t-t')\cdot \log\frac{1}{\eta} \]
  when $t'<t$ and $\tilde{P}(t)$ is finite. 
   (Here $\eta$ denotes the contraction constant, as usual.)
   \item
      Suppose that $\Phi$ is barely balanced,
     and that $t\ge 0$ is such that
      \[
      \tilde P(t):=\liminf_{n\to\infty}\frac1n\log\tilde Z_n(t)\ge 0. \]
       Then $\hdim(J(\Phi))\geq t$.
   \end{enumerate}
\end{lem}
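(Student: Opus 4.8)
The plan is to prove the two parts of Lemma~\ref{lem:barely} by essentially unwinding the definitions and then invoking Theorem~\ref{thm:lowerbound}. For part (1), I would argue exactly as in the proof of Lemma~\ref{lem:monotonicity}: fix $t' < t$ and compare $\tilde Z_{n}(t')$ with $\tilde Z_n(t)$ termwise. Recall $\tilde Z_n(t) = Z_{n-1}(t)\cdot (\#I^{(n)})^{t/d}\cdot \un c_n^{t}$. For the $Z_{n-1}$ factor, the uniform contraction assumption $\|D\phi_\omega^{n-1}\|\le \eta^{n-1}$ gives $\|D\phi_\omega^{n-1}\|^{t'} = \|D\phi_\omega^{n-1}\|^{t}\cdot \|D\phi_\omega^{n-1}\|^{t'-t}\ge \|D\phi_\omega^{n-1}\|^{t}\cdot \eta^{(n-1)(t'-t)}$, so $Z_{n-1}(t')\ge \eta^{(n-1)(t'-t)} Z_{n-1}(t) = \eta^{-(n-1)(t-t')}Z_{n-1}(t)$. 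For the $\un c_n^t$ factor, since $\un c_n \le \eta$ we similarly have $\un c_n^{t'} = \un c_n^{t}\cdot \un c_n^{t'-t}\ge \un c_n^{t}\cdot\eta^{t'-t}$. The $(\#I^{(n)})^{t/d}$ factor only decreases as the exponent drops (it is $\ge 1$), so it is harmless in the direction we want; actually we just bound $(\#I^{(n)})^{t'/d}\ge 1$ against $(\#I^{(n)})^{t/d}\ge 1$ — more carefully, since $\#I^{(n)}\ge 1$, lowering the exponent can only lose a bounded-by-nothing... I would simply note $(\#I^{(n)})^{(t'-t)/d}\ge (\text{something})$; the cleanest route is to observe $(\#I^{(n)})^{t/d}\le \eta^{-Cn}$ is false in general, so instead I keep the $\#I^{(n)}$ factor untouched: write $\tilde Z_n(t') \ge \eta^{-(n-1)(t-t')}\cdot\eta^{-(t-t')}\cdot Z_{n-1}(t)\cdot(\#I^{(n)})^{t'/d}\cdot\un c_n^{t}$, and since $t'<t$ and $\#I^{(n)}\ge 1$ we have $(\#I^{(n)})^{t'/d}\le(\#I^{(n)})^{t/d}$, which is the \emph{wrong} direction. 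The fix: absorb the $\#I^{(n)}$ discrepancy trivially by noting that for the $\liminf$ of $\frac1n\log$ we only need the inequality up to a factor tending to $1$ per step, and $(\#I^{(n)})^{(t-t')/d}\ge 1$ means $\tilde Z_n(t)$'s $\#I$-factor \emph{dominates} $\tilde Z_n(t')$'s — so to get a lower bound on $\tilde Z_n(t')$ I should instead bound $(\#I^{(n)})^{t'/d}\ge 1$ and compare against $\tilde Z_n(t)$ having $(\#I^{(n)})^{t/d}$; this gives $\tilde Z_n(t')\ge \eta^{-n(t-t')}\un c_n^{t}(\#I^{(n)})^{-t/d}\cdot\tilde Z_n(t)\cdot(\#I^{(n)})^{... }$ which is getting circular. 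The honest statement is just: $\tilde Z_n(t') / \tilde Z_n(t) = \bigl(Z_{n-1}(t')/Z_{n-1}(t)\bigr)\cdot(\#I^{(n)})^{(t'-t)/d}\cdot\un c_n^{t'-t}\ge \eta^{(n-1)(t'-t)}\cdot 1\cdot\eta^{t'-t}\ge \eta^{n(t'-t)}$, where the middle inequality uses $(\#I^{(n)})^{(t'-t)/d}\ge$ ... no: $t'-t<0$ so $(\#I^{(n)})^{(t'-t)/d}\le 1$. Hence the inequality I actually get is an \emph{upper} bound on the ratio, not a lower bound. I therefore need to be more careful and should keep the $\#I^{(n)}$ factor explicitly; the correct monotonicity follows because $(\#I^{(n)})^{t'/d}\cdot\un c_n^{t'}$ versus $(\#I^{(n)})^{t/d}\cdot\un c_n^t$: write $\un c_n = \|D\phi_{a_n}^{(n)}\|$ for the minimizing index and recall $\#I^{(n)}\cdot\un c_n^d\le \const\cdot\operatorname{vol}(\bigcup\phi_a^{(n)}(X))\le\const$, so $(\#I^{(n)})^{1/d}\cdot\un c_n\le\const$, i.e.\ $(\#I^{(n)})^{1/d}\un c_n\le\const\eta^{...}$ — in any case $(\#I^{(n)})^{1/d}\un c_n\le\const$. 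Thus $\bigl((\#I^{(n)})^{1/d}\un c_n\bigr)^{t'}\ge \const^{t'-t}\cdot\bigl((\#I^{(n)})^{1/d}\un c_n\bigr)^{t}$ when $t'<t$, and combined with $Z_{n-1}(t')\ge\eta^{(n-1)(t'-t)}Z_{n-1}(t)$ this yields $\tilde Z_n(t')\ge\const^{t-t'}\eta^{(n-1)(t'-t)}\tilde Z_n(t)$, hence $\tilde P(t')\ge\tilde P(t)+(t-t')\log(1/\eta)$ as claimed. (The constant disappears in the $\frac1n\log$ limit.)

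For part (2), I would apply Theorem~\ref{thm:lowerbound} directly. Fix $t$ with $\tilde P(t)\ge 0$ and let $\eps>0$ be arbitrary. By the barely-balanced hypothesis, $\frac1n\log(1+\log\max_{j\le n}\rho_j)\to 0$, so $1+\log\max_{j\le n}\rho_j \le e^{\eps n}$ for all large $n$. Also, since $\tilde P(t)\ge 0$, for every $\delta>0$ we have $\tilde Z_n(t)\ge e^{-\delta n}$ for infinitely many $n$; but to feed Theorem~\ref{thm:lowerbound} we want a liminf of a \emph{ratio} to be positive, which is delicate because $\tilde P$ only controls the liminf of $\frac1n\log\tilde Z_n$. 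The standard trick is to apply the previous monotonicity: for $t'<t$ slightly smaller, part (1) gives $\tilde P(t') \ge (t-t')\log(1/\eta) > 0$, hence $\tilde Z_n(t')\ge e^{cn}$ for all large $n$ with $c = \frac12(t-t')\log(1/\eta)>0$. Choosing $\eps<c$ in the estimate $1+\log\max_{j\le n}\rho_j\le e^{\eps n}$ then gives
\[
\liminf_{n\to\infty}\frac{\tilde Z_n(t')}{1+\log\max_{j\le n}\rho_j} \ge \liminf_{n\to\infty}\frac{e^{cn}}{e^{\eps n}} = +\infty > 0,
\]
so Theorem~\ref{thm:lowerbound} yields $\hdim(J(\Phi))\ge t'$. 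Letting $t'\uparrow t$ gives $\hdim(J(\Phi))\ge t$.

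The main obstacle, as the rambling above shows, is getting part (1) cleanly: one must handle the $(\#I^{(n)})^{t/d}\un c_n^t$ factor correctly, and the key fact that rescues monotonicity is the volume bound $(\#I^{(n)})^{1/d}\cdot\un c_n \le \const$ (from the open set condition — each $\phi_a^{(n)}(X)$ has volume $\asymp\|D\phi_a^{(n)}\|^d$, these are disjoint inside a bounded set, so their number times the $d$-th power of the smallest norm is $\lesssim 1$), which makes $(\#I^{(n)})^{1/d}\un c_n$ behave like a single contraction ratio $\le\const$ to which the exponent-lowering estimate applies in the favorable direction. Once part (1) is in hand, part (2) is a routine "pass to a slightly smaller exponent" argument to convert a liminf-of-logs statement into the liminf-of-ratio hypothesis of Theorem~\ref{thm:lowerbound}.
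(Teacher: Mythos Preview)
Your argument is correct and follows essentially the same route as the paper. For part~(1), the paper also uses the open-set-condition volume bound $\#I^{(n)}\cdot\un c_n^{\,d}\le C$ (equivalently $(\#I^{(n)})^{1/d}\un c_n\le C^{1/d}$) together with the contraction estimate $Z_n(t)/Z_n(t')\le\eta^{(t-t')n}$ to obtain $\tilde Z_{n+1}(t)/\tilde Z_{n+1}(t')\le C^{(t-t')/d}\eta^{(t-t')n}$; your final computation is the same inequality written in reciprocal form. For part~(2), the paper likewise passes to $t'<t$, uses part~(1) to get $\tilde P(t')>0$ and hence exponential growth of $\tilde Z_n(t')$, and then invokes the barely-balanced hypothesis to kill the subexponential denominator before applying Theorem~\ref{thm:lowerbound}. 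The only difference is presentational: the paper's proof is a clean four-line computation, while your exposition wanders through several false starts before landing on the right idea---you should tighten part~(1) to the last paragraph of your discussion and drop the earlier attempts.
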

\begin{proof}
  By definition of ${\un c}_n$ and Bounded Distortion, 
   the image of
  each $\phi_i^{(n)}$ contains a ball of radius comparable to ${\un c}_n$.
  By the Open Set Condition, it follows that 
  there exists a constant $C>0$ such that
 \[
     \#I^{(n+1)}\cdot {\un c}_{n+1}^d           \le C 
 \]
for all $n\ge 0$. Hence, for $t'<t$,
\[
      \frac{\tilde Z_{n+1}(t)}{\tilde Z_{n+1}(t')}
    =
      \frac{Z_n(t)}{Z_n(t')}\((\#I^{(n+1)})^{1/d}{\un c}_{n+1}\)^{t-t'}
    \le 
      \frac{Z_n(t)}{Z_n(t')}C^{\frac1d(t-t')}
    \le 
      C^{\frac1d(t-t')}\eta^{(t-t')n}
\]
for sufficiently large $n$. So
\begin{align*}
   \tilde{P}(t') = \liminf_{n\to\infty}\frac{1}{n+1} \tilde Z_{n+1}(t') &\geq 
   (t-t')\cdot \log(1/\eta) + 
      \liminf_{n\to\infty}\left(
            \frac{\tilde{Z}_n(t)}{n} - \frac{(t-t')\log C}{dn}\right)  \\
       &= (t-t')\cdot \log(1/\eta) + \tilde{P}(t). \end{align*}
%%%LRG - Corrected
      
 This proves the first claim. Now suppose that $\Phi$ is barely balanced
   and that $\tilde{P}(t)\geq 0$. Let $t'<t$ be arbitrary. Then 
   $\tilde{P}(t')>0$, so $\tilde{Z}_n(t')$ grows at least exponentially in $n$,
   while the denominator %%%LRG - corrected
   in the hypothesis of Theorem~\ref{thm:lowerbound} grows subexponentially
   since $\Phi$ is barely balanced. So $\HD(J(\Phi))\geq t'$     
 by Theorem \ref{thm:lowerbound}. Since $t'<t$ was arbitrary,
 we have obtained the desired conclusion. 
\end{proof}

For the remainder of the section, we shall study weakly balanced systems.

\begin{cor}[Bowen's formula for weakly balanced
  systems]\label{cor:weaklybalancedbowen} 
 Let $\Phi$ be
  a nonautonomous iterated function system that is 
   subexponentially bounded and weakly balanced. 
   Then $\hdim(J(\Phi))=B(\Phi)$.
\end{cor}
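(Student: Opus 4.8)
The plan is to combine the elementary upper bound $\hdim(J(\Phi))\leq B(\Phi)$ from Lemma~\ref{lem:HDupperBound} with the lower bound machinery of Theorem~\ref{thm:lowerbound}, the point being that under the subexponential-boundedness and weak-balancing hypotheses the modified sum $\tilde Z_n(t)$ carries the same exponential growth information as $Z_n(t)$, so that $\tilde P(t)$ and $\lP(t)$ have the same sign changes. Since the upper bound is free, it suffices to show $\hdim(J(\Phi))\geq t$ for every $t<B(\Phi)$.

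So fix $t<B(\Phi)$, and pick $t'$ with $t<t'<B(\Phi)$. By definition of $B(\Phi)$ we have $\lP(t')>0$, so $Z_n(t')$ grows at least exponentially: there is $c>0$ with $Z_n(t')\geq e^{cn}$ for all large $n$. First I would observe that weak balancing implies, via Bounded Distortion and the Open Set Condition, that the minimal piece size ${\un c}_n$ cannot be too small relative to the maximal one; more precisely, since each $\phi_a^{(n)}(X)$ contains a ball of radius comparable to $\|D\phi_a^{(n)}\|$ and these have disjoint interiors inside a bounded set, one gets $\#I^{(n)}\cdot(\max_a\|D\phi_a^{(n)}\|)^d\leq C$, and hence
\[
  (\#I^{(n)})^{t/d}\cdot {\un c}_n^{\,t}
  \;\geq\; \rho_n^{-t}\cdot (\#I^{(n)})^{t/d}\cdot (\max_a\|D\phi_a^{(n)}\|)^t
  \;\geq\; \rho_n^{-t}\cdot C^{-t/d}\cdot \#I^{(n)}.
\]
Combining this with $\tilde Z_n(t')=Z_{n-1}(t')\cdot(\#I^{(n)})^{t'/d}\cdot{\un c}_n^{t'}$, we get
\[
  \frac1n\log\tilde Z_n(t')\;\geq\;\frac{n-1}{n}\cdot\frac{1}{n-1}\log Z_{n-1}(t')
     \;-\;\frac{t'}{n}\log\rho_n\;-\;\const\cdot\frac{1+\log\#I^{(n)}}{n}.
\]
By subexponential boundedness $\tfrac1n\log\#I^{(n)}\to 0$ and by weak balancing $\tfrac1n\log\rho_n\to0$, so the last two terms vanish in the limit, whence $\tilde P(t')=\liminf_n\tfrac1n\log\tilde Z_n(t')\geq \lP(t')>0$. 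In particular $\tilde Z_n(t')$ grows at least exponentially.

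Now I apply Theorem~\ref{thm:lowerbound} with the exponent $t$ (not $t'$). By part~(a) of Lemma~\ref{lem:barely} (monotonicity of $\tilde P$), $\tilde P(t)\geq\tilde P(t')+(t'-t)\log(1/\eta)>0$ as well, so $\tilde Z_n(t)$ grows at least exponentially. Meanwhile the denominator $1+\log\max_{j\leq n}\rho_j$ in the hypothesis of Theorem~\ref{thm:lowerbound} grows at most subexponentially: weak balancing gives $\log\rho_j=o(j)$, hence $\max_{j\leq n}\log\rho_j=o(n)$, so $\log(1+\log\max_{j\leq n}\rho_j)=o(n)$ and the denominator is $e^{o(n)}$. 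Therefore
\[
  \liminf_{n\to\infty}\frac{\tilde Z_n(t)}{1+\log\max_{j\leq n}\rho_j}=+\infty>0,
\]
and Theorem~\ref{thm:lowerbound} yields $\hdim(J(\Phi))\geq t$. Letting $t\uparrow B(\Phi)$ gives $\hdim(J(\Phi))\geq B(\Phi)$, and together with Lemma~\ref{lem:HDupperBound} this proves $\hdim(J(\Phi))=B(\Phi)$. (One could phrase this even more cleanly by first noting that $\Phi$ barely balanced follows from weakly balanced and applying Lemma~\ref{lem:barely}(b) directly once one has checked $\tilde P(t)\geq 0$ for all $t<B(\Phi)$; that reorganization removes the need to invoke Theorem~\ref{thm:lowerbound} a second time.)

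\textbf{Main obstacle.} The only genuinely non-formal step is the inequality $\tilde P(t)\geq \lP(t)$ — i.e., checking that passing from $Z_{n-1}(t)$ to $\tilde Z_n(t)$ via the factor $(\#I^{(n)})^{t/d}{\un c}_n^t$ does not lose exponential growth. This rests on the geometric estimate $\#I^{(n)}\cdot(\max_a\|D\phi_a^{(n)}\|)^d\leq C$ (disjoint balls in a bounded domain) together with the weak-balancing control $\rho_n=e^{o(n)}$ relating $\max_a\|D\phi_a^{(n)}\|$ to ${\un c}_n$; everything else is bookkeeping with $\liminf$'s. I expect this is exactly the content the authors isolate, likely reusing the computation already done in the proof of Lemma~\ref{lem:barely}(a).
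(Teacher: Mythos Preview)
Your overall plan is the same as the paper's: show $\tilde P(t)\geq \lP(t)$ for $t<B(\Phi)$ and then invoke Lemma~\ref{lem:barely} (or directly Theorem~\ref{thm:lowerbound}). The organization via Lemma~\ref{lem:barely}(b) is exactly what the paper does. However, your derivation of the key inequality $\tilde P(t)\geq\lP(t)$ contains a genuine error.

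The problem is the displayed chain
\[
  (\#I^{(n)})^{t/d}\cdot {\un c}_n^{\,t}
  \;\geq\; \rho_n^{-t}\cdot (\#I^{(n)})^{t/d}\cdot \ov c_n^{\,t}
  \;\geq\; \rho_n^{-t}\cdot C^{-t/d}\cdot \#I^{(n)}.
\]
The second inequality would require $\ov c_n\geq C^{-1/d}(\#I^{(n)})^{(d-t)/(dt)}$, but the packing bound $\#I^{(n)}\cdot\un c_n^{\,d}\leq C$ gives only an \emph{upper} bound on the contraction factors, never a lower one. Indeed $\ov c_n$ can be arbitrarily small even for perfectly balanced, uniformly finite systems (take $\#I^{(n)}=1$ and a single map of derivative $e^{-\sqrt{n}}$), so the inequality is simply false. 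Consequently your next displayed estimate, bounding $\tfrac1n\log\tilde Z_n(t')$ from below by $\tfrac1n\log Z_{n-1}(t')$ minus subexponential corrections, does not follow.

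The fix is both simpler and avoids any geometric packing estimate: compare $\tilde Z_{n+1}(t)$ with $Z_{n+1}(t)$ rather than with $Z_n(t)$. From
\[
  Z_{n+1}(t)\;\leq\; Z_n(t)\cdot \#I^{(n+1)}\cdot \ov c_{n+1}^{\,t}
           \;=\; Z_n(t)\cdot(\#I^{(n+1)})^{t/d}\un c_{n+1}^{\,t}\cdot
                 (\#I^{(n+1)})^{1-t/d}\rho_{n+1}^{\,t}
           \;=\;\tilde Z_{n+1}(t)\cdot(\#I^{(n+1)})^{1-t/d}\rho_{n+1}^{\,t}
\]
one reads off directly that $\tfrac1n\log\tilde Z_n(t)\geq \tfrac1n\log Z_n(t)-o(1)$ under the subexponential-boundedness and weak-balancing hypotheses, hence $\tilde P(t)\geq\lP(t)>0$ for $t<B(\Phi)$. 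This is exactly the computation the paper gives; once you replace your flawed chain by this one-line estimate, the rest of your argument goes through unchanged.
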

\begin{proof}
 Let $t<B(\Phi)$. Then $\lP(t)>0$, and hence  
 $Z_n(t)$ grows at least exponentially in $n$. We have
 \begin{align*}
Z_{n+1}(t) \leq Z_n(t) \cdot \sum_{a\in I^{(n+1)}} \|D\phi_a^{(n+1)}\|^t &\leq
 Z_n(t)\cdot \# I^{(n+1)} \cdot (\rho_{n+1} \cdot \un c_{n+1})^t \\
   &=\tilde Z_{n+1}(t) \cdot (\#I^{n+1})^{1-t/d} \cdot \rho_{n+1}^t. 
\end{align*}
Since the system $\Phi$ is subexponentially bounded and weakly balanced,
$(\#I^{(n+1)})^{1-t/d}\rho_{n+1}^t$ grows at most subexponentially in
$n$. So
  \[ \liminf_{n\to\infty} \frac{1}{n} \log \tilde{Z}_{n}(t) \geq
     \liminf_{n\to\infty} \frac{1}{n} Z_{n}(t) = \lP(t)>0. \]
 Corollary~\ref{c120111111} implies that $\hdim(J(\Phi))\geq t$.

Since $t<B(\Phi)$ was arbitrary, we see that $\hdim(J(\Phi))\geq B(\Phi)$.
Together with Lemma~\ref{lem:HDupperBound}, this completes the proof.
\end{proof}

As we shall see in the next section, 
 the condition that $\Phi$ is subexponentially bounded
 cannot be relaxed:  there are 
 perfectly balanced 
  systems where Bowen's formula fails and $\# I^{(n)}$ has arbitrarily small
  exponential growth. 

 However, for these examples the contraction constants, along with the numbers
  $\# I^{(n)}$, behave very irregularly: for many values of $n$, $\ov{c}_n$ is bounded away from zero and 
  $\# I^{(n)}$ is finite. The next result, another interesting consequence of
  Theorem~\ref{thm:lowerbound}, establishes 
  part of Theorem \ref{thm:bowenexponential} for 
  (sufficiently balanced) systems of at most exponential growth when ${\ov c}_n\to 0$.

\begin{cor}[Systems of exponential growth]\label{cor:bowenexponential}
  Suppose that $\Phi$ is a nonautonomous conformal IFS such that
     \[
        \limsup_{n\to\infty} \frac{1}{n}\log \# I^{(n)} < \infty 
    \quad\text{and}\quad
        \limsup_{n\to\infty} \frac{1}{n}\log \rho_n  < \infty.
     \]

  If $\lim_{n\to\infty} {\ov c}_n = 0$, then Bowen's formula holds. (Recall that ${\ov c}_n = {\un c}_n \cdot \rho_n$ is the largest derivative
    of a map in $\Phi^{(n)}$.)
\end{cor}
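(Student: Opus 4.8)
The strategy is to reduce to the barely balanced lower bound of Lemma~\ref{c120111111} exactly as in Corollary~\ref{cor:weaklybalancedbowen}, but now exploiting the extra hypothesis $\ov c_n\to 0$ to absorb the (merely exponential, not subexponential) growth of $\# I^{(n)}$ and $\rho_n$. By Lemma~\ref{lem:HDupperBound} we only need the lower bound $\hdim(J(\Phi))\ge B(\Phi)$, so fix $t<B(\Phi)$; then $\lP(t)>0$, so $Z_n(t)$ grows at least exponentially, say $\tfrac1n\log Z_n(t)\to \lP(t)>0$ along a $\liminf$. As in the proof of Corollary~\ref{cor:weaklybalancedbowen} we have
\[
   Z_{n+1}(t)\le \tilde Z_{n+1}(t)\cdot (\# I^{(n+1)})^{1-t/d}\cdot \rho_{n+1}^t,
\]
so
\[
   \frac1n\log\tilde Z_{n+1}(t)\ge \frac1n\log Z_{n+1}(t) - \frac{1-t/d}{n}\log\# I^{(n+1)} - \frac{t}{n}\log\rho_{n+1}.
\]
The first term has positive $\liminf$; the problem is that the two subtracted terms now only have finite (not zero) $\limsup$, so this naive estimate does not immediately give $\tilde P(t)>0$.

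The key point is that $\ov c_n\to 0$ forces the ``bad'' terms to be small \emph{on the relevant scale}. First I would record the elementary bound, already used inside the proof of Lemma~\ref{c120111111}: by the Open Set Condition and Bounded Distortion there is $C>0$ with $\#I^{(n)}\cdot\un c_n^{\,d}\le C$, hence $\#I^{(n)}\le C\,\un c_n^{-d}$ and therefore $(\#I^{(n)})^{1-t/d}\le C^{1-t/d}\,\un c_n^{\,t-d}$. Writing $\un c_n=\ov c_n/\rho_n$ and combining with the displayed inequality gives
\[
   Z_{n+1}(t)\le \const\cdot \tilde Z_{n+1}(t)\cdot \ov c_{n+1}^{\,t-d}\cdot \rho_{n+1}^{\,d-t}\cdot\rho_{n+1}^{t}
   = \const\cdot \tilde Z_{n+1}(t)\cdot \ov c_{n+1}^{\,t-d}\cdot \rho_{n+1}^{\,d}.
\]
This is still not quite enough on its own, and the honest way to finish is to pass to a subsystem: one discards, at each level, all maps whose derivative norm is much smaller than $\ov c_n$, keeping only a ``balanced'' block of comparable pieces carrying a definite proportion of $Z_n(t)$. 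Concretely, for a suitable threshold $\theta\in(0,1)$ let $\tilde I^{(n)}:=\{a\in I^{(n)}:\|D\phi_a^{(n)}\|\ge\theta\,\ov c_n\}$; then $\rho_n$ for the subsystem is $\le 1/\theta$, so the subsystem is balanced, while a standard dyadic/pigeonhole argument over the $O(n)$ possible size-scales (using that all scales lie in $[\eta^n,\ov c_n]$ and $\log(1/\ov c_n)=O(n)$) shows one can choose the scales so that the retained pieces still satisfy $Z_n^{\tilde\Phi}(t)\ge Z_n(t)/\operatorname{poly}(n)$, hence $\lP^{\tilde\Phi}(t)\ge\lP(t)>0$ as well. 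Applying Corollary~\ref{cor:weaklybalancedbowen} (the subsystem is balanced, hence weakly balanced) — or directly Lemma~\ref{c120111111}, since for the balanced subsystem the denominator $1+\log\max_{j\le n}\rho_j$ is bounded — gives $\hdim(J(\tilde\Phi))\ge t$, and $J(\tilde\Phi)\subset J(\Phi)$ yields $\hdim(J(\Phi))\ge t$.

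Here I expect the main obstacle to be precisely this last scale-selection step: one must argue that, although the number of distinct "size buckets" at level $n$ grows linearly in $n$ (because $\log(1/\ov c_n)$ can grow linearly), one can nonetheless pick buckets consistently across all levels so that the surviving subsystem keeps a super-polynomially-not-decaying share of $Z_n(t)$ while remaining honestly balanced — and crucially, that the hypothesis $\ov c_n\to 0$ (together with $\limsup\frac1n\log\#I^{(n)}<\infty$ and $\limsup\frac1n\log\rho_n<\infty$) is what makes the loss in this reduction subexponential rather than exponential. Once the subsystem is produced with $\lP^{\tilde\Phi}(t)>0$ and bounded $\rho_n$, the rest is a direct appeal to the already-established Corollary~\ref{cor:weaklybalancedbowen}. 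Finally, letting $t\uparrow B(\Phi)$ and invoking Lemma~\ref{lem:HDupperBound} closes the argument.
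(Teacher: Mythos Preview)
Your scale-selection/subsystem approach has a genuine gap, and it is not merely the technical obstacle you flag. Even if your pigeonhole step produced a balanced subsystem $\tilde\Phi$ with $\lP^{\tilde\Phi}(t)>0$, neither of the tools you invoke would close the argument. Corollary~\ref{cor:weaklybalancedbowen} requires the system to be \emph{subexponentially bounded} as well as weakly balanced; your $\tilde\Phi$ inherits the (possibly exponential) growth of $\#I^{(n)}$, so that corollary does not apply. And appealing directly to Lemma~\ref{c120111111} requires $\tilde P^{\tilde\Phi}(t)\ge 0$, not $\lP^{\tilde\Phi}(t)>0$: the passage from $Z_n$ to $\tilde Z_n$ still costs you the factor $(\#\tilde I^{(n)})^{1-t/d}$, which may grow exponentially with a rate exceeding $\lP^{\tilde\Phi}(t)$. (Your fixed-$\theta$ bucketing is also problematic on its own: for a single threshold $\theta$ there is no reason each level retains a definite fraction of the mass, while allowing $\theta$ to vary with $n$ destroys the balancing, and picking one of $O(n)$ dyadic buckets per level costs a factor $\sim n!$ over $n$ levels.)

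The paper's proof avoids all of this with a one-line parameter shift. Fix $t<B(\Phi)$ and $\eps>0$, set $t'=t-\eps$. Since $Z_n(t)\to\infty$ and $\max_{\om\in I^n}\|D\phi_\om\|\le\prod_{j\le n}\ov c_j$, one has
\[
   Z_n(t')\ \ge\ Z_n(t)\cdot\Bigl(\prod_{j=1}^n\ov c_j\Bigr)^{-\eps}\ \ge\ \prod_{j=1}^n\ov c_j^{-\eps}.
\]
Because $\ov c_n\to 0$, the Ces\`aro average $\tfrac1n\sum_{j\le n}\log(1/\ov c_j)\to\infty$, so $Z_n(t')$ grows \emph{superexponentially}. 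Now the correction factor $(\#I^{(n)})^{1-t'/d}\rho_n^{t'}$ grows only exponentially by hypothesis, so $\tilde Z_n(t')$ still grows superexponentially and $\tilde P(t')=+\infty$. Finally, exponential growth of $\rho_n$ means $\log(1+\log\rho_n)=O(\log n)$, so $\Phi$ itself is barely balanced and Lemma~\ref{c120111111} gives $\HD(J(\Phi))\ge t'=t-\eps$. Let $\eps\to 0$ and $t\to B(\Phi)$. The hypothesis $\ov c_n\to0$ is used precisely to upgrade ``exponential'' to ``superexponential'' via the shift $t\mapsto t-\eps$, which is the idea your proposal is missing.
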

\begin{proof}
  The key observation is that $Z_n(t)$ grows superexponentially for $t< B(\Phi)$, which is enough to show that $\tilde{P}(t)>0$. 

  Indeed, 
   fix $t< B(\Phi)$ %%%LRG -Corrected 
 and let $\eps>0$ and $t':=t-\eps$. For large $n$, we have
     \[ Z_n(t') = \sum_{\omega\in I^n} \|D\phi_{\omega}^n\|^{t-\eps} 
      \geq
         Z_n(t) \cdot 
           \left(\max_{\omega\in I^n} \|D\phi_{\omega}^n\|\right)^{-\eps} 
      \geq
             Z_n(t)\cdot \prod_{j=1}^{n} {\ov c}_j^{-\eps}
      \geq \prod_{j=1}^{n} {\ov c}_j^{-\eps}. \] %%%LRG - corrected
   (Here $K$ is the distortion constant.) Since ${\ov c}_n\to 0$, we have 
      \[ \frac{\sum_{j=1}^n -\log{\ov c}_n}{n} \to\infty, \]
   and hence $(\log Z_n(t-\eps))/n\to\infty$. 
   As in the preceding proof, we have 
     \[ {\tilde Z}_n(t') \geq 
           \frac{Z_n(t')}{(\# I^{(n)})^{1-t'/d}\cdot \rho_n^{t'}}. \]
   We just saw that  the numerator in this expression grows superexponentially, while 
     the denominator grows at most exponentially fast by assumption.
     By Lemma \ref{lem:barely}, it follows that
%%%      ${\tilde Z}_n(t')\to\infty$, and hence 
      $\HD(J(\Phi))\geq t-\eps$. %%% LRG - Corrected
      As $t< B(\Phi)$ and $\eps>0$ were arbitrary, we see that
      $\HD(J(\Phi))\geq B(\Phi)$, as desired.  
\end{proof}
\begin{remark}[Remark 1]
  The requirement that ${\ov c}_n \to 0$ is satisfied whenever
    $\Phi$ is weakly balanced and $\liminf (\log\# I^{(n)})/n > 0$. 
\end{remark}
\begin{remark}[Remark 2]
 The proof shows that the assumption ${\ov c}_n\to 0$ can be replaced by
     \[ \frac{\sum_{j=1}^{n} \-\log{\ov c}_n}{n} \to\infty. \]
\end{remark}

In a similar vein, we can now prove Proposition \ref{prop:bowenregular}, which 
   shows that we can explicitly calculate the Hausdorff dimension
  whenever both the number of pieces at each level and their sizes have 
  regular exponential growth. Using our notation from this
  section, we can restate the result
  as follows. 

\begin{cor}\label{thm:bowenexponentialgeneral}
  Suppose that $\Phi$ is weakly balanced, and that furthermore both limits
    \[ a := \lim_{n\to\infty}\frac1n\log\# I^{(n)} \]
and
\[
b:=\lim_{n\to\infty}\frac{1}{n}\log(1/\ov{c}_n)
\]
exist and are finite and positive. Then $\hdim(J(\Phi))=B(\Phi)=a/b$. 
\end{cor}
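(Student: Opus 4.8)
The plan is to prove the two equalities $B(\Phi) = a/b$ and $\hdim(J(\Phi)) = B(\Phi)$ separately, noting that the second follows almost for free: since $a > 0$ we have $\liminf_{n\to\infty}(\log \# I^{(n)})/n = a > 0$, so by Remark 1 following Corollary~\ref{cor:bowenexponential} the hypothesis ${\ov c}_n \to 0$ is automatic (indeed $b > 0$ forces ${\ov c}_n \to 0$ directly), and the hypotheses $\limsup (\log \# I^{(n)})/n = a < \infty$ and $\limsup (\log \rho_n)/n = 0 < \infty$ (weak balance) are satisfied, so Corollary~\ref{cor:bowenexponential} gives $\hdim(J(\Phi)) = B(\Phi)$. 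Thus the entire content is the computation $B(\Phi) = a/b$.

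To compute $B(\Phi)$, I would estimate $Z_n(t) = \sum_{\omega \in I^n} \|D\phi_\omega^n\|^t$ from above and below. Using bounded distortion, $\|D\phi_\omega^n\| \asymp \prod_{j=1}^n \|D\phi_{\omega_j}^{(j)}\|$ up to the constant $K$, and each factor lies between ${\un c}_j$ and ${\ov c}_j = \rho_j {\un c}_j$. Hence
\[
 K^{-t}\Bigl(\prod_{j=1}^n {\un c}_j\Bigr)^t \prod_{j=1}^n \# I^{(j)} \;\le\; Z_n(t) \;\le\; K^t \Bigl(\prod_{j=1}^n {\ov c}_j\Bigr)^t \prod_{j=1}^n \# I^{(j)}.
\]
Taking $\frac1n\log(\cdot)$ and using that $\frac1n \sum_{j=1}^n \log \# I^{(j)} \to a$ (Cesàro, from $\frac1n\log\# I^{(n)} \to a$) and $\frac1n\sum_{j=1}^n \log(1/{\ov c}_j) \to b$, together with $\frac1n\sum_{j=1}^n\log(1/{\un c}_j) \to b$ as well (since $\log(1/{\un c}_j) = \log(1/{\ov c}_j) + \log\rho_j$ and $\frac1n\log\rho_j \to 0$ implies $\frac1n\sum\log\rho_j \to 0$ by Cesàro), both bounds yield $\frac1n\log Z_n(t) \to a - tb$. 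Therefore $\lP(t) = \uP(t) = a - tb$, which is $>0$ exactly when $t < a/b$ and $<0$ exactly when $t > a/b$; by the definition of Bowen dimension, $B(\Phi) = a/b$.

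The only mild subtlety is the passage from convergence of $\frac1n\log\# I^{(n)}$ (and of $\frac1n\log(1/{\ov c}_n)$, $\frac1n\log\rho_n$) to convergence of the corresponding Cesàro averages $\frac1n\sum_{j=1}^n(\cdot)$; this is the standard fact that if $x_n/n \to L$ then $\frac1{n}\sum_{j\le n} x_j/j$... — more precisely one writes $\log\prod_{j\le n}\# I^{(j)} = \sum_{j\le n}\log\# I^{(j)}$ and applies the Stolz–Cesàro theorem to $\sum_{j\le n}\log\# I^{(j)}$ versus $n$, using $\log\# I^{(j)} = j\cdot(a+o(1))$, which gives $\frac1{n^2}\sum \sim a/2$ — so one should instead directly observe $\sum_{j\le n}\log\# I^{(j)} = \sum_{j\le n} j(a+o(1))$ has the wrong order. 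Correcting this: the right statement is that $\frac1n\log\# I^{(n)}\to a$ gives $\log\# I^{(n)} = (a+o(1))n$, hence $\sum_{j=1}^n\log\# I^{(j)} = (a/2+o(1))n^2$, so in fact $\frac1n\log Z_n(t)$ is of order $n$, not bounded. I would therefore present Proposition~\ref{prop:bowenregular} with the understanding (consistent with the continued-fraction example, where $\|D\phi_j^{(n)}\| \approx a_n^{-2} \approx \lambda^{-2n}$ and $\#I^{(n)} \approx \lambda^n$, so indeed $Z_n(t) \approx \prod_{j\le n}\lambda^{j}\lambda^{-2jt} = \lambda^{\sum_{j\le n} j(1-2t)}$, whose exponent is $\binom{n+1}{2}(1-2t)$) that the correct normalization in the definition is with $Z_n$ itself as written — so I would recheck the definition of $\lP$: it uses $\frac1n\log Z_n$, and the example forces this to diverge unless $t = 1/2$. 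Hence the clean statement is: $\lP(t) = \uP(t) = +\infty$ for $t < a/b$ and $=-\infty$ for $t > a/b$, so $B(\Phi) = a/b$ still follows immediately from the definition $B(\Phi) = \sup\{t : \lP(t) > 0\}$. The main (and only) obstacle is thus bookkeeping the Cesàro sums correctly; once that is done, the result drops out, and the Hausdorff-dimension equality is immediate from Corollary~\ref{cor:bowenexponential}.
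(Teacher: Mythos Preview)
Your argument is correct and follows the same route as the paper: invoke Corollary~\ref{cor:bowenexponential} for $\hdim(J(\Phi))=B(\Phi)$, then sandwich $Z_n(t)$ using $\#I^{(j)}$, ${\un c}_j$, ${\ov c}_j$ to pin down $B(\Phi)=a/b$. Two clean-ups are worth noting. First, splitting $\|D\phi_\omega\|$ into $n$ single-letter factors costs a distortion factor $K^{\pm n}$, not $K^{\pm 1}$; this is harmless precisely because, as you eventually realise, $\frac1n\log Z_n(t)\to\pm\infty$ for $t\neq a/b$, so an $O(1)$ additive term is irrelevant. Second, the paper sidesteps your Ces\`aro detour entirely by working with the one-step ratio
\[
\const\cdot\#I^{(n+1)}\,{\un c}_{n+1}^{\,t}\ \le\ \frac{Z_{n+1}(t)}{Z_n(t)}\ \le\ \#I^{(n+1)}\,{\ov c}_{n+1}^{\,t},
\]
whose right-hand side tends to $0$ when $t>a/b$ and whose left-hand side tends to $\infty$ when $t<a/b$; this gives $Z_n(t)\to 0$ resp.\ $\infty$ directly, with no summation and no need to track the growth order of $\log Z_n$.
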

\begin{proof}
 The previous Corollary implies that Bowen's formula holds, hence it remains to show that
   $B(\Phi)=a/b$. 

Because $\Phi$ is weakly balanced, we have
\[ b=\lim_{n\to\infty}\frac1n\log (1/\un c_n). \]
For any $t$, we have
\[
Z_{n+1}(t)\le Z_n(t)\cdot \#I^{(n+1)}\cdot {\ov c}_{n+1}^t \]
 and 
\[
Z_{n+1}(t)\ge \const \cdot Z_n(t) \cdot \#I^{(n+1)} \cdot {\un c}_{n+1}^t \]

So
    \[ \const \cdot \# I^{(n+1)}\cdot {\un c}_{n+1}^t \leq \frac{Z_{n+1}(t)}{Z_n(t)} \leq \# I^{(n+1)}\cdot {\ov c}_{n+1}^t. \]
By assumption, if $t>a/b$, the right-hand side tends to zero as $n\to\infty$. Likewise, for $t<a/b$, the left hand side tends to $\infty$.
   This implies that $Z_n(t)$ tends to $0$ for $t>a/b$ and to $\infty$ for $t<a/b$, and hence $B(\Phi)=a/b$, as claimed. 
\end{proof}
\begin{remark}
The proof above gives a litttle bit more. Namely, suppose that $\Phi$ is a nonautonomous conformal IFS such that 
\[
0 < a_-
:= \liminf_{n\to\infty}\frac{\log\# I^{(n)}}{n}
\le \limsup_{n\to\infty}\frac{\log\# I^{(n)}}{n} =: a_+ < \infty \]
and
\[ 0 < b_- := \liminf_{n\to\infty} \frac{\log(1/{\ov c}_n)}{n} \leq
    \limsup_{n\to\infty} \frac{\log(1/{\un c}_n)}{n} =: b_+ < \infty, \]
then
$$
\frac{a_-}{b_+}\le \hdim(J(\Phi))\le \frac{a_+}{b_-}.
$$
\end{remark}

Together with Corollary \ref{cor:bowenexponential}, the following result
  provides a preliminary version of Theorem \ref{thm:bowenexponential} for 
  sufficiently balanced systems of at most exponential growth.

\begin{thm}[Systems with extremal Hausdorff and Bowen dimension]
    \label{thm:extremal}
 Let $\Phi$ be a finite non-autonomous conformal
  iterated function system such that
    \[ \limsup_{n\to\infty} \frac{\log \# I^{(n)}}{n} < \infty \quad\text{and}\quad    \limsup \log \rho_n/n < \infty. \]

 If $\HD(J(\Phi))=0$, then $B(\Phi)=0$, 
    and if  $B(\Phi)=d$, then $\HD(J(\Phi))=d$.
\end{thm}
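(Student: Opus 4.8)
The plan is to prove the two claims by passing to suitable subsystems and applying the lower bounds from the previous section.

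For the first implication ($\HD(J(\Phi))=0 \Rightarrow B(\Phi)=0$), I argue by contrapositive: assume $B(\Phi)>0$, and produce a subsystem $\Psi$ of $\Phi$ with $\HD(J(\Psi))>0$; since $J(\Psi)\subset J(\Phi)$, this gives $\HD(J(\Phi))>0$. Fix $t$ with $0<t<B(\Phi)$, so that $Z_n(t)$ grows at least exponentially. The obstacle to applying Corollary~\ref{c120111111} directly is that $\Phi$ need not be weakly balanced: although $\limsup (\log\rho_n)/n<\infty$, the pieces at a given level may vary enormously, which both spoils the ``barely balanced'' denominator and makes $\tilde Z_n$ much smaller than $Z_n$ (recall $\tilde Z_n$ uses the \emph{minimal} derivative ${\un c}_n$). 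To fix this, I would, at each level $n$, choose a subcollection $\hat I^{(n)}\subset I^{(n)}$ on which the derivative norms $\|D\phi_a^{(n)}\|$ all lie within a fixed multiplicative factor of each other (a single dyadic block), chosen so that $\sum_{a\in \hat I^{(n)}}\|D\phi_a^{(n)}\|^t$ is at least a definite fraction of $Z_1^{\Phi^{(n)}}(t):=\sum_{a\in I^{(n)}}\|D\phi_a^{(n)}\|^t$ divided by the number of available dyadic blocks. Since $\limsup(\log\rho_n)/n<\infty$, the number of relevant dyadic blocks grows at most exponentially, so on the subsystem $\Psi$ defined by the $\hat I^{(n)}$ we still have $\liminf \tfrac1n\log Z_n^{\Psi}(t)>0$. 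By construction $\Psi$ is balanced (indeed $\rho_n^{\Psi}$ is bounded), hence barely balanced, and $\tilde Z_n^{\Psi}(t)$ differs from $Z_n^{\Psi}(t)$ only by a factor $(\#\hat I^{(n)})^{1-t/d}$ which is at most exponential; thus $\tilde P^{\Psi}(t)>0$ and Lemma~\ref{lem:barely} gives $\HD(J(\Psi))\geq t>0$, as required.

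For the second implication ($B(\Phi)=d \Rightarrow \HD(J(\Phi))=d$), I use the refined version of Theorem~\ref{thm:lowerbound} noted in the remark at the end of Section~\ref{LBHD}, where $\tilde Z_n(t)$ is replaced by
\[ \hat Z_n(t) := Z_n(t)\cdot \min_{A\subset I^{(n)}}
      \frac{\sum_{a\in A}\bigl(\|D\phi_a^{(n)}\|^d\bigr)^{t/d}}{\sum_{a\in A}\|D\phi_a^{(n)}\|^t}. \]
Here the point is that for $t$ close to $d$ the exponent $d/t$ is close to $1$, so the H\"older-type ratio appearing in the minimum is close to $1$ uniformly; more precisely, writing $\theta=d/t$ and applying the reverse estimate for the ratio $\|\cdot\|_t/\|\cdot\|_{t\theta}$ over a set of size $\#A\leq \#I^{(n)}$, one gets that the minimum is at least $(\#I^{(n)})^{-(1-t/d)}$, and since $\limsup(\log\#I^{(n)})/n<\infty$ this factor decays at most exponentially. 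Consequently $\liminf\tfrac1n\log\hat Z_n(t)\geq \lP(t)$ up to a bounded-rate correction; meanwhile, since $B(\Phi)=d$, for every $t<d$ we have $\lP(t)>0$. Choosing $t<d$ close enough to $d$ that the exponential decay of $(\#I^{(n)})^{-(1-t/d)}$ is dominated by $\lP(t)>0$ (using that $\lP(t)\to$ a positive quantity, or at least stays positive, as $t\uparrow d$), we obtain $\liminf\tfrac1n\log\hat Z_n(t)>0$, and the denominator $1+\log\max_{j\leq n}\rho_j$ grows only subexponentially by the hypothesis $\limsup(\log\rho_n)/n<\infty$. The refined Theorem~\ref{thm:lowerbound} then yields $\HD(J(\Phi))\geq t$, and letting $t\uparrow d$ gives $\HD(J(\Phi))=d$.

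The main obstacle I anticipate is making the subsystem selection in the first part quantitatively honest: one must verify that restricting to a single dyadic block of derivative sizes at each level costs only a subexponential (in fact, given the hypothesis, at most exponential) factor in $Z_n(t)$, and simultaneously that this choice can be made consistently across all levels so that the resulting $\Psi$ is genuinely balanced. The bookkeeping is slightly delicate because one wants a \emph{uniform} bound $\rho_n^{\Psi}\leq\const$, not merely $\rho_n^{\Psi}$ subexponential, so that Lemma~\ref{lem:barely}(2) applies cleanly; this is achievable since within one dyadic block the ratio of derivative norms is bounded by $2$, but care is needed because the \emph{compositions} $\phi_\omega$ also enter $Z_n^{\Psi}$ and one must track how bounded distortion interacts with the restriction. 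The second part is more routine once one commits to using $\hat Z_n$ rather than $\tilde Z_n$, the only subtlety being the elementary but essential observation that the H\"older ratio in $\hat Z_n(t)$ tends to $1$ uniformly as $t\uparrow d$.
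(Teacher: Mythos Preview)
Both parts of your proposal contain genuine gaps.

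\textbf{First implication.} Your dyadic-block subsystem idea does not preserve exponential growth of the partition function. Under the hypothesis $\limsup_n (\log\rho_n)/n<\infty$, the number of dyadic blocks at level $j$ is $O(\log\rho_j)=O(j)$, not merely ``at most exponential'' as you write. Pigeonholing onto a single block at level $j$ therefore costs a factor $O(j)$ in the single-step sum, so over $n$ levels the cumulative loss in $Z_n^{\Psi}(t)$ versus $Z_n^{\Phi}(t)$ is of order $\prod_{j=1}^n O(j)=O(n!)$, i.e.\ $\log Z_n^{\Psi}(t)\geq \log Z_n^{\Phi}(t)-O(n\log n)$. Since $\log Z_n^{\Phi}(t)$ is only guaranteed to grow like $cn$, the conclusion $\liminf\tfrac1n\log Z_n^{\Psi}(t)>0$ is unjustified. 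The paper avoids this entirely: it works directly with the original system (which is automatically barely balanced, since $\limsup(\log\rho_n)/n<\infty$ forces $\log(1+\log\rho_n)/n\to 0$), and shows by an explicit algebraic rearrangement that $\tilde P(t)>0$ for all sufficiently small $t>0$. The key identity is to write, for $0<t<t_0$,
\[
  \log\tilde Z_{n+1}(t) \geq \Bigl(1-\tfrac{t}{t_0}\Bigr)\log Z_n(t_0)
    +\tfrac{t}{d}\log\#I^{(n+1)}
    +\tfrac{t}{t_0}\bigl(\log Z_n(t_0)+t_0\log\un c_n\bigr),
\]
having first checked that $\liminf_n \tfrac1n\bigl(\log Z_n(t_0)+t_0\log\un c_n\bigr)>-\infty$ from the growth hypotheses. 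Letting $t\downarrow 0$ makes the right-hand side approach $\lP(t_0)>0$.

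\textbf{Second implication.} Your use of $\hat Z_n(t)$ gives $\hat Z_n(t)\geq Z_n(t)\,(\#I^{(n)})^{-(1-t/d)}$, hence $\liminf\tfrac1n\log\hat Z_n(t)\geq \lP(t)-(1-t/d)\,\overline a$, and you need this to be positive for $t$ arbitrarily close to $d$. But nothing rules out $\lP(t)\to 0$ at a rate no faster than $(d-t)\overline a/d$ as $t\uparrow d$; your parenthetical ``or at least stays positive'' is exactly the unproven step. The paper's device is different and crucial: fix an arbitrary target $\tilde t<d$, build the measure $m$ using exponent $t\in(\tilde t,d)$, and inside the proof of Theorem~\ref{thm:lowerbound} write $r^t=r^{\tilde t}\cdot r^{t-\tilde t}\leq r^{\tilde t}\,\eta^{n(t-\tilde t)}$ (using $r\leq\const\cdot\eta^n$ from uniform contraction). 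The extra factor $\eta^{n(t-\tilde t)}$ decays at a rate independent of how close $t$ is to $d$, so by choosing $t$ close enough to $d$ it dominates $(\#I^{(n+1)})^{1-t/d}$. One then obtains $m(B)\leq\const\, r^{\tilde t}$ and hence $\HD(J(\Phi))\geq\tilde t$. This decoupling of the measure exponent $t$ from the dimension target $\tilde t$ is the missing idea in your argument.
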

\begin{proof}
For every $t\ge 0$, we again consider
\[ \tilde{ P}(t)=\liminf_{n\to\infty}\frac1n\log\tilde Z_n(t). \]
  Let us also set
\[   \un a=\liminf_{n\to\infty}\frac{\log\#I^{(n)}}{n} \geq 0,\quad\text{and}\quad
    \ov a=\limsup_{n\to\infty}\frac{\log\#I^{(n)}}{n} < \infty. \]

 For the first implication of the theorem, let us suppose that
   $B(\Phi)>0$, and that $\Phi$ is not necessarily weakly bounded, but that
\[  {\ov \rho} := \limsup_{n\to\infty} \frac{\log \rho_n}{n} < \infty. \]
  We must show that $\HD(J(\Phi))>0$. 
    To do fix $0<t_0<B(\Phi)$, so that $\lP(t_0)>0$. Recall that
    \[  
        Z_{n+1}(t_0) 
     \leq 
        Z_n(t_0) \cdot \# I^{(n+1)} \cdot {\ov c}_n^{t_0} 
     =  
        Z_n(t_0) \cdot \# I^{(n+1)}\cdot \rho_n^{t_0}\cdot {\un c}_n^{t_0}. 
    \]
   It follows that
     \[  
         \liminf_{n\to\infty} \frac{\log Z_n(t_0) + t_0\cdot \log {\un c}_n}{n} 
      \geq 
         \lP(t_0) - {\ov a} - t_0\cdot {\ov \rho} 
      > 
         -\infty. \]
%%% LRG - Corrected

  Now let $t\in (0,t_0)$, and observe that
    \begin{align*}
       \log {\tilde Z}_{n+1}(t) 
     &= 
       \log Z_n(t) + \frac{t}{d} \log \# I^{(n+1)} + t\cdot \log {\un c}_n  \\ 
     &> 
       \log Z_n(t_0) + \frac{t}{d} \log \# I^{(n+1)} + t\cdot \log {\un c}_n  \\ 
     &=
       \left(1-\frac{t}{t_0}\right)\cdot \log Z_n(t_0) 
       + 
       \frac{t}{d} \log \# I^{(n+1)} 
       + \frac{t}{t_0}\cdot (\log Z_n(t_0) + t_0\cdot \log {\un c}_n).
     \end{align*}
    Thus
    \[ 
         \tilde{P}(t) 
      \geq 
          \left(1-\frac{t}{t_0}\right) \cdot \lP(t_0) 
         + 
          \frac{t}{d}\cdot \underline{a} +  \frac{t}{t_0} \cdot 
                         (\lP(t_0) - {\ov a} - {\ov \rho}).
     \] %%%LRG - Corrected
   The right hands side tends to $\lP(t_0)$ %%%LRG - Corrected
    as $t\to 0$; in particular,
    $\tilde{P}(t)$ is positive  for sufficiently small positive $t$.
    Hence $\HD(J(\Phi)) \geq t > 0$, as claimed.

 \smallskip

 For the second implication, suppose that $B(\Phi)=d$. %%%LRG - Corrected
Let $\tilde{t}<d$, and choose $t\in(\tilde{t} , d)$ sufficiently close to $d$, as indicated below.
  We recall the proof of Theorem \ref{thm:lowerbound}. Let $m$ be the measure constructed there, let $B$ again
  be a ball of radius $r>0$, and continue the proof analogously up to 
  \eqref{eqn:lowerboundgrowth}:
\[          m\lt(\bigcup_{a\in A_{\om}} X_{\om a}\rt) 
     \leq  \frac{\const\cdot r^t}{Z_{n+1}(t)}\cdot (\# I^{(n+1)})^{1-t/d}. \]
  Recall that $\# I^{(n+1)}$ grows at most exponentially fast. Also recall that $n=|\om|$, where $\om$ was such that
   \[ r \leq \diam(X_{\om}) \leq \const\cdot \|D\phi_{\om}\| \leq \const\cdot \eta^n. \]
   Hence
\[          m\lt(\bigcup_{a\in A_{\om}} X_{\om a}\rt)  \leq
           \frac{\const\cdot r^{\tilde{t}}}{Z_{n+1}(t)} \cdot r^{t-\tilde{t}} \cdot (\# I^{(n+1)})^{1-t/d}
    \leq \frac{\const\cdot r^{\tilde{t}}}{Z_{n+1}(t)} \cdot \eta^{n(t-\tilde{t})} \cdot (\# I^{(n+1)})^{1-t/d}. \]
    If $t$ was chosen sufficiently close to $d$ (depending on $\eta$, the exponential growth rate of $I^{(n+1)}$ and $\tilde{t}$), the second term
    in the product tends to zero faster than the final term tends to infinity. Hence we have, for large $n$,
\[          m\lt(\bigcup_{a\in A_{\om}} X_{\om a}\rt)  \leq
  \frac{\const\cdot r^{\tilde{t}}}{Z_{n+1}(t)}. \] %%%LRG - Corrected
   
  Now let $\tau\in W'$ and $\omega \in W_{\tau}$. Since $\rho_n$ grows at most exponentially, and
   $Z_n(t)$ grows at least exponentially, we see that
    \[ m\lt(\bigcup_{a\in A_{\om}} X_{\om a}\rt)  \leq
  \frac{\const\cdot r^{\tilde{t}}}{Z_{n+1}(t)} \leq
           \frac{\const}{1+\log \rho_{|\tau|+1}}\cdot r^{\tilde{t}}. \]%%%LRG - Corrected
   We now continue as in the proof of Theorem \ref{thm:lowerbound}, and see that 
     $\HD(J(\Phi))\geq \tilde{t}$. Since $\tilde{t}<d$ was arbitrary, we are done.
\end{proof}

\section{Approximation by subsystems}\label{sec:approximation}

 In this section, we shall study the approximation of a
    nonautonomous system $\Phi$
   (with finite or infinite alphabets) by suitably 
   chosen finite subsystems. The purpose of this procedure is two-fold. On the
   one hand, we will be able to remove the balancing 
   assumptions in many of the results of the preceding sections. Indeed,
   essentially we will show that we can always restrict to a subsystem
   that \emph{does} satisfy these assumptions, and whose pressure function
   is close to the original one. In particular, we complete the proof
   of   Theorem~\ref{thm:bowensubexponential} in this section.
On the other hand, the methods developed are crucial for our study
    of infinite systems in the second half of the 
   paper.

%%% LR: In proof, have removed the subscript in the star operation.

 The key result is as follows.
 \begin{prop}[Approximation by finite subsystems]%%%
    \label{p1na20111107}\label{prop:approximation}
 Consider a (possibly infinite) non-autonomous iterated
 functions system $\Phi$. Let $t\in [0,d]$ and assume that 
 \begin{equation} \label{eqn:singlesums}
       \sum_{j\in I^{(n)}} \|D{\phi_j}\|^t < \infty
\end{equation}
   for all $n$. Let $\delta>0$, and, for each $n$, let
   $I^{(n)}_f\subset I^{(n)}$ be finite such that
     \begin{equation} \label{eqn:sumestimate}
         \sum_{j\in I^{(n)}}\|D\phi_j\|^t \leq 
           (1+\delta)\sum_{j\in I_f^{(n)}}\|D\phi_j\|^t 
     \end{equation}
   for all sufficiently large $n$.
  
  If $\Phi_f$ is the system obtained by using 
   $I^{(n)}_f$ as index sets instead of
     $I^{(n)}$, then 
    \[ \lP_f(t) := \lP^{\Phi_f}(t) \geq \lP(t) - \delta\cdot K^{2d}. \]%%%LRG - Corrected the statement (hopefully now correct).
 \end{prop}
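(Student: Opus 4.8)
The plan is to compare the partition sums $Z_n^{\Phi}(t)$ and $Z_n^{\Phi_f}(t)$ directly, exploiting the multiplicative structure of these sums together with the bounded distortion estimate. Writing $S^{(n)} := \sum_{j\in I^{(n)}}\|D\phi_j^{(n)}\|^t$ and $S_f^{(n)} := \sum_{j\in I_f^{(n)}}\|D\phi_j^{(n)}\|^t$, the hypothesis \eqref{eqn:sumestimate} says $S^{(n)} \leq (1+\delta)S_f^{(n)}$ for all large $n$. The main point is that $Z_n(t)$ is comparable, via bounded distortion, to the product $\prod_{j=1}^{n} S^{(j)}$ (and similarly $Z_n^{\Phi_f}(t)$ to $\prod_{j=1}^n S_f^{(j)}$). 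Precisely, using the bounded distortion constant $K$ one shows that for every initial word $\omega = \omega_1\cdots\omega_n$,
\[
  K^{-(n-1)t}\prod_{j=1}^n \|D\phi_{\omega_j}^{(j)}\| \leq \|D\phi_\omega^n\| \leq \prod_{j=1}^n \|D\phi_{\omega_j}^{(j)}\|,
\]
and hence, summing over all words,
\[
  K^{-(n-1)t}\prod_{j=1}^n S^{(j)} \leq Z_n(t) \leq \prod_{j=1}^n S^{(j)}.
\]
Actually I should be more careful: bounded distortion as stated controls $\|D\phi_{\omega_k}\circ\cdots\circ\phi_{\omega_l}\|$ for a single composition, and iterating it to factor an $n$-fold composition into single factors costs a power of $K$. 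Let me instead record the cleaner two-sided bound $Z_n(t) \asymp \prod_{j=1}^n S^{(j)}$ with multiplicative error at most $K^{Ct}$ for an absolute $C$; the precise bookkeeping will determine whether the final loss is $K^{2d}$ or some other explicit power, which matches the statement's tolerance of $K^{2d}$ (recall $t\leq d$).

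With this in hand, the argument is short. For each $j$ (large enough that \eqref{eqn:sumestimate} holds) we have $S^{(j)}/S_f^{(j)} \leq 1 + \delta$, so
\[
  \frac{\prod_{j=1}^n S^{(j)}}{\prod_{j=1}^n S_f^{(j)}} \leq (1+\delta)^n \cdot \prod_{j < N_0} \frac{S^{(j)}}{S_f^{(j)}}
\]
for the finitely many exceptional initial indices $j < N_0$, where the trailing product is a constant $C_0$ independent of $n$. Combining with the comparison between $Z_n$ and the products,
\[
  Z_n^{\Phi_f}(t) \geq K^{-Ct}\prod_{j=1}^n S_f^{(j)} \geq K^{-Ct}C_0^{-1}(1+\delta)^{-n}\prod_{j=1}^n S^{(j)} \geq K^{-2Ct}C_0^{-1}(1+\delta)^{-n} Z_n(t).
\]
Taking $\frac{1}{n}\log$ and letting $n\to\infty$, the constant terms $K^{-2Ct}$ and $C_0^{-1}$ wash out, and we obtain
\[
  \lP_f(t) \geq \lP(t) - \log(1+\delta) \geq \lP(t) - \delta,
\]
using $\log(1+\delta)\leq\delta$. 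Since $\delta \leq \delta\cdot K^{2d}$ as $K\geq 1$, this yields the stated inequality $\lP_f(t)\geq \lP(t) - \delta K^{2d}$ comfortably; the appearance of $K^{2d}$ in the paper's statement is presumably an artifact of a slightly less sharp route through the distortion estimates, and I would either reproduce that route or simply note that the sharper bound $\lP(t)-\delta$ also holds.

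The main obstacle, and the only place requiring genuine care, is the first step: establishing that $Z_n(t)$ is comparable to $\prod_{j=1}^n S^{(j)}$ with a multiplicative constant that is \emph{uniform in $n$} (or at worst grows polynomially, which would be fatal — so it must genuinely be bounded). The subtlety is that naively factoring $\|D\phi_\omega^n\|$ as a product of $n$ single-map derivative norms could cost $K^n$, which would destroy the estimate. The resolution is that bounded distortion is not applied $n$ times to single maps, but is used in the form: $\|D\phi_\omega^n(x)\| \asymp_K \|D\phi_{\omega_1}^{(1)}(x_1)\| \cdots$ where the chain-rule product of scaling factors at a single point is \emph{exactly} $|D\phi_\omega^n(x)|$ with no error, and bounded distortion only converts between $\|\cdot\| = \sup$ and the pointwise value $|\cdot|$, costing one factor of $K$ total, not per step. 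Carefully separating the exact chain-rule identity $|D\phi_\omega^n(x)| = \prod_j |D\phi_{\omega_j}^{(j)}(x_j)|$ from the bounded-distortion passage between $\sup$-norms and point values is what makes the constant uniform, and is the step I would write out in full detail; everything after it is the routine telescoping computation sketched above.
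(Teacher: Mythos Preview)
Your proposed route has a genuine gap at exactly the point you flag as delicate: the claimed uniform comparison $Z_n(t)\asymp\prod_{j=1}^n S^{(j)}$ with a constant independent of $n$ is false for non-affine systems, and your suggested fix does not rescue it. The chain-rule identity $|D\phi_\omega^n(x)|=\prod_j |D\phi_{\omega_j}^{(j)}(x_j)|$ is indeed exact, and one application of bounded distortion relates $\|D\phi_\omega^n\|$ to $|D\phi_\omega^n(x)|$. But the intermediate points $x_j=\phi_{\sigma^j\omega}^{j+1,n}(x)$ depend on the later symbols $\omega_{j+1},\dots,\omega_n$, so $\sum_\omega\prod_j|D\phi_{\omega_j}^{(j)}(x_j(\omega))|^t$ does \emph{not} factor as $\prod_j S^{(j)}$. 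To force factorisation you must replace each $|D\phi_{\omega_j}^{(j)}(x_j)|$ by $\|D\phi_{\omega_j}^{(j)}\|$, and that costs one factor of $K$ per step, i.e.\ $K^{n}$ in total. Carrying this through yields only $\lP_f(t)\geq\lP(t)-\log(1+\delta)-t\log K$, and the extra $t\log K$ term does not vanish as $\delta\to 0$, so the result is useless for the intended applications.

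The paper's proof avoids this by never fully factorising. Instead it classifies each word $\omega\in I^n$ by the set $P\subset\{1,\dots,n\}$ of positions where $\omega_j\in I_f^{(j)}$; if $|P|=s$, the word is split into the maximal $I_f$-blocks (at most $n-s+1$ of them) together with the $n-s$ individual ``bad'' symbols, so distortion is applied only $O(n-s)$ times rather than $n$ times. The crucial point is that each of these $n-s$ distortion factors $K^{2t}$ is paired with a factor $\delta$ coming from the hypothesis on the bad symbols, producing $(\delta K^{2t})^{n-s}$. Summing over all $P$ via the binomial theorem gives $Z_n(t)\leq (1+\delta K^{2t})^n Z_n^{\Phi_f}(t)$, whence $\lP_f(t)\geq\lP(t)-\log(1+\delta K^{2t})\geq\lP(t)-\delta K^{2d}$. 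The $K^{2d}$ in the statement is therefore not slack but records exactly where the distortion cost goes; your sharper bound $\lP(t)-\delta$ holds only in the affine case.
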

 \begin{proof}
  We may assume without loss of generality
   that (\ref{eqn:sumestimate}) holds for all $n$.
   If the system $\Phi$ 
   consists of affine similarities, then the result is
   immediate from the definitions. 
   In the  
   case where the maps are nonlinear, the idea is the same, but
   we need to 
   deal with the distortion constants. To do so, we must introduce
   notation that allows us to decompose a word $\omega$ into those
   symbols that belong to 
   the ``finite'' parts $I^{(n)}_f$ of the index sets, and those that do
   not. Let us fix $n\in\N$ in the following.

  For $n\geq 1$ and $0\leq s \leq n$, 
   let $\cP_s$ be the
   collection of all subsets of $\N_n = \{1,\dots,n\}$ having exactly
   $s$ elements. Given  
   $P\in \cP_s$, we denote by 
   $I^P$ the set of all ``words'' %%%LRG - Corrected
   $(\om_j)_{j\in P}$ with $\om_j\in I^{(j)}$ for all $j\in P$, and similarly
%% $I^P_f$ the set of all ``words'' %%%LRG - Corrected
%%  $(\om_j)_{j\in P}$ with $\om_j\in I^{(j)}_f$ for all $j\in P$, and similarly
 \[ 
I^{P}_f
    := \{ (\om_j)_{j\in P}: \om_j\in I^{(j)}_f\}; \qquad
I^{P}_{\infty} 
    := \{ (\om_j)_{j\in P}: \om_j\in I^{(j)}\setminus I^{(j)}_f\}. \]
%%%LRG - Adjusted this here and below, as before things weren't quite right. Correspondingly adjusted the discussion below.
%%%           (It doesn't really matter here which set the symbols belong to, and we apply it in some different situations).

 Now fix $s\in \{0,\dots,n\}$ and $P\in \cP_s$. 
 We can combine two words $\omega \in I^P$ and $\tau \in
 I^{\N_n\sms P}$ to a conventional word of length $n$, 
 which we denote $\omega \star \tau \in I^n$. More precisely,
\[ (\om \star \tau)_j =
\begin{cases}
  \om_j  
       &\text{if } j\in P \\
  \tau_j
       &\text{if } j\notin P. .
\end{cases}
\] 

Let
 $k\leq n+1-s$ be the number of maximal continuous segments of integers in
 $P$. Then one can think of $\omega \in I^P$ as consisting of 
    $k$ separate words $\omega^1,\dots,\omega^k$, with
    $\omega^i\in I^{m_i,n_i}$ (i.e., $\omega^i$ starts at index $m_i$ and 
    ends at index $n_i$), where $m_i\leq n_i<m_{i+1}$. 
    We abbreviate
     \[ \|D\phi_{\omega}^{(P)}\| := 
           \prod_{i=1}^k
              \|D\phi_{\omega^i}^{m_i,n_i}\|. \]
%%%LRG - Corrected. Also changed notation from \omega_i to \omega^i,
%%%   to avoid with the use of \omega_j above for the j-th symbol in \omega.
For any $\tau\in I^{\N_n\setminus P}$, we have
 \begin{equation}\label{eqn:splitting}
\|D\phi_{\omega\star \tau}\| 
\leq \|D\phi_{\omega}^{(P)}\| \cdot \prod_{j\in \N_n\sms P}\|D\phi^{(j)}_{\tau_j}\| 
\leq K^{k+n-s-1} \cdot\|D\phi_{\omega\star \tau}\|
\leq K^{2(n-s)} \cdot\|D\phi_{\omega\star \tau}\|. 
\end{equation}
Note that here we split the word $\omega\star\tau$ into $k+n-s$ pieces, each of which is one of the words $\omega^i$ or one of the symbols of
   $\tau$. Hence, in the second inequality, we applied the bounded distortion property
   precisely $k+n-s-1$ times, resulting in the stated factor. 
%%% LRG - Corrected the statement and added explanation. Also added equation number.

With these preparations, we can now estimate,
%%% LRG - Added explanations below and split the long inequality. Also added a notation for the sum in question, for simplicity:
for each $P$, the contribution $Z^{(P)}_n(t)$ to the sum $Z_n(t)$ of the words having exactly $s$ entries in $I^P_f$, in the positions prescribed by $P$.
  We begin by splitting each of these words as in~\eqref{eqn:splitting} and rearranging:
\begin{align*}
   Z^{(P)}_n(t) :=  \sum_{(\om,\tau)\in I^P_f\times I^{\N_n\sms P}_{\infty}}\|D\phi_{\om \star \tau}\|^t 
 &\leq 
     \sum_{\om\in I^P_f} \|D\phi_{\omega}^{(P)}\|^t
      \sum_{\tau\in I^{\N_n\sms P}_{\infty}} 
        \prod_{j \in \N_n\sms P} \|D\phi^{(j)}_{\tau_j}\|^t \\ 
&= \sum_{\om\in I^P_f} \|D\phi_{\omega}^{(P)}\|^t
       \prod_{j\in \N_n\sms P} \sum_{a \in I^{(j)}\setminus I^{(j)}_f}
       \|D\phi^{(j)}_{a}\|^t.
\end{align*}
 By assumption, we can estimate the final sum as follows:
  \begin{align*}
   Z^{(P)}_n(t) &\leq \sum_{\om\in I^P_f} \|D\phi_{\omega}^{(P)}\|^t
       \prod_{j\in \N_n\sms P} \delta \sum_{a \in I^{(j)}_f}
       \|D\phi^{(j)}_{a}\|^t    \\ 
  &= \delta^{n-s} \sum_{\om\in I^P_f} \|D\phi_{\omega}^{(P)}\|^t
       \prod_{j\in \N_n\sms P} \sum_{a \in I^{(j)}_f} \|D\phi^{(j)}_{a}\|^t . \end{align*}
  Finally, we apply the second inequality of~\eqref{eqn:splitting} to recombine the words:
\[ Z^{(P)}_n(t) \leq \delta^{(n-s)} K^{2t(n-s)} \sum_{\om \in I^{\N_n}_f}\|D\phi_{\om}\|^t 
   =\delta^{n-s} K^{2t(n-s)} Z_n^{\Phi_f}(t). \]

Thus we can bound the sum $Z_n(t)$, by summing over all possible sets $P$: 
  \begin{align*}
   Z_n(t) &= \sum_{|\om|=n} \|D\phi_{\om}\|^t =
    \sum_{s=0}^n \sum_{P\in\cP_s}
            \sum_{(\om,\tau)\in I^P_f\times I^{\N_n\sms P}_{\infty}}\|D\phi_{\om \star \tau}\|^t
          \\
   &\leq Z_n^{\Phi_f}(t) \cdot 
     \sum_{s=0}^n \sum_{P\in \cP_s}\delta^{n-s} K^{2t(n-s)}  
   = Z_n^{\Phi_f}(t) \cdot 
        \sum_{s=0}^n \binom{n}{s} (\delta K^{2t})^{n-s} \\
  &=(1 + \delta K^{2t})^n \cdot Z_n^{\Phi_f}(t) 
  \leq (1 + \delta K^{2d})^n\cdot Z_n^{\Phi_f}(t). 
\end{align*} %%%LRG - Corrected, and improved some notation.

 So 
 $\lP(t) \leq \lP_f(t) + \log(1+\delta K^{2d}) \leq \lP_f(t) + \delta K^{2d}$
 as claimed. 
\end{proof}

 We can use the preceding result to determine the Hausdorff dimension of
  $J(\Phi)$ whenever, for every $\delta>0$, the subsystem
  $\Phi_f$ can be chosen 
   such that Bowen's formula holds for $\Phi_f$. In particular, the result
   shows that very small pieces are irrelevent for the computation of
   the pressure function. This means that, given a bound on the number of
   pieces at each level, we can always restrict to a system
   satisfying a corresponding balancing condition:

\begin{cor}[Balance from growth restrictions] \label{cor:balance}
  Let $\Phi$ be a finite non-autonomous conformal IFS, and let $t_0>0$. Also
   let $\alpha_n\geq 1$ be any sequence such that $\alpha_n\to\infty$.

  Then 
   there exists a subsystem $\Phi_f$, with index sets
   $I_f^{(n)}\subset I^{(n)}$ of $\Phi$ such that 
   $\lP^{\Phi_f}(t)=\lP^{\Phi}(t)$ for all $t\geq t_0$, and such that 
    \[ \rho_n(\Phi_f) \leq \alpha_n \cdot (\# I^{(n)})^{\frac{1}{t_0}} \]
   for all $n$. Here
      \[ \rho_n(\Phi_f) := 
          \max_{a,b\in I^{(n)}_f} \frac{\|D\phi_a^{(n)}\|}{\|D\phi_b^{(n)}\|}. \] 
\end{cor}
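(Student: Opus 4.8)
\textbf{Proof plan for Corollary~\ref{cor:balance}.}
The plan is to build $\Phi_f$ greedily at each level $n$ by discarding, from $I^{(n)}$, all the maps whose derivative is much smaller than the largest one; Proposition~\ref{prop:approximation} then guarantees that this pruning costs us nothing in the lower pressure, while the crude count $\#I^{(n)}$ controls how large the surviving ratio $\rho_n(\Phi_f)$ can be. Concretely, for each $n$ set $M_n := \max_{a\in I^{(n)}} \|D\phi_a^{(n)}\|$ and define a threshold $\tau_n>0$, to be chosen below, and let
\[ I_f^{(n)} := \bigl\{ a\in I^{(n)} : \|D\phi_a^{(n)}\| \geq \tau_n\bigr\}. \]
This set contains the maximizing index, hence is non-empty and finite, and by construction $\rho_n(\Phi_f)\le M_n/\tau_n$.

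The first step is to see that the discarded mass is negligible for the pressure. Fix $t\ge t_0$. We have
\[ \sum_{a\in I^{(n)}\setminus I^{(n)}_f} \|D\phi_a^{(n)}\|^t
    \leq \tau_n^{t-t_0} \sum_{a\in I^{(n)}} \|D\phi_a^{(n)}\|^{t_0}
    \leq \tau_n^{t-t_0}\cdot \#I^{(n)}\cdot M_n^{t_0}, \]
so that, writing $S_f := \sum_{a\in I^{(n)}_f}\|D\phi_a^{(n)}\|^t \ge M_n^t$, the relative error is at most $\tau_n^{t-t_0}\cdot \#I^{(n)}\cdot M_n^{t_0-t}$. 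Choosing
\[ \tau_n := M_n \cdot \Bigl(\alpha_n\cdot (\#I^{(n)})^{1/t_0}\Bigr)^{-1} \]
makes $\rho_n(\Phi_f)\le M_n/\tau_n \le \alpha_n (\#I^{(n)})^{1/t_0}$, which is exactly the bound required. With this choice the relative error at level $n$ is bounded by
\[ \bigl(\alpha_n (\#I^{(n)})^{1/t_0}\bigr)^{t_0-t}\cdot \#I^{(n)}
   = \alpha_n^{t_0-t}\cdot (\#I^{(n)})^{(t_0-t)/t_0 + 1}
   = \alpha_n^{t_0-t}. \]
Since $t>t_0$ gives $t_0-t<0$ and $\alpha_n\to\infty$, this tends to $0$; and at $t=t_0$ the bound is $1$, but here in fact $I_f^{(n)}=I^{(n)}$ is forced when $\alpha_n\ge 1$ is chosen so the threshold is $\le$ every derivative — more carefully, one just notes the desired conclusion $\lP^{\Phi_f}(t)=\lP^{\Phi}(t)$ is only claimed for $t\ge t_0$, and for $t=t_0$ continuity of $\lP$ together with its value for $t>t_0$ suffices, since $\lP^{\Phi_f}\le \lP^{\Phi}$ always and by Lemma~\ref{lem:monotonicity} both are left-continuous where finite. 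In any case, for every $\delta>0$ we have, for all sufficiently large $n$, $\sum_{a\in I^{(n)}}\|D\phi_a^{(n)}\|^t \le (1+\delta)\sum_{a\in I^{(n)}_f}\|D\phi_a^{(n)}\|^t$.

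The second step is then immediate: Proposition~\ref{prop:approximation} applied with this $\delta$ gives $\lP^{\Phi_f}(t)\ge \lP^{\Phi}(t)-\delta K^{2d}$, and since $\delta>0$ was arbitrary and $\Phi_f$ is a subsystem (so $\lP^{\Phi_f}(t)\le \lP^{\Phi}(t)$ trivially), we conclude $\lP^{\Phi_f}(t)=\lP^{\Phi}(t)$ for each $t>t_0$; the case $t=t_0$ follows by monotonicity/continuity as above (or by simply repeating the estimate with $\alpha_n$ replaced by $2\alpha_n$, which only weakens the $\rho_n$ bound by a bounded factor one can absorb). The bounded-distortion machinery needed to pass from the subexponential sum estimate at each level to a statement about $\lP$ is exactly what Proposition~\ref{prop:approximation} packages, so no new distortion argument is needed here. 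The one point requiring a little care — and the step I would expect to be the main obstacle — is the interplay between the two parameters: we need a \emph{single} subsystem $\Phi_f$ that simultaneously achieves the $\rho_n$ bound (which pins down $\tau_n$, hence $I_f^{(n)}$, once and for all) and has $\lP^{\Phi_f}(t)=\lP^{\Phi}(t)$ for \emph{all} $t\ge t_0$. This is fine because the relative-error bound $\alpha_n^{t_0-t}$ is, for each fixed $t>t_0$, summable-to-$0$ in the sense needed by Proposition~\ref{prop:approximation} (it goes to $0$), uniformly enough in $n$; and the dependence on $t$ only helps, since larger $t$ makes the error smaller. Thus one fixed choice of $I_f^{(n)}$ works across the whole range $t\ge t_0$, completing the proof.
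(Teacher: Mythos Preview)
Your construction of $\Phi_f$ (via the threshold $\tau_n = M_n\cdot(\alpha_n(\#I^{(n)})^{1/t_0})^{-1}$) and the overall strategy (apply Proposition~\ref{prop:approximation}) are exactly what the paper does. However, your estimate of the discarded mass contains an arithmetic slip that actually breaks the argument as written.

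You bound $\|D\phi_a^{(n)}\|^t \le \tau_n^{t-t_0}\|D\phi_a^{(n)}\|^{t_0}\le \tau_n^{t-t_0}M_n^{t_0}$, leading to a relative error of
\[
  \alpha_n^{t_0-t}\cdot (\#I^{(n)})^{(t_0-t)/t_0+1}.
\]
But $(t_0-t)/t_0+1 = 2 - t/t_0$, which is \emph{not} zero; your claimed equality ``$=\alpha_n^{t_0-t}$'' is false. For $t_0\le t<2t_0$ the exponent $2-t/t_0$ is positive, and since $\#I^{(n)}$ is merely finite (not bounded), this expression need not tend to $0$. So the hypothesis~\eqref{eqn:sumestimate} of Proposition~\ref{prop:approximation} is not verified in this range.

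The fix is to skip the detour through the $t_0$-power and use the cruder but sharper bound $\|D\phi_a^{(n)}\|^t<\tau_n^t$ directly for discarded $a$. This gives
\[
  \sum_{a\notin I_f^{(n)}}\|D\phi_a^{(n)}\|^t
   \le \#I^{(n)}\cdot\tau_n^t
   = M_n^t\cdot(\#I^{(n)})^{1-t/t_0}\cdot\alpha_n^{-t}
   \le \alpha_n^{-t}\cdot\sum_{a\in I_f^{(n)}}\|D\phi_a^{(n)}\|^t,
\]
the last step using $t\ge t_0$ (so the power of $\#I^{(n)}$ is $\le 1$) and $M_n^t\le S_f$. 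This is precisely the paper's estimate; it works for all $t\ge t_0$, including $t=t_0$, so your separate (and somewhat shaky) continuity argument for the endpoint becomes unnecessary.
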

\begin{proof}
 Let us set 
     \[
    \eps_n := \overline{c}_n\cdot \alpha_n^{-1}\cdot (\# I^{(n)})^{-\frac{1}{t_0}}
      \] and let $I_f^{(n)}$ consist of all indices
  $a\in I^{(n)}$ for which $\|D\phi_a^{(n)}\|\geq \eps_n$. Then
   $\rho_n(\Phi_f)$ has the required property. 

  Furthermore, for $t\geq t_0$, 
    \begin{align*} \sum_{j\in I^{(n)}\setminus I^{(n)}_f} \|D\phi_j^{(n)}\|^t &\leq
       \# I^{(n)}\cdot \eps_n^t \\ &=
       \overline{c}_n^t \cdot (\# I^{(n)})^{1-\frac{t}{t_0}} \cdot \alpha_n^{-t}
       \leq \alpha_n^{-t}\cdot \sum_{j\in I^{(n)}_f} \|D\phi_j^{(n)}\|^t. \end{align*}
%%%LRG - Corrected
   Since $\alpha_n$ tends to zero, it follows from
     Proposition \ref{prop:approximation} that
    $\lP^{\Phi_f}(t)=\lP^{\Phi}(t)$, as claimed. 
\end{proof}

This allows us to complete the proof of the positive part of Theorem
  \ref{thm:bowensubexponential}. 

\begin{thm}[Bowen's formula for systems of sub-exponential growth]  
\label{thm:bowensubexponential_positive}
Suppose that $\Phi$ is a  
 conformal iterated function system that is
  subexponentially bounded, i.e.
\[ 
\lim_{n\to\infty} \frac1n\log \# I^{(n)}= 0.  
\] 
Then Bowen's Formula holds for $\Phi$, i.e.
\[ \HD(J(\Phi))=B(\Phi). \]
\end{thm}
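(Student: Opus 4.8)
The plan is to derive Theorem~\ref{thm:bowensubexponential_positive} from Corollary~\ref{cor:weaklybalancedbowen} together with the approximation machinery, namely Corollary~\ref{cor:balance} and Proposition~\ref{prop:approximation}. Since $\HD(J(\Phi))\le B(\Phi)$ always holds by Lemma~\ref{lem:HDupperBound}, it suffices to prove the reverse inequality $\HD(J(\Phi))\ge B(\Phi)$. If $B(\Phi)=0$ there is nothing to prove, so fix any $t_0$ with $0<t_0<B(\Phi)$; I will show $\HD(J(\Phi))\ge t_0$, and then let $t_0\uparrow B(\Phi)$.

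The key step is to pass to a subsystem that is weakly balanced while preserving the lower pressure on $[t_0,\infty)$. Apply Corollary~\ref{cor:balance} with this $t_0$ and with a sequence $\alpha_n\to\infty$ chosen to grow slowly, e.g.\ $\alpha_n:=\log(n+2)$, or indeed any sequence with $\alpha_n\to\infty$ and $\tfrac1n\log\alpha_n\to 0$. This produces a finite subsystem $\Phi_f$ with index sets $I_f^{(n)}\subset I^{(n)}$ such that $\lP^{\Phi_f}(t)=\lP^{\Phi}(t)$ for all $t\ge t_0$, and such that
\[
\rho_n(\Phi_f)\ \le\ \alpha_n\cdot(\#I^{(n)})^{1/t_0}
\]
for all $n$. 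I then check the two hypotheses of Corollary~\ref{cor:weaklybalancedbowen} for $\Phi_f$. First, $\Phi_f$ is subexponentially bounded: $\#I_f^{(n)}\le\#I^{(n)}$, and $\Phi$ is subexponentially bounded by assumption. Second, $\Phi_f$ is weakly balanced: taking logarithms in the displayed bound,
\[
\frac1n\log\rho_n(\Phi_f)\ \le\ \frac1n\log\alpha_n+\frac1{t_0}\cdot\frac1n\log\#I^{(n)}\ \longrightarrow\ 0
\]
as $n\to\infty$, since $\tfrac1n\log\alpha_n\to0$ by the choice of $\alpha_n$ and $\tfrac1n\log\#I^{(n)}\to0$ by the sub-exponential growth of $\Phi$.

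Now Corollary~\ref{cor:weaklybalancedbowen} applies to $\Phi_f$, giving $\HD(J(\Phi_f))=B(\Phi_f)$. Since $\lP^{\Phi_f}(t)=\lP^{\Phi}(t)$ for all $t\ge t_0$ and $\lP^{\Phi}(t_0)>0$, we have $\lP^{\Phi_f}(t_0)>0$, hence $B(\Phi_f)\ge t_0$. Combined with the obvious monotonicity $J(\Phi_f)\subset J(\Phi)$, we obtain
\[
\HD(J(\Phi))\ \ge\ \HD(J(\Phi_f))\ =\ B(\Phi_f)\ \ge\ t_0.
\]
Letting $t_0\to B(\Phi)$ yields $\HD(J(\Phi))\ge B(\Phi)$, and with Lemma~\ref{lem:HDupperBound} this completes the proof. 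The only point requiring a little care — the part I expect to be the main (if mild) obstacle — is verifying the weak-balancing bound: one must make sure the exponent $1/t_0$ appearing in Corollary~\ref{cor:balance} does no harm, which is exactly why the factor $(\#I^{(n)})^{1/t_0}$ must be controlled using the sub-exponential growth hypothesis rather than merely finiteness of the alphabets, and why $\alpha_n$ must be taken with $\tfrac1n\log\alpha_n\to0$ rather than an arbitrary sequence tending to infinity.
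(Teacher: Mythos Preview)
Your proof is correct and follows essentially the same route as the paper: reduce to the weakly balanced case via Corollary~\ref{cor:balance} (the paper uses $\alpha_n=n$, which fits your stated criterion $\tfrac1n\log\alpha_n\to 0$), then apply Corollary~\ref{cor:weaklybalancedbowen}. The only cosmetic difference is that the paper observes directly that $B(\Phi_f)=B(\Phi)$ (since the lower pressures agree on $[t_0,\infty)$ and $B(\Phi)>t_0$), whereas you record the weaker $B(\Phi_f)\ge t_0$ and let $t_0\uparrow B(\Phi)$; both are fine.
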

\begin{proof}
  Recall that always $\HD(J(\Phi))\leq B(\Phi)$ by 
   Lemma~\ref{lem:HDupperBound}. 
   If $B(\Phi)=0$, there is nothing to prove, so let us assume that 
    $B(\Phi)>0$, and fix some $t_0\in (0,B(\Phi))$.

   By Corollary \ref{cor:balance}, there is a subsystem $\Phi_f$ of $\Phi$
    such that $\lP^{\Phi_f}(t)=\lP^{\Phi}(t)$ for all $t\geq t_0$, and such that
    \[ \rho_n(\Phi_f) \leq n \cdot (\# I^{(n)})^{\frac{1}{t_0}}. \]
   In particular, $B(\Phi_f)=B(\Phi)$ and $\Phi_f$ is weakly balanced. 
   By Corollary \ref{cor:weaklybalancedbowen}, we see that
      \[ B(\Phi)=B(\Phi_f)=\HD(J(\Phi_f)) \leq \HD(J(\Phi)) \leq B(\Phi).
               \qedhere \]
\end{proof}

Likewise, we can complete the positive part of Theorem
  \ref{thm:bowenexponential}. 

\begin{thm}
  Suppose that $\Phi$ is a non-autonomous conformal iterated function system
   such that
    \[ \limsup_{n\to\infty} \frac{1}{n}\log \# I^{(n)} < \infty. \]
  If $\overline{c}_n\to 0$ as $n\to \infty$, if $\HD(J(\Phi))=0$, or if
   $B(\Phi)=2$, then
   Bowen's formula holds. 
\end{thm}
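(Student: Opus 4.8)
The plan is to complete the positive part of Theorem~\ref{thm:bowenexponential} in exactly the same way that Theorem~\ref{thm:bowensubexponential_positive} was deduced from Corollary~\ref{cor:weaklybalancedbowen}: pass to a finite subsystem satisfying the relevant balancing condition by means of Corollary~\ref{cor:balance}, apply the corresponding preliminary result (Corollary~\ref{cor:bowenexponential} for the case $\overline{c}_n\to 0$, Theorem~\ref{thm:extremal} for the two extremal cases) to that subsystem, and transfer the conclusion back using $J(\Phi_f)\subset J(\Phi)$ together with Lemma~\ref{lem:HDupperBound}.

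As always, $\HD(J(\Phi))\leq B(\Phi)$ by Lemma~\ref{lem:HDupperBound}, so if $B(\Phi)=0$ there is nothing to prove; assume $B(\Phi)>0$ and fix $t_0\in(0,B(\Phi))$ (so in particular $t_0<d$). Since $\Phi$ is a finite system, Corollary~\ref{cor:balance} applies with this $t_0$ and with $\alpha_n:=n$, producing a subsystem $\Phi_f$ with index sets $I_f^{(n)}\subset I^{(n)}$ such that $\lP^{\Phi_f}(t)=\lP^{\Phi}(t)$ for all $t\geq t_0$ and $\rho_n(\Phi_f)\leq n\cdot(\# I^{(n)})^{1/t_0}$ for all $n$. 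From the growth hypothesis $\limsup_n\frac1n\log\# I^{(n)}<\infty$ we obtain $\limsup_n\frac1n\log\# I_f^{(n)}<\infty$ (as $\# I_f^{(n)}\leq\# I^{(n)}$) and also $\limsup_n\frac1n\log\rho_n(\Phi_f)\leq \frac1{t_0}\limsup_n\frac1n\log\# I^{(n)}<\infty$; thus $\Phi_f$ satisfies the hypotheses of both Corollary~\ref{cor:bowenexponential} and Theorem~\ref{thm:extremal}. Moreover, since $t_0<B(\Phi)\leq d$ and the two lower pressure functions agree on $[t_0,\infty)$, monotonicity (Lemma~\ref{lem:monotonicity}) together with the definition of the Bowen dimension gives $B(\Phi_f)=B(\Phi)$, while $J(\Phi_f)\subset J(\Phi)$ gives $\HD(J(\Phi_f))\leq\HD(J(\Phi))$.

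Now treat the three cases. If $\overline{c}_n\to 0$, then $\overline{c}_n(\Phi_f)\leq\overline{c}_n\to 0$, so Corollary~\ref{cor:bowenexponential} applies to $\Phi_f$ and yields $B(\Phi)=B(\Phi_f)=\HD(J(\Phi_f))\leq\HD(J(\Phi))\leq B(\Phi)$, forcing equality throughout. If $\HD(J(\Phi))=0$, then $\HD(J(\Phi_f))=0$ as well; but the first implication of Theorem~\ref{thm:extremal}, applied to $\Phi_f$, asserts (contrapositively) that $B(\Phi_f)>0$ forces $\HD(J(\Phi_f))>0$, contradicting $B(\Phi_f)=B(\Phi)>0$. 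Hence in this case $B(\Phi)=0$ after all, so $\HD(J(\Phi))=0=B(\Phi)$. Finally, if $B(\Phi)=d$, then $B(\Phi_f)=d$, and the second implication of Theorem~\ref{thm:extremal} gives $\HD(J(\Phi_f))=d$, whence $\HD(J(\Phi))=d=B(\Phi)$.

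Once Corollary~\ref{cor:balance} and the balanced versions are in hand, the argument is essentially bookkeeping — verifying that $\Phi_f$ inherits whichever of the three alternative hypotheses $\Phi$ satisfies, and that passing to the subsystem changes neither $B$ nor the sandwiched conclusion $\HD=B$. The one point deserving care, and the only place where the exponential-growth assumption is genuinely used, is that the balancing bound from Corollary~\ref{cor:balance} carries the factor $(\# I^{(n)})^{1/t_0}$, which is harmless (i.e.\ still at most exponential in $n$) precisely because $\limsup_n\frac1n\log\# I^{(n)}<\infty$; this is exactly the feature that breaks down for the super-exponentially growing counterexamples constructed in Section~\ref{sec:counterexamples}.
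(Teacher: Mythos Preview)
Your proof is correct and follows exactly the approach of the paper: pass to a subsystem $\Phi_f$ via Corollary~\ref{cor:balance} so that $\rho_n(\Phi_f)$ grows at most exponentially and $B(\Phi_f)=B(\Phi)$, and then invoke Corollary~\ref{cor:bowenexponential} or Theorem~\ref{thm:extremal}. The paper's own proof is just two sentences to this effect; your version spells out the bookkeeping (the contrapositive structure in the $\HD=0$ case, the verification that $\overline{c}_n(\Phi_f)\le\overline{c}_n$, and the role of the exponential-growth hypothesis in bounding $\rho_n(\Phi_f)$) that the paper leaves implicit.
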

\begin{proof}
  As in the proof of the preceding theorem, there is
   a finite subsystem $\Phi_f$ where $\rho_n(\Phi_f)$ 
   grows at most exponentially, and
   for which $B(\Phi_f)=\Phi_f$. 
   The result follows from Corollary \ref{cor:bowenexponential} resp.\ 
    Theorem \ref{thm:extremal}.    
\end{proof}

\section{Counterexamples to Bowen's Formula} \label{sec:counterexamples}

We now turn to completing the proof of 
  Theorem \ref{thm:bowensubexponential} by giving counterexamples
  that show that Theorem
\ref{thm:bowensubexponential_positive} is best possible. 

 \begin{thm}[Counterexamples to Bowen's formula]\label{thm:counterexamples}
 Let $0 < t_1 < t_2 < d$, and let $\eps>0$. 
  Then there exists a perfectly balanced NCIFS 
  $\Phi$ with $\limsup_{j\to\infty}\frac1j\log\#I^{(j)}\leq \eps$ 
 such that 
$$
\hdim(J(\Phi))=t_1 \  \ \text{{\rm and }} \ B(\Phi)=t_2.
$$
\end{thm}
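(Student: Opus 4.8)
The plan is to construct, for given $0<t_1<t_2<d$ and $\eps>0$, a perfectly balanced system $\Phi$ built from two ingredients that are interleaved along a rapidly increasing sequence of scales $0 = N_0 < N_1 < N_2 < \cdots$. During the ``dense'' stages $N_{2i} < k \leq N_{2i+1}$ we use blocks $\Phi^{(k)}$ consisting of many small similarities — a regular-exponential-growth piece calibrated (via Proposition~\ref{prop:bowenregular}, i.e.\ choosing the number of maps $\#I^{(k)} = p$ and contraction ratio $\lambda^{-1}$ so that $\log p / \log\lambda = t_2$) to have local Bowen dimension $t_2$; the exponential growth rate here is kept $\leq \eps$ by taking $p$ and $\lambda$ small. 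During the ``sparse'' stages $N_{2i+1} < k \leq N_{2i+2}$ we insert blocks with just a \emph{single} map (so $\#I^{(k)}=1$ there), whose contraction ratio is some fixed $\mu<1$; these stages contribute nothing to growth and only shrink pieces. All maps are affine similarities with $\rho_k=1$, so $\Phi$ is perfectly balanced, and the overall growth rate $\limsup \frac1k\log\#I^{(k)}$ is controlled by the dense stages and can be made $\leq\eps$ provided the sparse stages are long enough relative to the dense ones to dilute the average (actually $\limsup$ only sees the dense stages, so we just need the dense-stage rate $\leq\eps$, which is arranged directly).

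Next I would compute $B(\Phi)$ and $\HD(J(\Phi))$ separately. For $B(\Phi)$: along the subsequence $n = N_{2i+1}$ (the ends of dense stages), $Z_n(t)$ is the product of the contributions of all preceding blocks. Each dense block of length $\ell$ contributes a factor $(p\,\lambda^{-\ell t})$ raised to — more precisely $\bigl(\sum_{j\in I^{(k)}}\|D\phi_j\|^t\bigr) = p\lambda^{-\ell t}$ per level, so $\prod$ over a dense stage of length $L$ gives $(p\lambda^{-t})^{L}$, which for $t<t_2$ is exponentially large (since $p\lambda^{-t}>1$ iff $t<t_2$); each sparse block of length $L'$ contributes $\mu^{tL'}$, exponentially small. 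By choosing $N_{i+1}-N_i$ growing fast enough (e.g.\ each dense stage vastly longer than all that came before), the dense stages dominate along $n=N_{2i+1}$, so $\liminf_{n\to\infty}\frac1n\log Z_n(t) \geq 0$ for every $t<t_2$ — hence $\lP(t)\geq 0$, giving $B(\Phi)\geq t_2$; an easy matching upper bound (the dense blocks cannot push past $t_2$, and $t>t_2$ makes every block-factor $<1$) yields $B(\Phi)=t_2$. For $\HD(J(\Phi))$: here the sparse stages do the work. At the ``collapsed'' scale $n=N_{2i+2}$, the natural cover $\mathcal V_n$ has $\prod(\text{dense } p^{L})$ pieces each of diameter $\prod\lambda^{-L}\cdot\prod\mu^{L'}$ times $\diam X$; feeding this into Lemma~\ref{lem:HDuppergeneral} (with $\mathcal U_n=\{X\}$, so $S(\mathcal U_n,t)=\diam(X)^t$, against $Z_{n-1}(t)$ evaluated at the previous collapsed scale) we need $\liminf Z_{N_{2i+2}-1}(t)<\infty$; the long sparse stage preceding $N_{2i+2}$ contributes $\mu^{t L'_{i+1}}$ which, if $L'_{i+1}$ is chosen enormous compared to the accumulated dense growth, beats down the product below any constant whenever $t>t_1$. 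Concretely: after choosing $t_1$, the sparse ratio $\mu$ is fixed once and for all, and the lengths are chosen recursively so that at the end of each sparse stage $Z_n(t_1+\delta)\to 0$ — one picks $L'_{i+1}$ so large that $\mu^{(t_1+\delta)L'_{i+1}}$ dominates everything before. This gives $\HD(J(\Phi))\leq t_1$. The reverse inequality $\HD(J(\Phi))\geq t_1$ comes from Theorem~\ref{thm:lowerbound} (or Lemma~\ref{lem:barely}, since the system is perfectly balanced so $\rho_n\equiv 1$): one checks $\tilde Z_n(t)$ does not go to $0$ for $t<t_1$ along a suitable subsequence, which again amounts to arranging the recursively-chosen lengths so that the shrinking from sparse stages is, for $t<t_1$, overcompensated by the multiplicity gained in dense stages — precisely the opposite balance, available because $t_1<t_2$ leaves room.

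The delicate point — and the main obstacle — is the \textbf{simultaneous} choice of the sequence $(N_i)$: it must make the dense stages dominate $Z_n$ along $n=N_{2i+1}$ (to force $B=t_2$) while making the sparse stages dominate the diameters along $n=N_{2i+2}$ (to force $\HD\le t_1$), \emph{and} make the lower-bound quantity $\tilde Z_n$ large for $t<t_1$ (to force $\HD\ge t_1$). This is a bookkeeping argument: one fixes $t_1$ and $t_2$ first, then the parameters $p,\lambda$ (from $t_2$ and $\eps$) and $\mu$ (from $t_1$, chosen with $\log(1/\mu)$ large enough that a sparse stage of moderate length already pushes local Bowen weight below $t_1$), and only then defines $N_1,N_2,\ldots$ recursively, at each step making the next stage long enough to overwhelm the cumulative effect of all earlier stages in whichever direction is needed. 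I would organize this as an explicit inductive construction with two competing inequalities maintained as invariants. The conformality/bounded-distortion hypotheses are trivially satisfied since all maps are affine similarities on, say, a cube $X\subset\R^d$ with the sub-pieces placed in disjoint sub-cubes (open set condition), and uniform contraction holds with $\eta = \max(\lambda^{-1},\mu)<1$ after noting $p$ maps of ratio $\lambda^{-1}$ fit in a cube iff $p\lambda^{-d}\le 1$, i.e.\ $\lambda^{-1}\le p^{-1/d}$, which is compatible with $\log p/\log\lambda = t_2 < d$. The remaining verifications — open set condition, that the growth rate is $\le\eps$, that $J(\Phi)$ is nonempty — are routine given the construction.
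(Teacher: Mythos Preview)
Your approach has a genuine gap that cannot be repaired without changing the mechanism. You try to force $\HD(J(\Phi))\le t_1$ by invoking Lemma~\ref{lem:HDuppergeneral} with the trivial cover $\mathcal U_n=\{X\}$ along the ends of sparse stages. But with that cover the hypothesis of the lemma reads $\liminf_n Z_{n-1}(t)<\infty$, which is \emph{exactly} the statement $B(\Phi)\le t$. So your ``$\HD\le t_1$'' argument is in fact an argument that $B(\Phi)\le t_1$, in direct contradiction with the claim $B(\Phi)=t_2$. Concretely: a sparse stage consisting of a single map of ratio $\mu$ gives $Z_{N_{2i+2}}(t)=Z_{N_{2i+1}}(t)\cdot \mu^{\,tL'_{i+1}}$; if you make $L'_{i+1}$ large enough that this product stays bounded for $t=t_1+\delta$, then $\liminf_n Z_n(t_1+\delta)<\infty$ along $n=N_{2i+2}$, hence $\lP(t_1+\delta)\le 0$ and $B(\Phi)\le t_1+\delta$. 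No recursive choice of the $N_i$ can satisfy both of your competing invariants at once, because for single-map stages the ``efficient cover after the sparse block'' coincides with the natural level-$N_{2i+2}$ cover.

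The missing idea is that one needs a level at which the natural cover is \emph{strictly worse} than some alternative cover: many very small pieces all clustered inside a much smaller subregion of $X$. The paper does this by leaving $\Psi$ (a periodic system with $B(\Psi)=t_2$) alone except at a sparse sequence of times $n_k$, where it inserts $M_k$ similarities of ratio $\lambda_k/M_k$ whose images tile the interval $[0,\lambda_k]$. Then $\mathcal U_{n_k}=\{[0,\lambda_k]\}$ is a single-element cover with $S(\mathcal U_{n_k},t_1)=\lambda_k^{t_1}$, and one chooses $\lambda_k$ so that $Z^{\Phi}_{n_k-1}(t_1)\,\lambda_k^{t_1}=1$ (forcing $\HD\le t_1$ via Lemma~\ref{lem:HDuppergeneral}), while \emph{independently} choosing $M_k\approx \lambda_k^{-t_2/(1-t_2)}$ so that the level-$n_k$ contribution to $Z_n(t_2)$ is $M_k(\lambda_k/M_k)^{t_2}\approx 1$, leaving $B(\Phi)=t_2$ undisturbed. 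The clustering is precisely what decouples the Hausdorff-sum of the efficient cover from $Z_{n_k}(t)$; your single-map blocks have only one image, so no such decoupling is available.
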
  
\begin{proof}
The idea of the proof is very simple. We begin with a uniformly finite
and balanced system $\Psi$ for which $\lP^{\Psi}(t_2)=\uP^{\Psi}(t_2)=0$,
hence $\hdim(J(\Psi))=t_2$, and $\lP(t_1)=\uP(t_1) <\infty$. We then
modify the system $\Psi$ at a sequence $(n_k)_1^\infty$ of times,
where $n_k$ will grow sufficiently quickly, in such a way that the
modified system $\Phi$ still has 
$B(\Phi)=t_2$, but such that the corresponding sets $X_{n_k}^{\Phi}$ admit 
much more efficient covers in terms of $t_1$-dimensional Hausdorff measure. 

We now provide the details, assuming for simplicity that $d=1$; the
general case is 
completely analogous. Let $X=[0,1]$. The construction begins
 by choosing a suitable perfectly balanced system $\Psi$ that is periodic, i.e. 
$$
\Psi^{(j+m)}=\Psi^{(j)}
$$ 
for all $j\in\N$ and some $m\in\N$. In other words, up to a renormalization
  of time, $\Psi$ is a classical iterated function system. 
  This system is going to be
  chosen such that 
  $\lP^{\Psi}(t_2)=\uP^{\Psi}(t_2)=0$. In order to achieve arbitrarily
  slow exponential growth, $\Psi$ will be
  chosen so that furthermore %%%LRG - changed word order
  the number of words of length $n$ has
  small exponential growth in $n$, which can be 
  achieved by artificially inserting long stages where a single
  contraction by a factor close to one is applied at each step in
  time. To be explicit, let us define such a system $\Psi$ by
\[
  \Psi^{\ell} := \begin{cases}
      \{x\mapsto x/2 ; x\mapsto (x+1)/2 \} &\text{if }\ell\MOD m = 0; \\
      \{x\mapsto \lambda x\}  &\text{otherwise}.\end{cases} \]
Here the integer $m\ge 1$ will be chosen sufficiently large, depending
on $t_1$ and $t_2$ as indicated below, and 
     \[\lambda := 2^{-\frac{1-t_2}{t_2(m-1)}}. \]
Then, for all $n,k\geq 1$ and all $t\geq 0$, we have 
\begin{equation} \label{eqn:boundonsums}
       Z_n^{\Psi}(t) \leq Z_n^{\Psi}(0) =
       2^{\left\lfloor\frac{n}{m}\right\rfloor} \leq 2^{\frac{n}{m}} 
\end{equation}
 and 
\begin{equation} \label{eqn:sumsfort2} 
Z_{k\cdot m}^{\Psi}(t_2) =
     \left( 2\cdot \left(\frac{\lambda^{m-1}}{2}\right)^{t_2}\right)^k
     = \left( \lambda^{t_2\cdot(m-1)} \cdot 2^{1-t_2} \right)^k 
= 
                                                 \left( 2^{t_2-1}\cdot
                                                   2^{1-t_2}\right)^k
                                                 = 1. 
\end{equation} 

\begin{claim}[Claim 1]
We have $\lP^{\Psi}(t_2)=\uP^{\Psi}(t_2)=0$. In particular,
$B(\Psi)=t_2$, and $\Z^{\Psi}_n(t_1)$ grows exponentially. 
\end{claim}
\begin{subproof}
  It follows from \eqref{eqn:sumsfort2} and the very definition of
  $\Psi$ that 
     \[ \lambda^{t_2\cdot(m-1)} \leq Z_{n}^{\Psi}(t_2) \leq 1 \]
  for all $n\in\N$. Hence
     \[ \lim_{n\to\infty} \frac1n\log Z_n^{\Psi}(t_2) = 0, \]
   as claimed.
\end{subproof}

 For all $k\geq 1$, we now inductively define
  positive integers $n_k$ and $M_k$, and a real number
  $\lambda_k\in (0,1)$. These sequences 
   give rise a non-autonomous iterated function systems $\Phi$ as follows. If $j\neq n_k$ for all $k$,
   we set
     \[ \Phi^{(j)} := \Psi^{(j)}, \]
    while
     \[
       \Phi^{(n_k)} := \{x\mapsto \lambda_k x/M_k, x\mapsto \lambda_k(x+1)/M_k, \dots, x\mapsto \lambda_k(x+M_k-1)/M_k\}. \]
  The system $\Phi$ is perfectly balanced; let us use
   $c_n$ to denote the contraction factor ${\un c}_n={\ov c}_n$ at stage
   $n$. Note that $c_{n}=\lambda_k/M_k$ when $n=n_k$ and 
    that $c_n\in \{\lambda,1/2\}$ for all other
    values of $n$.  

  We now specify the inductive construction. The initial value $n_1>m$
  is arbitrary. Let $k\geq 1$ and suppose that
     $n_j$ has been defined for $1\leq j\leq k$, while  
     $\lambda_j$ and $M_j$ have been defined for $1\leq j<k$. Note that
     this determines $Z_{n_k-1}^{\Phi}(t)$ %%%LRG - Inserted '\Phi'
    for all $t$. Then we define
     \[ \lambda_k :=
     \frac{1}{\bigl(Z_{n_k-1}^{\Phi}(t_1)\bigr)^{1/t_1}}\quad\text{and}\quad 
            M_k := \left\lfloor\lambda_k^{-\frac{t_2}{1-t_2}}\right\rfloor. \]
   Finally, we choose $n_{k+1}>n_k$ sufficiently large that 
        \[ \log Z_{n_{k+1}-1}^{\Psi}(t_1)/2 < \log
        Z_{n_{k+1}-1}^{\Phi}(t_1)  < 2\log
        Z_{n_{k+1}-1}^{\Psi}(t_1). \] 
    In particular, $\lambda_k\to 0$ and $M_k\to \infty$. 
    We may also assume that the sequence $n_k$ is chosen so that
    $n_k/k \to \infty$.  

 \begin{claim}[Claim 2]
  We have $\lP^{\Phi}(t_2)=\uP^{\Phi}(t_2)=0$ and $\hdim(J(\Phi))=t_1$.
\end{claim}
\begin{subproof}
We first turn to considering the sum $Z_{n}^{\Phi}(t_2)$. Since the
system is linear, this sum is just the product 
$$
\prod_{j=1}^n c_j^{t_2}\#\Phi^{(j)}.
$$ 
When $j\neq n_k$ for any $k$, the $j$th factor above is the same as
the corresponding contribution in $Z_n^{\Psi}(t_2)$. For $j=n_k$,
     \[ c_{n_k}^{t_2}\cdot \#\Phi^{(n_k)} =
        \frac{\lambda_k^{t_2}}{M_k^{t_2}}\cdot M_k =
        \lambda_k^{t_2}\cdot \left\lfloor
          \lambda_k^{-\frac{t_2}{1-t_2}}\right\rfloor^{1-t_2} 
                         \in  [1-\lambda_k, 1].
\]
It follows that 
     \[Z_n^{\Psi}(t_2)/C^k \leq  Z_n^{\Phi}(t_2) \leq C^{k}\cdot Z_n^{\Psi}(t_2) \]
    for a suitable constant $C>1$, where $k=k(n)$ is the largest
    integer with $n_k\leq n$. So  
      \[ \frac1n|\log Z_n^{\Psi}(t_2) - \log Z_n^{\Phi}(t_2)| \leq
      \log C\cdot \frac{k(n)}{n} \to 0, 
\] 
     and hence $\lP^{\Phi}(t_2) = \uP^{\Phi}(t_2)= 0$ by Claim 1.

To show that $\HD(J(\Phi))\leq t_1$, 
 note that $\U_{n_k} := \{[0,\lambda_k]\}$ covers
 the images of all maps in $\Phi^{(n_k)}$. Furthermore, we have
   \[ Z_{n_k-1}^{\Phi}(t_1)\cdot \lambda_k^{t_1} =1 \]
  by definition. Hence 
  Lemma~\ref{lem:HDuppergeneral} implies that $\HD(J(\Phi))\leq t_1$.

Finally, we consider the modified sums $\tilde Z_n(t_1)$ as defined 
 in \eqref{eqn:Ztilde}. If $n\neq n_k$, then $\tilde Z_n(t_1)$ agrees with
$Z_{n-1}(t_1)$ up to a bounded factor, and $Z_{n-1}(t_1)\to\infty$ as
  $n\to \infty$. (Recall that $B(\Phi)=t_2>t_1$.) 
  For $n=n_k$, we have
 \[ \tilde Z_{n_k}^{\Phi}(t_1)  = Z_{n_k-1}^{\Phi}(t_1)\cdot
     M_k^{t_1}\cdot c_{n_k}^{t_1} = 
                                              Z_{n_k-1}^{\Phi}(t_1)\cdot
                                              \lambda_k^{t_1} = 1. 
\] 
 Hence $\tilde{P}(t_1)=\liminf \log\tilde{Z}_{n}(t_1)/n\geq 0$, and 
  $\HD(J(\Phi))\geq t_1$ by Lemma \ref{lem:barely}.
\end{subproof}
It remains only to estimate the growth of the index sets, which means that we
should estimate the growth of $M_k$ compared to $n_k$. We have 
    \begin{align} \label{eqn:growthestimate}
      \log M_k &\leq 
       \frac{t_2}{1-t_2}|\log \lambda_k| = 
       \frac{t_2}{t_1(1-t_2)} |\log Z_{n_k-1}^{\Phi}(t_1)| \\\notag
     &<
       \log 2 \frac{t_2}{t_1(1-t_2)} |\log Z_{n_k-1}^{\Psi}(t_1)| \\\notag
    &\leq 
       2\log 2 \frac{t_2}{t_1(1-t_2)}\frac{n_k}{m} \leq
       4 \cdot \frac{t_2}{m t_1(1-t_2)}n_k. \end{align}%%%LRG - Corrected
    If $m$ was chosen sufficiently large, we thus have
     $\log M_k \leq \eps\cdot n_k$, as desired. 
 \end{proof}
\begin{remark}
  The set $J(\Phi)$ in the above proof is a ``partial homogeneous Moran set'', 
     in the sense
     defined in \cite{moransetsclasses}, and as such it is well-known how to 
     calculate its Hausdorff dimension; see 
     e.g.\ \cite[Theorem~C]{moransetsclasses}.  
     We decided to include a self-contained proof for the reader's convenience,
     particularly since the notation used in the above reference is 
     different from ours.
\end{remark}

With minor modification, the same construction can also be used to
 show that our results on systems with at most exponential growth
 cannot be extended to any larger growth conditions, 
 completing the proof of Theorem \ref{thm:bowenexponential}. 
\begin{thm}[Counterexamples of superexponential growth]
 Let $(\alpha_n)_{n\in\N}$ be any sequence of positive integers such that
  $\log \alpha_n/n\to \infty$. Then
  there exists a perfectly balanced NCIFS $\Phi$ such that:
  \begin{enumerate}
    \item $\# I^{(n)}\leq \alpha_n$ for all $n$;
    \item ${\ov c}_n\to 0$ as $n\to\infty$;
    \item $\HD(J(\Phi))=0$ and $B(\Phi)=d$.
  \end{enumerate}
\end{thm}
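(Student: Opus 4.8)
The plan is to mimic the construction in the proof of Theorem~\ref{thm:counterexamples} almost verbatim, but with three adjustments: work with a perfectly balanced periodic base system $\Psi$ whose limit set already has Hausdorff dimension $0$ (rather than $t_2$), force the Bowen dimension up to $d$ at the modified stages, and insert vastly more pieces at those stages so that $\#I^{(n)}$ can be made as large as $\alpha_n$ permits. Concretely, take $X=[0,1]^d$ and let $\Psi$ be the periodic system that alternates between a stage subdividing the unit cube into $2^d$ subcubes of half the side length (this stage happens once every $m$ steps) and stages applying a single homothety of a factor $\lambda$ close to $1$; here $m$ is chosen large and $\lambda$ is tuned so that $Z_n^\Psi(d)$ stays bounded above and below, exactly as in \eqref{eqn:boundonsums} and \eqref{eqn:sumsfort2} but with $t_2$ replaced by $d$. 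Then $\lP^\Psi(d)=\uP^\Psi(d)=0$, so $B(\Psi)=d$; and since a single contraction is applied at most of the steps, $Z_n^\Psi(t)\to 0$ for every $t<d$, so $\HD(J(\Psi))\le t$ for all $t>0$, i.e.\ $\HD(J(\Psi))=0$.

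Next I would modify $\Psi$ at a rapidly increasing sequence of times $n_k$ exactly as in the previous proof. At time $n_k$ we replace the map of $\Psi$ by a family of $M_k^d$ homotheties of factor $\lambda_k/M_k$ (the natural subdivision of a cube of side $\lambda_k$ into $M_k^d$ subcubes), where $\lambda_k$ is chosen so that $Z_{n_k-1}^\Phi(d)\cdot\lambda_k^d$ and $Z_{n_k-1}^\Phi(0)\cdot\lambda_k^?$-type quantities are normalized appropriately --- the key point is that the single cube $[0,\lambda_k]^d$ covers the images of all maps of $\Phi^{(n_k)}$, so Lemma~\ref{lem:HDuppergeneral} (applied with the covering $\U_{n_k}=\{[0,\lambda_k]^d\}$ and $t$ arbitrarily small) forces $\HD(J(\Phi))=0$ provided $Z_{n_k-1}^\Phi(t)\cdot\lambda_k^t\to 0$ for every $t>0$, which we arrange by taking $\lambda_k$ small enough. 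At the same time $c_{n_k}^d\cdot\#\Phi^{(n_k)}=(\lambda_k/M_k)^d\cdot M_k^d=\lambda_k^d$ is a bounded factor, so just as in Claim~2 of the previous proof we get $\frac1n|\log Z_n^\Psi(d)-\log Z_n^\Phi(d)|\le (\log C)\,k(n)/n\to 0$, whence $B(\Phi)=B(\Psi)=d$. Property (b), $\overline c_n\to 0$, holds automatically since $\overline c_n=c_n\in\{\lambda,1/2\}$ outside the $n_k$ and $c_{n_k}=\lambda_k/M_k\to 0$.

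Finally I would verify the growth bound $\#I^{(n)}\le\alpha_n$. Outside the times $n_k$ we have $\#I^{(n)}\le 2^d$, which is harmless. At time $n_k$ we have $\#I^{(n_k)}=M_k^d$; since $M_k$ is determined by $\lambda_k$, which in turn is determined by $Z_{n_k-1}^\Phi(t_1)$-type data for a fixed small $t_1$ and hence (as in \eqref{eqn:growthestimate}) grows at most like a fixed multiple of $n_k/m$, we get $\log\#I^{(n_k)}\le C'\,n_k$ for a constant $C'$ independent of the choice of $n_k$. Now one uses the hypothesis $\log\alpha_n/n\to\infty$: choosing $n_k$ increasing rapidly enough, we have $\log\alpha_{n_k}\ge C'\,n_k\ge\log\#I^{(n_k)}$ for all $k$, so (a) holds. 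The main obstacle --- and it is a mild one --- is bookkeeping the inductive normalization so that simultaneously $B(\Phi)=d$ (needs $Z_n^\Phi(d)$ bounded away from $0$ and $\infty$ on a sub-sequence, controlled by the choice of $\lambda_k$ relative to the accumulated product), $\HD(J(\Phi))=0$ (needs $\lambda_k$ small enough that $Z_{n_k-1}^\Phi(t)\lambda_k^t\to 0$ for all $t>0$ simultaneously, which is why we also insert the $\lambda$-stages of $\Psi$ to keep $Z_n^\Phi(t)$ from exploding for $t<d$), and $\#I^{(n_k)}=M_k^d\le\alpha_{n_k}$ (needs $n_{k+1}$ chosen large enough after $\lambda_k,M_k$ are fixed); all three can be arranged because each constraint only pushes $n_{k+1}$ further out, and $\alpha_n$ is superexponential so it eventually dominates.
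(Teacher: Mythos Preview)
Your approach has a structural obstruction that cannot be repaired by bookkeeping. You propose a periodic base system $\Psi$ with $B(\Psi)=d$ and $\HD(J(\Psi))=0$, but a periodic system is (after collapsing the period) autonomous, and Bowen's formula holds for autonomous systems; hence $\HD(J(\Psi))=B(\Psi)$ always. Concretely, tuning $\lambda$ via the analogue of \eqref{eqn:sumsfort2} with $t_2$ replaced by $d$ forces $\lambda\geq 1$, so no admissible contraction exists at the intermediate stages; with any $\lambda<1$ one has $B(\Psi)<d$. Your assertion that $Z_n^\Psi(t)\to 0$ for $t<d$ is the opposite of what $B(\Psi)=d$ means: by monotonicity of pressure, $\lP^\Psi(t)>0$ for $t<d$, so $Z_n^\Psi(t)\to\infty$. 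This is precisely why Theorem~\ref{thm:counterexamples} had to exclude the extremal values $t_1=0$ and $t_2=d$.

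The downstream steps inherit the problem. Your modification gives $c_{n_k}^d\cdot\#\Phi^{(n_k)}=\lambda_k^d$, and since you need $\lambda_k\to 0$ for the covering argument, this factor tends to $0$ rather than staying bounded, so it destroys $Z_n^\Phi(d)$ instead of preserving it. And $\ov c_n\in\{\lambda,1/2\}$ off the $n_k$ certainly does not tend to $0$, so condition~(b) fails. The paper's proof sidesteps all of this with a diagonal construction: for each $\eps>0$ it first invokes Theorem~\ref{thm:counterexamples} with $t_1=\eps$, $t_2=d-\eps$ and a base system whose contraction factor tends to $0$ as $\eps\to 0$, obtaining a system $\Theta(\eps)$ of some finite exponential growth rate. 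It then concatenates the $\Theta(\eps_k)$ over increasingly long blocks $[N_{k-1},N_k)$ with $\eps_k\to 0$. This pushes $B(\Phi)$ to $d$ and $\HD(J(\Phi))$ to $0$ in the limit, gives $\ov c_n\to 0$ by the choice of base systems, and~--- because each $\Theta(\eps_k)$ has only exponential growth while $\alpha_n$ is superexponential~--- allows $\#I^{(n)}\leq\alpha_n$ for all large $n$; the finitely many initial violations are removed by hand.
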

\begin{proof}
%%% LRG - Expanded and slightly changed the proof.
  For $\eps\in(0,d/2)$, 
   set $t_1 = t_1(\eps) := \eps$ and $t_2 := t_2(\eps) := d-\eps$. Also let
   $\Psi=\Psi(\eps)$ be a perfectly balanced autonomous linear system with
   $B(\Psi(\eps))=t_2(\eps)$
   such that the contraction factor of the linear maps in $\Psi$ tends to zero as
   $\eps\to 0$. For example, in dimension one we can take
    \[ \Psi = \{ x\mapsto \rho x , x\mapsto \rho(x+1) , x\mapsto
          \rho(x+L-1) \}, \qquad \rho = L^{-\frac{1}{t_2}}, \qquad L=L(\eps)\to \infty. \]
   
 We can apply the construction from the previous proof
   to $t_1$, $t_2$ and $\Psi$. The result is a nonautonomous system 
    $\Theta(\eps)$ whose Bowen dimension is $t_2$, whose
    limit set has Hausdorff dimension $t_1$, and such that
    the growth of $\Theta(\eps)$ is at most exponential. (According 
    to~\eqref{eqn:growthestimate}, the exponent of growth tends to infinity
    at most like  $\log L/\eps^2$.)
    
  Now let $\eps_k\to 0$. The desired system
   $\Phi$ is obtained by letting $\Phi^{(j)}$ agree with
     $\Theta(\eps_k)^{(j)}$ for $N_{k-1}\leq j < N_k$, where $N_k$ is
   an increasing sequence of natural numbers.
   Since $Z_{n}^{\Theta(\eps_k)}(t)\to \infty$ for all $t<t_2(\eps_k)$, we can
   ensure that $Z_n^{\Phi}(t)\to\infty$ for all $t<d$, and hence $B(\Phi)=d$. Similarly,
   using the same covers as in the proof of Theorem \ref{thm:counterexamples}, we ensure that
   $\HD(J(\Phi)=0$. By choice of $\Psi$, and by construction, ${\ov c}_n\to 0$ as $n\to\infty$.
   
  Finally (again, assuming $N_k$ is chosen to grow sufficiently quickly), we can ensure that
    $\# I^{(n)} \leq \alpha_n$ for all sufficiently large $n$ (since each $\Theta(\eps_k)$ has
    some finite exponential growth rate, but $\alpha_n$ tends to infinity superexponentially).
    By modifying $\Phi$ at finitely many initial stages, we can ensure that the condition is
    satisfied for \emph{all} $n$, as claimed. 
\end{proof}

 To conclude this section, we shall discuss a very natural system
  (of super-exponential growth) where Bowen's formula fails. %%%LRG - holds -> fails !

 This example arise from the study of 
   \emph{continued fraction expansions} of real numbers.
  These expansions can be studied by considering 
    the system of conformal maps 
      \begin{equation}\label{eqn:continuedfraction}
       \phi_n:x\mapsto 1/(n+x) \quad (n\geq 1). 
     \end{equation}
   In order to fit into our framework, where we require uniform contraction,
    we should only allow $n\geq 2$, As we will
    study numbers
    whose continued fraction expansions tend to $\infty$, this is 
    not a serious restriction.

Consider, for some $K\geq 2$ and $\alpha>1$, the index sets
   \begin{equation} \label{eqn:continuedfractionindex}
      I^{(n)} := \{ j\in\N : K^{\alpha^n} \leq j < K^{\alpha^{n+1}}\}. 
   \end{equation}
 The limit set of the associated nonautonomous IFS then consists of those 
   numbers
   $x\in(0,1)$ for which the continued fraction expansion $(a_n)_{n\in\N}$ 
   satisfies $K^{\alpha^n}\leq a_n \leq K^{\alpha^{n+1}}$. 

 \begin{prop}
  For every $\alpha>1$ and $K\geq 2$, the nonautonomous conformal iterated function system    $\Psi$ defined by
     \eqref{eqn:continuedfractionindex} and \eqref{eqn:continuedfraction}
     satisfies 
      \[ \HD(J(\Psi)) = \frac{1}{1+\alpha} < B(\Psi) = \frac{1}{2}. \]
 \end{prop}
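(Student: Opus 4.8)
The strategy is to compute the two derivative-related quantities governing $B(\Psi)$ and to apply Proposition~\ref{prop:bowenregular} for the Bowen dimension, and then to invoke the counterexample philosophy (or rather Theorem~\ref{thm:bowensubexponential}, second part, in the continued-fraction incarnation) to pin down the Hausdorff dimension. More precisely, for the continued-fraction maps $\phi_j(x) = 1/(j+x)$ on $X = [0,1]$ we have $|\phi_j'(x)| = 1/(j+x)^2$, so $\|D\phi_j\| \asymp j^{-2}$ with constants independent of $j$. Since $I^{(n)} = \{j : K^{\alpha^n} \leq j < K^{\alpha^{n+1}}\}$, the minimal and maximal derivative norms at level $n$ are comparable to $K^{-2\alpha^{n+1}}$ and $K^{-2\alpha^{n}}$ respectively, so the system is \emph{not} weakly balanced; $\log \rho_n / \log(1/\underline{c}_n)$ does not tend to zero, and this is exactly why Bowen's formula is allowed to fail here.

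First I would compute $B(\Psi)$ directly from the definition via the product estimate
\[
    Z_{n+1}(t) \asymp Z_n(t) \cdot \#I^{(n+1)} \cdot \bigl(\text{typical }\|D\phi_j\|\bigr)^t,
\]
but because $\rho_n$ is super-exponentially large this crude bound is not tight; instead one should estimate $\sum_{j \in I^{(n)}} \|D\phi_j\|^t \asymp \sum_{K^{\alpha^n} \leq j < K^{\alpha^{n+1}}} j^{-2t}$ exactly. For $t > 1/2$ this sum is dominated by its first term, of order $K^{-2t\alpha^n}$; for $t < 1/2$ it is dominated by its last term, of order $K^{(1-2t)\alpha^{n+1}}$; for $t = 1/2$ it is of order $\alpha^{n+1}\log K - \alpha^n \log K \asymp \alpha^n$. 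Multiplying these single-level sums and taking $\frac1n \log$, one finds that $Z_n(t) \to \infty$ precisely when $t < 1/2$ (the last-term contributions $K^{(1-2t)\alpha^{j+1}}$ grow doubly-exponentially, hence certainly $\frac1n\log Z_n(t) \to \infty$), and $Z_n(t) \to 0$ when $t > 1/2$; at $t = 1/2$ one gets $Z_n(1/2) \asymp \prod_{j=1}^n \alpha^j \to \infty$ as well, but $Z_n(t)\to 0$ for every $t>1/2$, so by Definition~\ref{defn:bowen}, $B(\Psi) = 1/2$.

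Next I would compute $\HD(J(\Psi))$. The upper bound $\HD(J(\Psi)) \leq \frac{1}{1+\alpha}$ follows from Lemma~\ref{lem:HDuppergeneral}: at each level $n$ the images $\phi_j^{(n)}([0,1])$ for $j \in I^{(n)}$ are all contained in a single interval of length comparable to $1/K^{\alpha^n}$ (namely $[0, 1/K^{\alpha^n}]$ up to the left endpoint), so taking $\mathcal{U}_n = \{[0, K^{-\alpha^n}]\}$ and checking that $Z_{n-1}(t)\cdot \diam(\mathcal{U}_n)^t$ stays bounded for $t = \tfrac{1}{1+\alpha}$ — here the exponent bookkeeping uses $\|D\phi_{\omega}\| \asymp \prod_{j=1}^{n-1} a_j^{-2}$ with $a_j \in I^{(j)}$, hence $Z_{n-1}(t) \lesssim \prod_{j=1}^{n-1} K^{(1-2t)\alpha^{j+1}}$, which against $K^{-t\alpha^n}$ needs $(1-2t)\sum_{j\leq n-1}\alpha^{j+1} - t\alpha^n \to -\infty$; since $\sum_{j\leq n-1}\alpha^{j+1} \sim \frac{\alpha^{n+1}}{\alpha-1}$ this is the condition $(1-2t)\frac{\alpha}{\alpha-1} \leq t$, i.e. $t \geq \frac{\alpha}{1+\alpha}$ — wait, this must be reconciled carefully; the correct normalization of the Bowen-type sum for the \emph{nonautonomous} system uses only the last symbol's contribution, and the honest computation gives the threshold $\frac{1}{1+\alpha}$. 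I will need to redo this estimate cleanly. For the lower bound $\HD(J(\Psi)) \geq \frac{1}{1+\alpha}$, the natural route is the modified-sum criterion: apply Theorem~\ref{thm:lowerbound} (or its barely-balanced consequence Lemma~\ref{lem:barely} after first passing to a balanced subsystem via Corollary~\ref{cor:balance} is \emph{not} available since $\#I^{(n)}$ grows super-exponentially, so one genuinely needs Theorem~\ref{thm:lowerbound} with the $\log\rho_n$ denominator), using $\underline{c}_n \asymp K^{-2\alpha^{n+1}}$ and $\#I^{(n)} \asymp K^{\alpha^{n+1}}$, so that $(\#I^{(n)})^{t/d}\underline{c}_n^t \asymp K^{(t - 2t)\alpha^{n+1}} = K^{-t\alpha^{n+1}}$; combined with $Z_{n-1}(t) \asymp K^{(1-2t)\alpha^n}$ one checks $\tilde Z_n(t)$ has $\frac1n\log$ bounded below exactly at $t = \frac{1}{1+\alpha}$, while $\log\max_{j\leq n}\rho_j \asymp \alpha^n$ grows only exponentially, so the quotient in Theorem~\ref{thm:lowerbound} is bounded below and $\HD(J(\Psi)) \geq \frac{1}{1+\alpha}$.

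\textbf{Main obstacle.} The delicate point is the exponent bookkeeping: because $\rho_n$ is doubly-exponentially large, one must be scrupulous about \emph{which} term dominates each sum $\sum_{j\in I^{(n)}}\|D\phi_j\|^t$ and therefore whether $Z_{n-1}(t)$ (an $n-1$-fold product of such sums) is driven by the $K^{-2t\alpha^j}$ factors or the $K^{(1-2t)\alpha^{j+1}}$ factors, and the answer flips at $t = 1/2$; getting the crossover threshold for $\HD$ to come out as $\frac{1}{1+\alpha}$ rather than, say, $\frac{\alpha}{1+\alpha}$ or $\frac12$ requires tracking the geometric-sum asymptotics $\sum_{j=1}^{n}\alpha^j \sim \frac{\alpha^{n+1}}{\alpha-1}$ correctly against the single dominant last-level term. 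This is purely computational but is where an error is most likely; everything else is a direct citation of Lemma~\ref{lem:HDuppergeneral}, Theorem~\ref{thm:lowerbound}, and Definition~\ref{defn:bowen}.
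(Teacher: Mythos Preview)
Your computation of $B(\Psi)=1/2$ and your lower bound $\HD(J(\Psi))\geq 1/(1+\alpha)$ via Theorem~\ref{thm:lowerbound} are both correct and are exactly what the paper does: estimate the level sums $\sum_{j\in I^{(n)}} j^{-2t}$ by integral comparison, multiply them up, and observe that $\log\tilde Z_n(t)$ grows like $\frac{\alpha^{n+1}\log K}{\alpha-1}\bigl(1-t(1+\alpha)\bigr)$ while $1+\log\max_{j\leq n}\rho_j$ grows only exponentially.

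The genuine gap is in your upper bound. Your instinct that something is wrong is correct, but your reassurance that ``the honest computation gives the threshold $\frac{1}{1+\alpha}$'' is not. With the single-interval cover $\mathcal U_n=\{[0,K^{-\alpha^n}]\}$, the exponent in $Z_{n-1}(t)\cdot S(\mathcal U_n,t)$ is, up to constants,
\[
   \alpha^n\left((1-2t)\frac{\alpha}{\alpha-1}-t\right),
\]
and setting the bracket to zero gives $t=\frac{\alpha}{3\alpha-1}$, which is strictly larger than $\frac{1}{1+\alpha}$ for every $\alpha>1$ (the difference is $(\alpha-1)^2/\bigl((3\alpha-1)(1+\alpha)\bigr)>0$). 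So Lemma~\ref{lem:HDuppergeneral} with this natural cover yields only the weaker bound $\HD(J(\Psi))\leq \frac{\alpha}{3\alpha-1}$. The problem is structural: $Z_{n-1}(t)$ is a product of $n-1$ level sums each already of doubly-exponential size, so a single-level ``clustering'' cover cannot beat it down to the sharp threshold; one would need a cover of $J(\Psi_n)$ that itself exploits the nesting over many subsequent levels, which amounts to reproving the upper bound one is after.

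The paper does not attempt this and instead cites \L{}uczak's theorem that the larger set $\{x:a_n\geq K^{\alpha^n}\}$ already has Hausdorff dimension $1/(1+\alpha)$, from which the upper bound for $J(\Psi)$ is immediate by inclusion. You should either invoke \L{}uczak as well, or reproduce his covering argument; the machinery of this paper alone (Lemma~\ref{lem:HDuppergeneral} with the obvious cover) will not close the gap.
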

\begin{sketch}
   (In the following, we shall only indicate the order of magnitude of the quantities that occur. 
     Turning this sketch into a proof with rigorous estimates is a straightforward, but not very enlightening, exercise.)

%%%LRG - Adjusted the text below. 
   We begin by calculating the Bowen dimension. 
     Since  $\|\phi_j'\| =1/j^2$, this means that we should estimate the sum
     \[ \sum_{j=j_1}^{j_2} \|\phi_j'\|^t = \sum_{j=j_1}^{j_2} j^{-2t}, \]
     where $j_1 = \left \lceil K^{\alpha^n}\right \rceil$ and 
     $j_2 = \left\lceil K^{\alpha^{n+1}}\right \rceil-1$.
    For large $n$, the value of this sum can be approximated (up to an error that is
     negligible for our purposes) by the integral 
    \[ \int_{j_1}^{j_2} x^{-2t}dx = \frac{1}{1-2t}(j_2^{1-2t} - j_1^{1-2t}) = \frac{j_2^{1-2t}}{1-2t}\left(1 - \left(\frac{j_1}{j_2}\right)^{1-2t}\right). \]

  So the $j$-th summand in the sum 
     $Z_n(t)$ is on the order of magnitude of
     $K^{\alpha^{j+1}(1-2t)}$. Summing the geometric series in the exponent, $Z_n(t)$ itself
    behaves like
    \[ K^{(1-2t)\cdot \frac{\alpha^{n+2}}{\alpha-1}}. \]%%%LRG - Corrected
  It follows that the pressure is positive infinite for $t<1/2$ and negative infinite for $t>1/2$.
   Hence the Bowen dimension $B(\Psi)$ is equal to $1/2$, as claimed.

   Luczak \cite{luczak} showed that the set of points whose continued fraction 
    expansion
    satisfies $a_n \geq K^{\alpha^n}$ has Hausdorff dimension $1/(1+\alpha)$, so 
     $\HD(J(\Psi)) \leq 1/(1+\alpha)$. 

  Thus it remains to prove that $\HD(J(\Psi)) \geq 1/(1+\alpha)$. We shall use Theorem \ref{thm:lowerbound}. First observe that
       \[ \# I^{(n)}\cdot {\un c}_n \geq \const\cdot K^{-\alpha^{n+1}}, \]
      which means that 
              \[ \log \tilde{Z}_n(t) = \log Z_{n-1}(t) + t\log(\# I^{(n)}\cdot {\un c})\]
       grows like
        \begin{align*}
    \left( \frac{(1-2t)\cdot \alpha^{n+1}}{\alpha-1} - t\alpha^{n+1}\right)\cdot \log K &=
            \frac{\alpha^{n+1}\log K}{\alpha-1}\cdot (1-2t - t\alpha + t) \\
   &= \frac{\alpha^{n+1}\log K}{\alpha-1}\cdot 
                              (1 - t(1+\alpha)). \end{align*}
       For $t<1/(1+\alpha)$, this quantity tends to $+\infty$ exponentially fast, so that $\tilde{Z}_n(t)$ grows superexponentially. 
   On the other hand, 
      \[ \rho_n \leq K^{2\alpha^{n+1}},  \]
      and hence the quantity
        \[ 1 + \log \max_{j\leq n} \rho_j \]
      grows only exponentially with $n$. Hence $\HD(J(\Psi)) \geq 1/(1+\alpha)$ by
       Theorem \ref{thm:lowerbound}.
\end{sketch}

\section{Hausdorff and Packing Measures}\label{hpm}

In this short section, we consider balanced and uniformly finite systems,
  where we can fully charcterize the cases when the
  $h$-dimensional Hausdorff measure $\H_h(J(\Phi))$ is finite, positive
  or infinite, where $h=B(\Phi)=\HD(J(\Phi))$. 
  The answer depends on the value of  $\liminf_{n\to\infty} Z_n(h)$.

\bthm\label{t1na17}
Let $\Phi$ be a balanced and uniformly finite
 non-autonomous conformal iterated function
 system, and let $h=B(\Phi)$. 

Then the $h$-dimensional Hausdorff measure $\H_h(J(\Phi))$ is infinite,
  finite or equal to zero according to whether 
   \[ \liminf_{n\to\infty} Z_n(h) \]
  is infinite, finite or equal to zero, respectively. 
\ethm 
\begin{proof}
The proof of Lemma~\ref{lem:HDuppergeneral} produces a constant $C>0$
such that that
$$
\H_h(J(\Phi)) \leq C\liminf_{n\to\infty} Z_n(h).
$$
Hence we only need to show that $\H_h(J(\Phi))$ is positive when 
 the $\liminf$ above is positive, and infinite when the $\liminf$ is infinite.
 To do so, we note that the proof of 
 Theorem \ref{thm:lowerbound} actually gives that 
 \[ 
m(B)\leq \const\cdot r^h 
\]
for any ball of radius $r$ when the assumptions of that
 theorem are satisfied, i.e. when
    \begin{equation}\label{eqn:lowerboundassumption}
        \liminf_{n\to\infty}\frac{\tilde{Z}_n(h)}{1+\log \max_{j\leq n}\rho_n} 
             \end{equation}
 is positive. 
 Furthermore, if this $\liminf$ is infinite,
 then the proof shows even that
   \[ \frac{m(B)}{r^h} \to 0 \]
  as $r\to 0$. By the Converse Frostman Lemma, this means that
  $\H_h(J(\Phi))$ is positive resp.\ infinite. %%%LRG - Corrected

 By assumption, our system is balanced, so $\rho_n$ is uniformly bounded.
  Furthermore, the system is uniformly finite, and hence
  $\tilde{Z}_n(h)$ differs from $Z_n(h)$ by at most a multiplicative 
  constant. This means that the $\liminf$ in \eqref{eqn:lowerboundassumption}
  is comparable to $\liminf_{n\to\infty} Z_n(h)$, and the theorem is proved. 
\end{proof}

\section{Systems with a Countably Infinite Alphabet}\label{scia}
\label{InfiniteBowen}

 We now turn to the study of non-autonomous systems with infinite
  countable 
  alphabets. For simplicity, we shall assume that all stages are infinite,
  and share the same alphabet. That is, $I^{(n)}=\N$ for all $n\in\N$. 
  The reader may find it helpful to imagine the pieces at level $n$ being 
  labelled in decreasing order of size (i.e.,\ $(\|D\phi_j^{(n)}\|)_{j\in\N}$ 
  is a decreasing sequence), although we will not formally require this.

 One of the technical differences that arise in the infinite 
  case is that some of the sums $Z_n(t)$ may be infinite (for finite $n$),
  This 
  will cause the sums $Z_{n'}(t)$ to be infinite for all $n'\geq n$, and
  hence makes them rather unsuitable for predicting the geometry of the
  limit set at small scales without prior restrictions.
Clearly this happens if and only if one of the individual sums
$$
Z_1^{(m)}(t) = \sum_{j\in I^{(m)}} \|D\phi_j^{(m)}\|^t
$$ 
is infinite for
  $m\leq n$. In practical applications, for fixed $t$ these sums
  will usually be either finite for all $m$ or infinite for all $m$,
  and in the following we shall consider only systems where this is the case.
  As soon as we deal with this technical issue,
  Corollary~\ref{BowenM} and 
  with Corollary~\ref{EVclass} provide us immediately
  with a large class of systems where Bowen's formula holds. 

\

\bdfn\label{d120120707}
We say that an infinite non-autonomus conformal iterated function system
$\Phi=\{\Phi^{(n)}\}_{n=1}^\infty$ belongs to class $\cM$ if the
following conditions are satisfied for all $t\in (0,d)$ and all $\eps>0$.
\begin{itemize}
\item[(a)] The sums $Z_1^{(n)}(t)$ are either infinite for all $n$ 
  or finite for all $n$.
\item[(b)] 
If $Z_1(t)<\infty$, then
  \[ \lim_{n\to\infty} \frac{\sum_{k\leq e^{\eps n}} \|D\phi_k^{(n)}\|^t}%%
                         {Z^{(n)}_1(t)} = 1. \]
\item[(c)] 
If $Z_1(t)=\infty$, then
  \[ \lim_{n\to\infty} \sum_{k\leq e^{\eps n}} \|D\phi_k^{(n)}\|^t = \infty. \]
\end{itemize}%%%LRG - changed notation of the exponent (twice)
\edfn

Of course the key point of this technical definition
  is that, given $\Phi\in \cM$, we can restrict to
  a well-controlled finite subsystem.

\begin{obs}
  If $\Phi\in\cM$, then there is a finite and subexponentially bounded 
   subsystem $\Phi_F$, determined by index sets $I_F^{(n)}\subset\N$, 
   such that $\lP^{\Phi_F}(t) = \lP^{\Phi}(t)$ for all $t>0$.    
\end{obs}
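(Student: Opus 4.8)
The plan is to realise $\Phi_F$ as the system obtained by keeping, at each level $n$, only the $N_n$ pieces of largest derivative norm, for a suitable sequence $N_n\to\infty$ with $\frac1n\log N_n\to0$. We may assume, relabelling the alphabets if necessary, that the pieces at level $n$ are already enumerated so that $\|D\phi_1^{(n)}\|\ge\|D\phi_2^{(n)}\|\ge\cdots$: passing to this size ordering only increases the partial sums $\sum_{k\le N}\|D\phi_k^{(n)}\|^t$, hence only makes the defining conditions of $\cM$ easier to satisfy, and then $I_F^{(n)}=\{1,\dots,N_n\}$. With such a choice $\Phi_F$ is automatically finite and subexponentially bounded, so the only content is the choice of $N_n$ and the identity $\lP^{\Phi_F}(t)=\lP^{\Phi}(t)$. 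Set $t^{\ast}:=\sup\{t>0:Z_1(t)=\infty\}$; by condition~(a), for a fixed $t$ the sum $Z_1^{(n)}(t)$ is infinite for all $n$ when $t<t^{\ast}$ and finite for all $n$ when $t>t^{\ast}$.

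First I would fix $N_n$. Apply conditions (b) and (c) to the countable family of exponents consisting of $t^{\ast}$ together with $t^{\ast}\pm\frac1k$ ($k\in\N$), discarding any that fall outside $(0,d)$: for each such exponent $\tau$ and each $k$, condition (b) (if $Z_1(\tau)<\infty$) gives $m(\tau,k)$ with $\sum_{j\le e^{n/k}}\|D\phi_j^{(n)}\|^{\tau}\ge(1-\tfrac1k)Z_1^{(n)}(\tau)$ for $n\ge m(\tau,k)$, while condition (c) (if $Z_1(\tau)=\infty$) gives $m(\tau,k)$ with $\sum_{j\le e^{n/k}}\|D\phi_j^{(n)}\|^{\tau}\ge k$ for $n\ge m(\tau,k)$. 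A standard diagonalisation over the pairs $(\tau,k)$ then yields a sequence $\eps_n\to0$ such that, setting $N_n:=\lfloor e^{\eps_n n}\rfloor$ and $Z_1^{(n),F}(t):=\sum_{j\in I_F^{(n)}}\|D\phi_j^{(n)}\|^t$, we have for each of the chosen exponents $\tau$ that $Z_1^{(n),F}(\tau)/Z_1^{(n)}(\tau)\to1$ when $Z_1(\tau)<\infty$ and $Z_1^{(n),F}(\tau)\to\infty$ when $Z_1(\tau)=\infty$. Because the pieces are size ordered, at each fixed $n$ the ratio $Z_1^{(n),F}(t)/Z_1^{(n)}(t)$ is nondecreasing in $t$ (increasing $t$ emphasises the largest pieces, which are the retained ones) and the partial sum $Z_1^{(n),F}(t)$ is nonincreasing in $t$; since the chosen exponents accumulate at $t^{\ast}$ from both sides, it follows that for \emph{every} $t>0$ one has $Z_1^{(n),F}(t)\ge(1-\delta_n)Z_1^{(n)}(t)$ with $\delta_n=\delta_n(t)\to0$ when $Z_1(t)<\infty$, and $Z_1^{(n),F}(t)\to\infty$ when $Z_1(t)=\infty$.

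It then remains to pass from these level-by-level estimates to the pressure. Fix $t>0$. If $Z_1(t)<\infty$, I would repeat the computation in the proof of Proposition~\ref{prop:approximation}, but carrying the level-dependent quantities $\delta_j:=\bigl(Z_1^{(j)}(t)-Z_1^{(j),F}(t)\bigr)/Z_1^{(j),F}(t)$ in place of the constant $\delta$; the same splitting and recombination yields $Z_n^{\Phi}(t)\le\bigl(\prod_{j=1}^n(1+\delta_j K^{2d})\bigr)Z_n^{\Phi_F}(t)$, and since $\delta_j\to0$ the Cesàro averages of $\log(1+\delta_j K^{2d})$ tend to $0$, so $\lP^{\Phi}(t)\le\lP^{\Phi_F}(t)$; combined with the trivial inequality $\lP^{\Phi_F}(t)\le\lP^{\Phi}(t)$ (valid because $I_F^{(n)}\subset I^{(n)}$), this gives equality. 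If instead $Z_1(t)=\infty$, then $Z_n^{\Phi}(t)=\infty$ for all $n$, so $\lP^{\Phi}(t)=+\infty$; on the other hand, bounded distortion gives $Z_n^{\Phi_F}(t)\ge K^{-tn}\prod_{j=1}^n Z_1^{(j),F}(t)$, and since the factors tend to infinity the right-hand side grows super-exponentially, whence $\lP^{\Phi_F}(t)=+\infty$ as well. Therefore $\lP^{\Phi_F}(t)=\lP^{\Phi}(t)$ for all $t>0$, which completes the proof.

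I expect the genuine difficulty to be the passage from the countable family of exponents to all of $(0,\infty)$: the lower pressure functions are not known to be continuous, so one cannot simply take limits, and one must instead exploit monotonicity — the monotonicity of $\lP^{\Phi}$ and $\lP^{\Phi_F}$ in $t$ (Lemma~\ref{lem:monotonicity}) and, more crucially, the monotonicity in $t$ of the truncation ratios, which is precisely where the decreasing-size labelling of the alphabets (implicit in the formulation of class $\cM$) is used. Producing a single sequence $N_n$ that works simultaneously for all relevant exponents, rather than one per exponent, is the step that requires care.
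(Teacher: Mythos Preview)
Your proof is correct and follows essentially the same approach as the paper: size-order the alphabets, diagonalise over a countable family of exponents accumulating at the threshold to produce a single subexponential truncation sequence, use monotonicity in $t$ of the truncation ratio (which is exactly where the size ordering enters) to pass from the countable family to all $t$, and invoke Proposition~\ref{prop:approximation} for the finite-sum case while handling the infinite-sum case directly via bounded distortion. One minor simplification: you need not rerun the computation of Proposition~\ref{prop:approximation} with level-dependent $\delta_j$, since the proposition as stated only requires~\eqref{eqn:sumestimate} for sufficiently large $n$ --- just apply it for each fixed $\delta>0$ and let $\delta\to0$, which is what the paper does.
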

\begin{proof}
  First observe that, for each $n$, we may reorder the sequence
    $(\phi_k^{(n)})_{k\in\N}$ without affecting the limit set of $\Phi$, the
    pressure function, or the value $Z_1^{(n)}(t)$ (for all $t$). Among all possible such
    reorderings, the sum
    $\sum_{k\leq e^{\eps n}} \|D\phi_k\|^t$ will be minimized if 
   the sequence $(\|D\phi_k^{(n)}\|)_{k\in\N}$ is nonincreasing. Hence we may assume in the
   following that each $\Phi^{(n)}$ is ordered in this manner. 

 Let $T$ denote the interval of positive values of $t$ for which $Z_1(t)<\infty$. 
  Let $t\in T$. Then, by letting $\eps$ tend to zero in (b)
  and diagonalizing, there is a sequence
  $(\kappa_n)_{n\in\N}$ of positive integers such that $\kappa_n=\kappa_n(t)$ grows at most
   subexponentially in $n$  %%%LRG - added dependence on $t$, and emphasized growth in $n$
   and such that
    \begin{equation}\label{eqn:choiceofkappa}
     \lim_{n\to\infty} 
         \frac{\sum_{k>\kappa_n(t)} \|D\phi_k^{(n)}\|^{\tau}}%
         {\sum_{k\leq \kappa_n(t)} \|D\phi_k^{(n)}\|^{\tau}} = 0  \end{equation}%%
%%% LRG - Changed t to \tau in the exponent, to improve explanation. 
   for $\tau=t$. 
  Since the sequence $\|D\phi_k^{(n)}\|$ is nonincreasing in $k$, we automatically 
   obtain~\eqref{eqn:choiceofkappa} also for all $\tau>t$. Indeed, we can write
    \[ \frac{\sum_{k>\kappa_n(t)} \|D\phi_k^{(n)}\|^{\tau}}%
         {\sum_{k\leq \kappa_n(t)} \|D\phi_k^{(n)}\|^{\tau}} =
       \frac{\sum_{k>\kappa_n(t)} \left(\frac{\|D\phi_k^{(n)}\|}{\|D\phi_{\kappa_n(t)}\|}\right)^{\tau}}%
         {\sum_{k\leq \kappa_n(t)} \left(\frac{\|D\phi_k^{(n)}\|}{\|D\phi_{\kappa_n(t)}\|}\right)^{\tau}}. \]
  and the enumerator of the expression on the right-hand side is
   decreasing in $\tau$, while the denominator is increasing. 

 Letting $t$ tend towards the lower endpoint of $T$ and applying another diagonalization, 
   we can choose a sequence
   $\kappa_n$, independently of $t$, such that~\eqref{eqn:choiceofkappa} holds for all 
   $\tau\in T$.
%%%LRG - Detailed the argument.

  Likewise, we can choose a sequence $K_n$ such that 
    \[ \lim_{n\to\infty} \sum_{k\leq K_n} \|D\phi_k^{(n)}\|^t = \infty \]
   for all $t\in (0,d)\setminus T$, and such that $K_n$ grows at most
   subexponentially.

  Now set $I^{(n)}_F := \{k\in\N: k\leq \max(\kappa_n, K_n)\}$.  
   Then $\lP^{\Phi_F}(t) = \lP^{(\Phi)}(t)$ for all $t\in T$ by
   Proposition~\ref{p1na20111107}. On the other hand, we have
   $\lP^{\Phi_f}(t)=\infty=\lP^{\Phi}(t)$ for all $t\notin T$. 
\end{proof}

\

\begin{cor}\label{BowenM}
  Bowen's formula holds for all $\Phi \in \cM$.
\end{cor}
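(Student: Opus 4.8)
The plan is to reduce Bowen's formula for $\Phi\in\cM$ to the finite, subexponentially bounded case, which has already been settled in Theorem~\ref{thm:bowensubexponential_positive}, by passing to the finite subsystem produced in the Observation immediately preceding this corollary.

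First I would apply that Observation to obtain a finite, subexponentially bounded subsystem $\Phi_F$ of $\Phi$, with index sets $I_F^{(n)}\subset\N$, such that $\lP^{\Phi_F}(t)=\lP^{\Phi}(t)$ for every $t>0$. Since the Bowen dimension of a system is, by Definition~\ref{defn:bowen}, determined purely as a threshold of its lower pressure function, this equality of lower pressures gives $B(\Phi_F)=B(\Phi)$ at once. Theorem~\ref{thm:bowensubexponential_positive}, applied to the finite subexponential system $\Phi_F$, then yields $\HD(J(\Phi_F))=B(\Phi_F)=B(\Phi)$. As $I_F^{(n)}\subset I^{(n)}$ for all $n$, we have $J(\Phi_F)\subset J(\Phi)$, so monotonicity of Hausdorff dimension gives
\[ B(\Phi)=B(\Phi_F)=\HD(J(\Phi_F))\le\HD(J(\Phi)), \]
which is the nontrivial inequality in Bowen's formula.

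For the reverse inequality $\HD(J(\Phi))\le B(\Phi)$ I would argue exactly as in the finite case. If $B(\Phi)=d$ there is nothing to prove, so I assume $B(\Phi)<d$ and fix $t$ with $B(\Phi)<t<d$, so that $\lP^{\Phi}(t)<0$. Then $Z_n(t)<\infty$ for all $n$: a sum $Z_m(t)$ equal to $\infty$ would force $Z_n(t)=\infty$ for all $n\ge m$, hence $\lP^{\Phi}(t)=+\infty$; this is also precisely the dichotomy recorded in condition~(a) of the definition of $\cM$. Consequently Lemma~\ref{lem:HDupperBound} applies unchanged and gives $\HD(J(\Phi))\le t$; letting $t\downarrow B(\Phi)$ completes the proof.

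I do not anticipate a serious obstacle here: the substantive content has already been established in Proposition~\ref{p1na20111107} and the Observation (which together extract the finite approximating subsystem from the $\cM$-hypotheses) and in Theorem~\ref{thm:bowensubexponential_positive}. The only step calling for a moment's attention is verifying that the elementary half of Bowen's formula, $\HD(J(\Phi))\le B(\Phi)$, which was stated for finite systems, remains valid for $\Phi\in\cM$; this holds because class $\cM$ is designed precisely so that the sums $Z_n(t)$ are finite for $t$ just above $B(\Phi)$, which is all that the proof of Lemma~\ref{lem:HDupperBound} requires.
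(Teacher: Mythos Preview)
Your proposal is correct and follows essentially the same route as the paper: pass to the finite subexponentially bounded subsystem $\Phi_F$ furnished by the Observation, apply Theorem~\ref{thm:bowensubexponential_positive} to it, and sandwich $\HD(J(\Phi))$ between $\HD(J(\Phi_F))$ and $B(\Phi)$. The only difference is cosmetic: the paper invokes Lemma~\ref{lem:HDupperBound} directly for the upper bound (it was stated for arbitrary NCIFS, not just finite ones), whereas you spell out why the sums $Z_n(t)$ are finite for $t>B(\Phi)$, which is a reasonable point to make explicit.
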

\begin{proof}
  By Theorem \ref{thm:bowensubexponential}, Bowen's formula holds 
   for the subsystem constructed in the previous observation. 
   So 
     \[ \HD(J(\Phi_F))\leq \HD(J(\Phi)) \leq B(\Phi)= B(\Phi_F) = 
        \HD(J(\Phi_F)).\qedhere \]
\end{proof}

 The requirements of Definition~\ref{d120120707} are technical and
  look somewhat awkward. Hence we shall now define a subclass
  of $\cM$ whose definition is easy to define and verify. 
  (In particular, this class contains all infinite autonomous 
   conformal iterated
  function systems.) This class will be used in
  Section~\ref{sec:rcifs} to establish Bowen's formula for random conformal
  iterated function systems. 

\

 \begin{defn}
  We say that an infinite non-autonomous system $\Phi$ is 
   \emph{evenly varying} if there is a sequence 
   $(\gamma_i)_{i\in\N}$ of positive real numbers and a constant $c\geq 1$
    such that 
    \[ \frac{\gamma_i}{c} \leq \|D\phi_i^{(n)}\| \leq
        c\cdot \gamma_i. \]%%%LRG - Fixed definition

   The class of all evenly varying systems is denoted $\EV$. 
 \end{defn}

\

\begin{prop}
  $\EV\subset \cM$.
\end{prop}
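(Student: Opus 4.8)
The plan is to verify that an evenly varying system $\Phi$ satisfies the three conditions (a), (b), (c) of Definition~\ref{d120120707}, which by definition of $\cM$ is all that is needed. The key observation is that for an evenly varying system, the quantity $Z_1^{(n)}(t) = \sum_{i\in\N}\|D\phi_i^{(n)}\|^t$ is comparable, uniformly in $n$, to the single fixed series $\Gamma(t) := \sum_{i\in\N}\gamma_i^t$, since $\gamma_i^t/c^t \le \|D\phi_i^{(n)}\|^t \le c^t\gamma_i^t$ for every $n$ and $i$. In particular, $Z_1^{(n)}(t)$ is finite for all $n$ precisely when $\Gamma(t)<\infty$, and infinite for all $n$ precisely when $\Gamma(t)=\infty$; this immediately gives condition (a), with $T$ (the set of $t>0$ for which $Z_1(t)<\infty$) equal to $\{t>0: \Gamma(t)<\infty\}$, which is an interval of the form $(\theta,\infty)$ or $[\theta,\infty)$ for some $\theta\in[0,d]$ by monotonicity.

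Next I would handle condition (c), the case $\Gamma(t)=\infty$. Fix such a $t$ and $\eps>0$. Since the ``head'' of the series is comparable to a fixed divergent series, we have
\[
\sum_{k\le e^{\eps n}}\|D\phi_k^{(n)}\|^t \ge c^{-t}\sum_{k\le e^{\eps n}}\gamma_k^t.
\]
Wait---this requires knowing that the indexing of the $\gamma_i$ is arranged so that the partial sums $\sum_{k\le N}\gamma_k^t$ exhaust $\Gamma(t)$; but as observed in the proof of the Observation preceding Corollary~\ref{BowenM}, we are free to reorder each $\Phi^{(n)}$ without changing anything relevant, so we may assume $(\|D\phi_k^{(n)}\|)_k$ is nonincreasing in $k$, and correspondingly reorder so that $(\gamma_k)_k$ is nonincreasing. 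Then $\sum_{k\le N}\gamma_k^t \to \Gamma(t)=\infty$ as $N\to\infty$, and since $e^{\eps n}\to\infty$, the right-hand side above tends to $\infty$. This establishes (c).

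The remaining point is condition (b), the case $\Gamma(t)<\infty$, which I expect to be the only place requiring a small argument rather than a one-line estimate. Fix $t\in T$ and $\eps>0$. We need
\[
\frac{\sum_{k\le e^{\eps n}}\|D\phi_k^{(n)}\|^t}{Z_1^{(n)}(t)} \to 1,
\]
equivalently $\sum_{k>e^{\eps n}}\|D\phi_k^{(n)}\|^t / Z_1^{(n)}(t) \to 0$. Using the two-sided bound, the numerator is at most $c^t\sum_{k>e^{\eps n}}\gamma_k^t$, while the denominator is at least $c^{-t}\Gamma(t)$, so the ratio is at most $c^{2t}\cdot \Gamma(t)^{-1}\sum_{k>e^{\eps n}}\gamma_k^t$. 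Since $\Gamma(t)<\infty$, the tail $\sum_{k>N}\gamma_k^t\to 0$ as $N\to\infty$, and $e^{\eps n}\to\infty$, so this bound tends to $0$. Hence (b) holds. This completes the verification that $\Phi\in\cM$, and therefore $\EV\subset\cM$. The main (minor) obstacle is simply making sure the reordering normalization is invoked so that the partial sums of $\gamma_k^t$ genuinely approximate $\Gamma(t)$; once that is in place everything reduces to the elementary fact that tails of a convergent series vanish and partial sums of a divergent series diverge, transported across the uniform comparison constant $c$.
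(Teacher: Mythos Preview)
Your proof is correct and follows essentially the same approach as the paper's: both verify conditions (a), (b), (c) of Definition~\ref{d120120707} directly from the uniform two-sided bound $\gamma_i/c \le \|D\phi_i^{(n)}\| \le c\gamma_i$, reducing everything to the behaviour of the single fixed series $\sum_i \gamma_i^t$. Your digression about reordering is unnecessary---for a series of nonnegative terms, the partial sums $\sum_{k\le N}\gamma_k^t$ always tend to $\Gamma(t)$ and the tails $\sum_{k>N}\gamma_k^t$ always vanish (or diverge, in the infinite case) regardless of enumeration---but it does no harm.
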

\begin{proof}
 Let $\phi\in \EV$, and let $(\gamma_i)$ and $c$ be as above.
   By definition, $Z_1^{(n)}(t)$ is proportional to
   $\sum_{i} \gamma_i^t$ for all $n$ and $t$. Hence, for fixed  $t$,
   these sums are either 
   finite for all $n$ or infinite for all $n$, 
   depending on whether $(\gamma_i^t)$ is summable. 

  Suppose that $\sum_i \gamma_i^t < \infty$, and let $m,n\in\N$. Then 
    \[ \frac{\sum_{i=1}^{m} \|D\phi_i^{(n)}\|^t}{
        Z_1^{(n)}(t)} \geq 1 - \frac{c^2}{\gamma_1^t}\cdot \sum_{i=m+1}^{\infty} \gamma_i^t. 
    \]
   So this quotient tends to $1$ uniformly in $n$ as $m\to\infty$,
    which proves condition (b) of Definition~\ref{d120120707}.

   If $\sum_i\gamma_i^t = \infty$, then likewise
    \[ \sum_{i=1}^m \|D\phi_i^{(n)}\|^t \geq 
         \frac{1}{c} \sum_{i=1}^m \phi_i^t \]
   tends to infinity uniformly in $n$ as $\m\to\infty$, proving
   condition (c) of Definition~\ref{d120120707}.
\end{proof}

\begin{cor}\label{EVclass}
  Bowen's formula holds for all $\Phi\in\EV$. 
\end{cor}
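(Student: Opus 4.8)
The plan is essentially to invoke the two results immediately preceding this corollary. We have just shown, in the Proposition above, that every evenly varying system lies in the class $\cM$, i.e.\ $\EV \subset \cM$. On the other hand, Corollary~\ref{BowenM} establishes that Bowen's formula $\HD(J(\Phi)) = B(\Phi)$ holds for every $\Phi \in \cM$. Chaining these two inclusions gives the statement with no further work.

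Concretely, I would write: let $\Phi \in \EV$ be arbitrary. By the preceding Proposition, $\Phi \in \cM$. By Corollary~\ref{BowenM}, Bowen's formula holds for $\Phi$, i.e.\ $\HD(J(\Phi)) = B(\Phi)$. Since $\Phi$ was an arbitrary element of $\EV$, this proves the corollary.

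There is no genuine obstacle here, since this is a direct consequence of results already proved. If one wanted to make the corollary self-contained (rather than a one-line deduction), the only ``work'' would be to recall why $\EV \subset \cM$ --- namely that the comparability $\gamma_i/c \le \|D\phi_i^{(n)}\| \le c\,\gamma_i$ makes each tail sum $\sum_{i>m}\|D\phi_i^{(n)}\|^t$ uniformly (in $n$) dominated by $c^2 \sum_{i>m}\gamma_i^t$, forcing the uniform convergence in conditions (b) and (c) of Definition~\ref{d120120707}, and that summability of $(\gamma_i^t)$ does not depend on $n$, which gives condition (a). But all of this is precisely the content of the Proposition just established, so it need not be repeated: a two-line proof citing the Proposition and Corollary~\ref{BowenM} suffices.
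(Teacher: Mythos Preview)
Your proposal is correct and matches the paper's approach exactly: the paper states this corollary without proof, since it is an immediate consequence of the preceding Proposition ($\EV\subset\cM$) together with Corollary~\ref{BowenM}.
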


\newcommand{\EP}{{\mathcal E}P}

\section{Random Conformal Iterated Function Systems}\label{sec:rcifs}

 In this section, we briefly discuss the connection of our results
  with the study of \emph{random conformal iterated function systems},
  as developed in \cite{RoyU}. Apart from the main results of the theory,
  the reader may find there an extended discussion of random systems, a number
  of examples, and a large literature. 
  Our goal is to reprove, generalize,
  and strengthen the main results regarding the
  Hausdorff dimensions of limit sets of these systems.

 A random CIFS (on a set $X\subset \R^d$ as in Section \ref{sec:definitions})
  consists of
  \begin{itemize}
    \item an invertible ergodic transformation 
       $T:\La\to\La$ preserving a probability measure $m$ on a 
       measure space $\La$;
    \item an alphabet $I$ (finite or countably infinite);
    \item for every $\lambda\in \La$, an autonomous conformal iterated function
       system 
      $\Psi_\lam=\{\phi_i^\lam:i\in I\}$ on such that the maps 
      $\lambda\mapsto\varphi_i^\lambda(x)$ are measurable for every $x\in X$. 
 \end{itemize} 

\sp\fr Thus for every $\lambda\in \La$, there is an associated non-autonomous system 
$$
\Phi_{\lambda}=\{\Phi_\lambda^{(n)}\}_{n=1}^\infty,
$$
where
$$
\Phi_{\lambda}^{(n)}=\{\phi_i^{T^n(\lambda)}:i\in I\}.
$$
Since $T$ is ergodic and $\lP^{\Phi_{\lambda}(t)} = \lP^{\Phi_{T(\lambda)}(t)}$,
  we have that $\lP^{\Phi_{\lambda}}(t)$ is constant for almost all
  $\lambda\in\La$. In  \cite{RoyU} this constant was denoted by $\EP(t)$ and was
  characterized by a formula which justifies its name -  the
  \emph{expected pressure}. 

 We are primarily interested in the limit set $J(\Phi_{\lam})$, and particularly
   its Hausdorff dimension, for almost every $\lambda\in \Lambda$. 
To state our main result, let us define, for every $i\in I$,
\[
\un M_i:=\ess\inf\{||D\phi_i^{\lam}||:\lam\in\La\}\quad\text{and}\quad
\ov M_i:=\ess\sup\{||D\phi_i^{\lam}||:\lam\in\La\}. \]

\begin{thm} 
  Let $\Phi$ be a random conformal iterated function system. If
   the alphabet $I$ is infinite, assume that
    \begin{equation}\label{120120709}
       \sup_{i\in I} \frac{\ov{M}_i}{\un{M}_i} < \infty. \end{equation}

  Then 
\[ \HD\(J\(\Phi_{\lambda}\)\)=\inf\{t\ge 0:{\mathcal E}P(t)\le 0\}, \]
   for almost every $\lambda\in\Lambda$,
   where $\EP(t)$, the \emph{expected pressure}, is the expected value of
   $\lP^{\Phi_{\lambda}}(t)$. 
\end{thm}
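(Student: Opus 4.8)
The plan is to combine the two Bowen-formula results already established --- Theorem~\ref{thm:bowensubexponential} for subexponentially bounded finite systems, and Corollary~\ref{EVclass} for evenly varying infinite systems --- with the elementary fact, noted just before the statement, that for each \emph{fixed} $t$ one has $\lP^{\Phi_\lambda}(t)=\EP(t)$ for $m$-almost every $\lambda$. Accordingly the argument splits into three parts: (a)~for a.e.\ $\lambda$, the system $\Phi_\lambda$ is a non-autonomous conformal IFS to which one of these theorems applies, so that $\hdim(J(\Phi_\lambda))=B(\Phi_\lambda)$; (b)~for a.e.\ $\lambda$, one has $B(\Phi_\lambda)=\inf\{t\ge 0:\EP(t)\le 0\}$; and (c)~the intersection of the two full-measure sets yields the theorem.

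For part~(a): each $\Phi_\lambda^{(n)}=\{\phi_i^{T^n\lambda}\}_{i\in I}$ inherits the open set condition, a common conformality domain $V$, a bounded distortion constant $K$, and a uniform contraction constant $\eta$ from the standing uniformity hypotheses built into the notion of a random CIFS, so $\Phi_\lambda$ is a genuine NCIFS. In the finite-alphabet case $\# I^{(n)}=\# I$ is constant, so $\Phi_\lambda$ is uniformly finite and in particular subexponentially bounded, and Theorem~\ref{thm:bowensubexponential} applies. In the infinite-alphabet case, hypothesis~\eqref{120120709} is exactly what is needed to place $\Phi_\lambda$ in the class $\EV$: set $\gamma_i:=\ov M_i$ and $c:=\sup_{i\in I}\ov M_i/\un M_i<\infty$. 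For each $i$ the set $\{\lambda:\|D\phi_i^\lambda\|\notin[\un M_i,\ov M_i]\}$ is $m$-null, so the union over $i\in I$ of these sets together with all of their preimages under the iterates of $T$ is again $m$-null, because $m$ is $T$-invariant and $I$ is countable. Off this null set one has $\gamma_i/c\le\un M_i\le\|D\phi_i^{T^n\lambda}\|\le\ov M_i=\gamma_i$ for every $i$ and every $n$, so $\Phi_\lambda\in\EV$ and Corollary~\ref{EVclass} applies. In either case $\hdim(J(\Phi_\lambda))=B(\Phi_\lambda)$ for a.e.\ $\lambda$.

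For part~(b), fix the countable dense set $D:=\Q\cap[0,\infty)$. For each $t\in D$, by the observation preceding the theorem --- which rests on $\lP^{\Phi_\lambda}(t)=\lP^{\Phi_{T\lambda}}(t)$ (a consequence of bounded distortion, since $Z_{n+1}^{\Phi_\lambda}(t)\asymp Z_1^{\Psi_{T\lambda}}(t)\cdot Z_n^{\Phi_{T\lambda}}(t)$ with implied constants $K^{\pm t}$, so that after dividing by $n+1$ and letting $n\to\infty$ the contributions of $\log Z_1^{\Psi_{T\lambda}}(t)$ and of the distortion factor vanish) together with ergodicity of $T$ --- one has $\lP^{\Phi_\lambda}(t)=\EP(t)$ for $m$-a.e.\ $\lambda$; in the degenerate case $Z_1^{\Psi_\lambda}(t)=\infty$ both sides equal $+\infty$ directly. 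Intersecting over $t\in D$ gives a full-measure set $\Lambda_0$ on which $\lP^{\Phi_\lambda}(t)=\EP(t)$ for all $t\in D$ simultaneously. Both $t\mapsto\lP^{\Phi_\lambda}(t)$ and $t\mapsto\EP(t)$ are decreasing (by Lemma~\ref{lem:monotonicity}, resp.\ by integrating that lemma's estimate over $\lambda$). Hence, for $\lambda\in\Lambda_0$,
\[
 B(\Phi_\lambda)=\sup\{t\ge 0:\lP^{\Phi_\lambda}(t)>0\}=\sup\{t\in D:\EP(t)>0\}=\inf\{t\ge 0:\EP(t)\le 0\},
\]
where the middle equality uses that a decreasing function positive at some $t_0$ is positive at every rational below $t_0$ (so the supremum may be computed along $D$), and the last equality uses monotonicity of $\EP$ together with the convention of Definition~\ref{defn:bowen} for the degenerate case $\EP(0)\le 0$. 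Intersecting $\Lambda_0$ with the full-measure set from part~(a) completes the proof.

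The argument is not deep: its substance is carried by Theorem~\ref{thm:bowensubexponential} and Corollary~\ref{EVclass}, both proved above. The points that need care are of bookkeeping type --- selecting a single full-measure set of $\lambda$ that simultaneously makes $\Phi_\lambda$ a legitimate system of the appropriate class, validates Bowen's formula for it, and gives $\lP^{\Phi_\lambda}(t)=\EP(t)$ for all rational $t$ --- and I expect the one genuinely content-bearing step to be the verification in part~(a) that, in the infinite-alphabet case, hypothesis~\eqref{120120709} forces $\Phi_\lambda\in\EV$, together with the careful separate treatment, throughout part~(b), of the parameters $t$ for which $Z_1^{\Psi_\lambda}(t)=\infty$.
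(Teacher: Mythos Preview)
Your proof is correct and follows the same approach as the paper: reduce to Bowen's formula for $\Phi_\lambda$, handling the finite-alphabet case via Theorem~\ref{thm:bowensubexponential} and the infinite case by verifying $\Phi_\lambda\in\EV$ (the paper takes $\gamma_i=\un M_i$ rather than $\ov M_i$, but this is immaterial) and invoking Corollary~\ref{EVclass}. Your part~(b) is in fact more careful than the paper's own treatment, which simply asserts that it suffices to establish Bowen's formula and relies implicitly on the a.e.\ constancy of $\lP^{\Phi_\lambda}(t)$ without addressing the passage from the per-$t$ null sets to a single null set via a countable dense family and monotonicity.
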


\begin{remark}[Remark 1]
 For infinite $I$, this reproves \cite[Theorem 3.18]{RoyU}
   in an entirely different way. For finite $I$, 
   \cite[Theorem 3.18]{RoyU} requires the assumption that
   $\un M_i>0$ for all $i\in I$, while we do not require
   any additional assumptions in this case.
\end{remark}
\begin{proof}
 
It suffices to prove that, for almost every $\lambda$,
  $\Phi_{\lambda}$ satisfies Bowen's formula. If $I$ is finite, this directly
  follows from Theorem \ref{thm:bowensubexponential}. 
  On the other hand, if $I$ is infinite, then it follows from the following
  observation.

 \begin{claim}
  If $I$ is infinite and $\Phi$ satisfies \eqref{120120709}, then
   $\Phi_{\lambda}\in \EV$ for almost every $\lambda\in\Lambda$. 
 \end{claim}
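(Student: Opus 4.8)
The plan is to show that for almost every $\lambda\in\Lambda$, the non-autonomous system $\Phi_\lambda$ is evenly varying, i.e.\ that there is a fixed sequence $(\gamma_i)_{i\in I}$ of positive reals and a constant $c\ge 1$ (both independent of $\lambda$) such that $\gamma_i/c \le \|D\phi_i^{T^n(\lambda)}\| \le c\gamma_i$ for all $n\in\N$ and all $i\in I$. The natural candidate for $\gamma_i$ is (a suitable normalization of) either $\un M_i$ or $\ov M_i$; say $\gamma_i := \un M_i$. The hypothesis \eqref{120120709}, namely $\sup_{i}\ov M_i/\un M_i =: C_0 <\infty$, then immediately gives $\gamma_i \le \|D\phi_i^{\mu}\| \le C_0\,\gamma_i$ for \emph{every} $i$ and almost every $\mu\in\Lambda$, by the definitions of essential infimum and essential supremum.

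The only subtlety is the interchange of ``almost every'' with the countably infinite family of conditions and with the iteration. First I would note that for each fixed $i\in I$, the set $N_i := \{\mu\in\Lambda : \|D\phi_i^\mu\| < \un M_i \text{ or } \|D\phi_i^\mu\| > \ov M_i\}$ has $m$-measure zero by definition of $\un M_i$ and $\ov M_i$ (here I use that $\mu\mapsto\|D\phi_i^\mu\|$ is measurable, which follows from the measurability of $\mu\mapsto\phi_i^\mu(x)$ together with conformality). Since $I$ is countable, $N := \bigcup_{i\in I} N_i$ still has measure zero. Next, because $T$ is measure-preserving, $T^{-n}(N)$ has measure zero for every $n\ge 0$, so $N^\ast := \bigcup_{n\ge 0} T^{-n}(N)$ has measure zero as well. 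For every $\lambda\notin N^\ast$ we then have $T^n(\lambda)\notin N$ for all $n$, hence $\un M_i \le \|D\phi_i^{T^n(\lambda)}\| \le \ov M_i \le C_0\un M_i$ for all $n\in\N$ and $i\in I$. Taking $\gamma_i := \un M_i$ and $c := C_0^{1/2}$ (or simply $c:=C_0$, rescaling $\gamma_i$ by $C_0^{1/2}$ if one wants the symmetric form $\gamma_i/c \le \cdot \le c\gamma_i$) shows $\Phi_\lambda\in\EV$.

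Having established the claim, the theorem follows: for $\lambda\notin N^\ast$, Corollary~\ref{EVclass} gives that Bowen's formula holds for $\Phi_\lambda$, i.e.\ $\HD(J(\Phi_\lambda)) = B(\Phi_\lambda) = \inf\{t\ge 0 : \lP^{\Phi_\lambda}(t) \le 0\}$. Since $T$ is ergodic and $\lP^{\Phi_\lambda}(t) = \lP^{\Phi_{T(\lambda)}}(t)$, the function $\lambda\mapsto\lP^{\Phi_\lambda}(t)$ is $m$-a.e.\ constant for each fixed $t$; intersecting over a countable dense set of $t$ and using monotonicity (Lemma~\ref{lem:monotonicity}), this common value is $\EP(t)$ for a.e.\ $\lambda$, so $\inf\{t\ge 0 : \lP^{\Phi_\lambda}(t)\le 0\} = \inf\{t\ge 0 : \EP(t)\le 0\}$ almost surely. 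In the finite-alphabet case one bypasses the evenly-varying reduction entirely and applies Theorem~\ref{thm:bowensubexponential} directly, since a finite system is trivially subexponentially bounded. I do not anticipate a genuine obstacle here; the one point requiring a little care is simply the bookkeeping that pushes the single null set $N$ forward through all iterates $T^{-n}$ and across the countable alphabet before invoking ergodicity, which is exactly what makes the ``for almost every $\lambda$'' statement legitimate.
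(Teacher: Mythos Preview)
Your proof is correct and follows essentially the same approach as the paper: set $\gamma_i:=\un M_i$, let $c$ be the finite supremum $\sup_i \ov M_i/\un M_i$, and verify the evenly-varying inequality for all $n$ and $i$ off a null set. The paper's own subproof is terser~-- it simply asserts that for almost every $\lambda$ and all $i,n$ one has $\un M_i\le\|D\phi_i^{T^n(\lambda)}\|\le\ov M_i\le\Delta\un M_i$~-- whereas you spell out explicitly the countable-union argument (over $i\in I$ and over iterates $T^{-n}$) that makes this legitimate; this extra care is justified and does not constitute a different method.
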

 \begin{subproof}
 Let $\Delta\geq 1$ be such that
    $\ov M_i \leq \Delta \un M_i$ for all $i$.
    Then, for all $i\in I$ and almost every $\lam\in\La$,
\[
\un M_i
\le ||D\phi_i^{T^n(\lam)}||
\le \ov M_i
\le\De\un M_i.
\]
So, putting $c:= \Delta$ and $\g_i:=\un M_i$ for all $i$, 
  we directly see that $\Phi_{\lam}\in\EV$. 
\end{subproof}
This concludes the proof of the Theorem.
\end{proof}

\

\section{Continuity of Pressure and Hausdorff
  Dimension}\label{continuity} 

In this section, we shall prove some results regarding the continuity 
  of pressure and Hausdorff dimension. Throughout the section, we will fix
  the sets $X\subset \R^d$ and $U\supset X$ involved in the definition
  of a non-autonomous conformal IFS, as well as the bounded distortion constant $K$
  and the contraction constant $\eta$ (together with
  the notion of when ``$m$ is large enough''
  for uniform contraction in Definition \ref{defn:ncifs}). 
  All systems considered
  in this section are assumed to satisfy Definition \ref{defn:ncifs} with this choice of 
  constants, so that the classes of systems studied here implicitly depend on these initial choices.

If $\Phi$ and $\Psi$ are two arbitrary autonomous conformal iterated
function systems (acting on the same set $X$) 
 with the same finite alphabet $I$, then $d(\Phi,\Psi)$, the
 distance between $\Phi$ and $\Psi$, is defined as follows:
\[
d(\Phi,\Psi)=\max\{\max\{||\phi_a-\psi_a||_\infty,||D\phi_a-D\psi_a||_\infty
\}:e\in I\}.
\]
Now, let $J=(I^{(k)})_{k=1}^\infty$ be an arbitrary sequence of finite
sets. If $\Phi=\{\Phi^{(n)}\}_{n=1}^\infty$ and
$\Psi=\{\Psi^{(n)}\}_{n=1}^\infty$ are arbitrary non-autonomous
conformal iterated function systems with the alphabets
$J=(I^{(k)})_{k=1}^\infty$, then
$d_u(\Phi,\Psi)$, the distance between $\Phi$ and $\Psi$, defined as follows:
\[
d_u(\Phi,\Psi)=\sup\lt\{d(\Phi^{(n)},\Psi^{(n)}):n\ge 1 \rt\} 
\le\max\{2,\diam(X)\}.
\]%%%LRG - Corrected
The subscript $u$ indicates here that the distance is ``uniform'' with
respect to all $n\in\N$.

By $\K_u(X,J)$ we denote the space of all non-autonomous conformal iterated 
function systems with alphabet $J$, acting on $X$ and topologized by
the metric $d_u$, for which the following two conditions are satisfied:

\sp 
\begin{itemize}
\item[(Ka)] there is $\ka>0$ such that 
    $\inf\{|D\phi_a^{(n)}(x)|:n\ge 1,\, a\in I^{(n)},\,x\in X\}>\ka$.
\item[(Kb)] The family of functions $\{|D\phi_a^{(n)})|:n\ge 1, \,
  a\in I^{(n)}\}$ is uniformly continuous; i.e. 
$$
\aligned
\forall (\e>0)\ \  &\exists (\delta>0) \  \  \forall (n\ge 1) \  \
\forall (a\in I^{(n)})\\
&||x-y||<\delta \, \  \imp\, \
\lt||D\phi_a^{(n)}(x)|-|D\phi_a^{(n)}(y)|\rt|<\e.
\endaligned
$$
\end{itemize}

\begin{remark}
  Note that property (Ka) implies, in particular, that the systems we consider here
    are uniformly finite. (Infinite systems will be considered at the end of the section.) 

  Property (Kb) is a strengthening of the uniform distortion property of $\Phi$. 
    Observe that this property is, again, automatic in dimensions $\geq 2$. 
\end{remark}

\bobs\label{o120120921}
Observe that if all finite sets $I^{(k)}$, $k\ge 1$, are the same,
say equal to $I$, and $\Phi$ is an autonomous system with alphabet $I$
acting on $X$, then $\Phi\in \K_u(X,J)$, where $J=(I)_{n=1}^\infty$.
\eobs

 We now prove the following auxiliary result, asserting roughly 
that if the $d$ metric of two systems is small, then the
distance between all the corresponding contractions $\phi_\om$ and
$\psi_\om$ for all finite words $\om$ are also small.

\blem\label{l120120922}
If $\Phi,\Psi\in \K_u(X,J)$, then for all positive integers $m\le n$,
all $\omega\in I^{m,n}$, and all $x\in X$, we have 
$$
|\phi_{\omega}^{m,n}(x)-\psi_{\omega}^{m,n}(x)|
\le (1-\eta)^{-1}d_u(\Phi,\Psi).
$$
\elem
\begin{proof}
Let $\omega_*=\omega_m\omega_{m+1}\ld\omega_{n-1}$. We have
$$
\aligned
|\phi_{\omega}^{m,n}(x) & -\psi_{\omega}^{m,n}(x)| \\
&\le |\phi_{\omega^*}^{m,n-1}(\phi_{\omega_n}^{(n)}(x))-
          \phi_{\omega^*}^{m,n-1}(\psi_{\omega_n}^{(n)}(x))| +
     |\phi_{\omega^*}^{m,n-1}(\psi_{\omega_n}^{(n)}(x))-
          \psi_{\omega^*}^{m,n-1}(\psi_{\omega_n}^{(n)}(x))| \\
&\le \eta^{n-m}d(\Phi,\Psi) + 
      |\phi_{\omega^*}^{m,n-1}(x')-
      \psi_{\omega^*}^{m,n-1}(x')|,
\endaligned
$$
where $x':=\psi_{\tau_n}^{(n)}(x)$. So, proceeding inductively we obtain
\begin{equation} \label{eq-1star}
|\phi_{\omega}^{m,n}(x)-\psi_{\omega}^{m,n}(x)|
\le \sum_{i=0}^{n-m} \eta^{i} d(\Phi,\Psi) 
\le (1-\eta)^{-1}d(\Phi,\Psi)\,. \qedhere
\end{equation}
\end{proof}

 We are ready to prove a continuity property of the lower pressure
  function $\lP(t)$ in the class $\K_u(X,J)$.

\bthm\label{t1na25}
 For every $t\geq 0$, the lower pressure function 
    \[ \K_u(X,J) \to \R; \Psi\mapsto \lP^{\Psi}(t) \]
   is continuous. 
\ethm
\begin{proof}
 Let $\Phi\in \K_u(X,J)$. Note that, by definition of the distance $d_U$, there is
    $\kappa>0$ such that, if 
     $\Psi\in \K_u(X,J)$ is sufficiently close to $\Phi$, then 
    $\Phi$ satisfies (Ka) with this choice of $\kappa$. 

Fix $\e>0$, and let $\delta>0$ 
   according to condition (Kb) for the system $\Phi$. Take an
arbitrary system $\Psi\in \K(X,J)$ such that
$$
d_u(\Psi,\Phi)<(1-\eta)\min\{\e,\delta\}.
$$
We may assume that $\delta>0$ is chosen so small that $\Psi$ satisfies (Ka) for
   $\kappa$ as mentioned above.

Fix an arbitrary $n\ge 1$, $x,y\in X$ and $\om\in
I^{(n)}$. Using also (Kb) for $\Phi$ and Lemma~\ref{l120120922}, we then get
$$
\aligned
\lt|\log\frac{|D\psi_\om(x)|}{|D\phi_\om(x)|}\rt|
&=\lt|\sum_{j=1}^n\log\lt|D\psi_{\om_j}^{(j)}(\psi_{\sg^j(\om)}^{(j+1)}(x))
   \rt|-     
     \log\lt|D\phi_{\om_j}^{(j)}(\phi_{\sg^j(\om)}^{(j+1)}(x))\rt|\rt| \\
&\le
\sum_{j=1}^n\Bigg|\log\lt|D\psi_{\om_j}^{(j)}(\psi_{\sg^j(\om)}^{(j+1)}(x))
   \rt|-   
     \log\lt|D\phi_{\om_j}^{(j)}(\phi_{\sg^j(\om)}^{(j+1)}(x))\rt|\Bigg|
     \\
&=\sum_{j=1}^n\Bigg|\lt(\log\lt|D\psi_{\om_j}^{(j)}
        (\psi_{\sg^j(\om)}^{(j+1)} (x))   
   \rt|-\log\lt|D\phi_{\om_j}^{(j)}(\psi_{\sg^j(\om)}^{(j+1)}(x))\rt|\rt)+\\
& \qquad  +   \lt(
\log\lt|D\phi_{\om_j}^{(j)}(\psi_{\sg^j(\om)}^{(j+1)}(x))\rt| - 
   \log\lt|D\phi_{\om_j}^{(j)}(\phi_{\sg^j(\om)}^{(j+1)}(x))\rt|\rt)\Bigg|\\
&\le \sum_{j=1}^n\Bigg|\log\lt|D\psi_{\om_j}^{(j)}
        (\psi_{\sg^j(\om)}^{(j+1)} (x))   
   \rt|-\log\lt|D\phi_{\om_j}^{(j)}(\psi_{\sg^j(\om)}^{(j+1)}(x))
   \rt|\Bigg|+\\ 
&\qquad  +   \sum_{j=1}^n\Bigg| 
\log\lt|D\phi_{\om_j}^{(j)}(\psi_{\sg^j(\om)}^{(j+1)}(x))\rt| - 
   \log\lt|D\phi_{\om_j}^{(j)}(\phi_{\sg^j(\om)}^{(j+1)}(x))\rt|\Bigg|\\
&\le \ka^{-1}\sum_{j=1}^n
     \Big|\lt|D\psi_{\om_j}^{(j)}(\psi_{\sg^j(\om)}^{(j+1)}(x))\rt| - 
     \lt|D\phi_{\om_j}^{(j)}(\psi_{\sg^j(\om)}^{(j+1)}(x))\rt|\Big|+\\
&\qquad  + \ka^{-1}\sum_{j=1}^n 
      \Big|\lt|D\phi_{\om_j}^{(j)}(\psi_{\sg^j(\om)}^{(j+1)}(x))\rt| - 
      \lt|D\phi_{\om_j}^{(j)}(\phi_{\sg^j(\om)}^{(j+1)}(x))\rt|\Big|\\
&\le \ka^{-1}nd_u(\Psi,\Phi)+\ka^{-1}n\e \\
&\le  2\ka^{-1}\e n.
\endaligned
$$
Therefore,
$$
K^{-1}\exp\(- 4\ka^{-1}\e n\) 
\le \frac{\|D\psi_\om\|}{\|D\phi_\om\|}
\le K\exp\(4\ka^{-1}\e n\).
$$
Hence,
$$
K^{-t}\exp\(- 4\ka^{-1}t\e n\)
\le  \frac{Z_n^{\Psi}(t)}{Z_n^{\Psi}(t)}
\le K^t\exp\(4\ka^{-1}t\e n\).
$$
So,
$$
|\un\P_\Psi(t)-\un\P_\Phi(t)|<4\ka^{-1}t\e.
$$
Since $\eps>0$ was arbitary, we are done.
\end{proof}

Since the pressure function $[0,+\infty)\ni t\mapsto
\lP^\Psi(t)$ is strictly decreasing, and, by condition (Ka), our
system is uniformly bounded, along with
Theorem~\ref{thm:bowensubexponential}, this theorem gives
the following.

\bcor\label{c2na25}
The Hausdorff dimension function
$\K_u(X,J)\to [0,d]; \Phi\mapsto\HD(J(\Phi))$ is continuous.
\ecor

Our consideration in this section up to now applied only to uniformly finite
  systems. We shall now deal with much more
general finite and truly infinite non-autonomous systems.

\bdfn\label{d120120707B}
Assume that for every $n\ge 1$, $I^{(n)}$ is an initial segment of $\N$
(i.e., either $I^{(n)}= \{1,\dots,m\}$ for some $m$, or  $I^{(n)}=\N$). 
%%%LRG - clarified definition of initial segment
 Let $J=(I^{(n)})_{j=1}^\infty$, and let
  $\Phi$ be a nonautonomous system with alphabet $J$. 

Suppose that $\Phi$ satisfies condition (Kb) -- i.e., the
  derivatives $\phi_a^{(n)}$ are uniformly continuous -- and that
  \[ \inf_{n\in\N} \|D\phi_a^{(n)}\| > 0 \]
   for all $a$. 

Suppose furthermore
  that, for every $t\in [0,d]$  and all $\delta>0$, there is $M>0$ such that
 \begin{enumerate}
   \item the sums $Z_1^{(n)}(t)$ are either infinite for all $n$ or finite for all $n$;
   \item if $Z_1(t)<\infty$, then, for all $n\geq 1$, 
        \[ \sum_{k\leq M} \|D\phi_k^{(n)}\|^t \geq (1-\delta)\cdot Z_1^{(n)}(t); \]
   \item if $Z_1(t)=\infty$, then, for all $n\geq 1$, then
       \[ \sum_{k\leq M} \|D\phi_k^{(n)}\|^t \geq 1/\delta. \]
 \end{enumerate}

  Then we say that
  $\Phi$ belongs to the class $\cM_+$.
\edfn

Since $\cM_+\sbt \cM$, we immediately obtain the following
 fact by virtue of Corollary~\ref{BowenM}.

\bthm\label{BowenM2}
Bowen's formula holds for all systems $\Phi \in \cM_+$.
\ethm

For any systems $\Phi, \Psi\in\cM_+$ define
$$
d_p(\Phi,\Psi)=\sum_{n=1}^\infty 
  2^{-n}d\(\Psi^{(n)},\Phi^{(n)}\).
$$
The number $d_p(\Phi,\Psi)$ is obviously a metric on the space
$\cM$. In contrast to $d_u(\Phi,\Psi)$ we refer to it as the pointwise
distance between the systems $\Phi$ and $\Psi$. 

\

\bdfn\label{b0na27}
Let $\Phi\in\cM_+$ and $\(\Phi_n\)_{n=1}^\infty\sbt\cM_+$. We say that
the sequence $\(\Phi_n\)_1^\infty$ $\om$--converges to $\Phi$ if 
\begin{itemize}
\item[(a)]
$$
\lim_{n\to\infty}d_p(\Phi_n,\Phi)=0
$$
and

\sp\item[(b)]
 For every $t\in [0,d]$ and all $\delta>0$, the number $M$ in the definition of the
   class $\cM_+$ can be chosen independently of $n\geq 1$. 
\end{itemize}
\edfn

\

\fr Note that in the case of autonomous sequences, $\lam$--convergence
from \cite{royurblambda} implies $\om$--convergence. Our conditions here
are weaker! We shall prove the following.

\

\bthm\label{t1na27}
If $\Phi\in\cM_+$, $\(\Phi_n\)_1^\infty\sbt\cM_+$ and
$\(\Phi_n\)_1^\infty$ $\om$-converges to $\Phi$, then
$$
\lim_{n\to\infty}\lP_{\Phi_n}(t)=\lP_{\Phi}(t)
$$
for all $t\ge 0$.
\ethm
\begin{proof}
  For $M\in\N$, 
   let $\Phi(M)$ and $\Phi_n(M)$ denote the finite systems obtained by truncating 
    all alphabets at symbol $M$. 

   By the definition of class $\cM_+$ and Proposition~\ref{p1na20111107},
    we see that, for every $t$, 
      \[ \lP^{\Phi(M)}(t) \to \lP^{\Phi}(t) \]
     as $M\to\infty$, and similarly for $\Phi_n$. Furthermore, the convergence is
    uniform in $n$. 

   Furthermore, the system $\Phi(M)$ belongs to the class
     $\K_u(X, \tilde{J})$ (where $\tilde{J}$ is the alphabet of the system $\Phi(M)$), and
     $\Phi_n(M)$ converges to $\Phi(M)$ in the $d_u$-metric as $n\to\infty$. 

    The claim follows.
\end{proof}

 Recall that, for every non-auonomous system $\Phi$, the pressure function
$t\mapsto \P_{\Phi}(t)$ is strictly decreasing throughout its domain
of finiteness. Thus we obtain the following
  immediate consequence of Theorem~\ref{t1na27} and
 Theorem~\ref{BowenM2} (Bowen's formula).

\bthm\label{t1na29}
If $\Phi\in\cM_+$, $\(\Phi_n\)_1^\infty\sbt\cM_+$ and
$\(\Phi_n\)_1^\infty$ $\om$--converges to $\Phi$, then
$$
\lim_{n\to\infty}\HD\(J_{\Phi_n}\)=\HD\(J(\Phi)\).
$$
\ethm

\section{Appendix: Unbounded orbits in Julia sets and in limit sets of infinite autonomous IFS} \label{sec:applications}

Let $\Phi=\{\phi_n:n\in\N\}$ be an infinite (autonomous) conformal iterated
function system. (So $I^{(n)}=\N$ for all $n$, and $I^{\infty}=\N^{\N}$.) We are interested in the set of
 points $x\in J(\Phi)$ that have dense orbits.

More precisely, let $\sg:\N^\N\to\N^\N$ be the shift map, i.e.
$$
\sg(\om)_n=\om_{n+1}.
$$
We define
$$
I^{\infty}_{\dense}=\lt\{\om\in\N^\N:\ov{\{\sg^n(\om):n\ge 0\}}=\N^\N\rt\},
$$
i.e $I^{\infty}_{\dense}$ consists of all points in $\N^\N$ whose forward
orbit under the shift map is dense in $\N^\N$. Recall the definition of the
 projection map $\pi_{\Phi}:I^{\infty}\to J(\Phi)$ from Definition \ref{deflem:projection}. We set 
$$
J_\dense(\Phi)=\pi_\Phi\(I^{\infty}_{\dense}\).
$$
Since $J(\Phi)=\pi_\Phi\(I^{\infty}\)$ and the map $\pi_\phi$ is continuous, we thus get that
$$
\ov{\{\pi_\Phi(\sg^n(\om)):n\ge 0\}}=\ov{J(\Phi)}.
$$

\begin{thm}\label{denseIFS}
If $\Phi$ is an infinite autonomous iterated function system, then
$$
\HD(J_\dense(\Phi))=\HD(J(\Phi)).
$$
\end{thm}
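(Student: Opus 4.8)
The inclusion $J_\dense(\Phi)\subseteq J(\Phi)$ is trivial, so only the inequality $\HD(J_\dense(\Phi))\ge\HD(J(\Phi))$ needs proof, and we may assume $\HD(J(\Phi))>0$. An infinite autonomous system has $I^{(n)}=\N$ and automatically lies in the class $\cM$: conditions (a)--(c) of Definition~\ref{d120120707} reduce to statements about tails of the single series $\sum_k\|D\phi_k\|^t$. Hence $\HD(J(\Phi))=B(\Phi)$ by Corollary~\ref{BowenM}, and the Observation preceding that corollary yields a finite, subexponentially bounded subsystem $\Phi_F$ (with alphabets $I_F^{(m)}\subset\N$) such that $\lP^{\Phi_F}(t)=\lP^{\Phi}(t)$ for all $t>0$, so that $B(\Phi_F)=B(\Phi)$ and $\HD(J(\Phi_F))=B(\Phi_F)$ by Theorem~\ref{thm:bowensubexponential}. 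The plan is to interleave the steps of $\Phi_F$ with short ``forced'' blocks that make every finite word over $\N$ occur, producing a non-autonomous system $\Psi$ with $J(\Psi)\subseteq J_\dense(\Phi)$ and $B(\Psi)=B(\Phi_F)$.

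Fix an enumeration $\tau_1,\tau_2,\dots$ of all finite words over $\N$ and a rapidly increasing sequence $0=N_0<N_1<N_2<\cdots$, specified below. For $N_{k-1}<n\le N_k-|\tau_k|$ let $\Psi$ perform the next step of $\Phi_F$ (so that the alphabets used at the ``free'' steps of $\Psi$, read in order, are exactly $I_F^{(1)},I_F^{(2)},\dots$), and for $N_k-|\tau_k|<n\le N_k$ let $I^{(n)}$ be the singleton prescribing the corresponding symbol of $\tau_k$; all maps are drawn from $\{\phi_i:i\in\N\}$. Then $\Psi$ is a NCIFS (open set, conformality, bounded distortion and uniform contraction are inherited from $\Phi$), and $\phi^{\Psi}_{\om|_n}=\phi^{\Phi}_{\om|_n}$ for all $\om\in I^{\infty}(\Psi)\subset\N^{\N}$, so $\pi_{\Psi}=\pi_{\Phi}|_{I^{\infty}(\Psi)}$. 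Every $\om\in I^{\infty}(\Psi)$ contains each $\tau_k$ as a block, hence has dense shift orbit in $\N^{\N}$; thus $I^{\infty}(\Psi)\subseteq I^{\infty}_{\dense}$ and $J(\Psi)=\pi_{\Phi}(I^{\infty}(\Psi))\subseteq J_\dense(\Phi)$. Provided $N_k\ge 2^k$, the number $k(n)$ of completed forced blocks by time $n$ is $O(\log n)$, so $\Psi$ is again subexponentially bounded and Bowen's formula holds for it by Theorem~\ref{thm:bowensubexponential}: $\HD(J(\Psi))=B(\Psi)$.

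It remains to show $B(\Psi)=B(\Phi_F)$, for which I would prove $\lP^{\Psi}(t)=\lP^{\Phi_F}(t)$ for every $t\ge 0$. Grouping a length-$n$ word of $\Psi$ into its $j=k(n)$ full forced blocks $\tau_1,\dots,\tau_j$ and the $\le j+1$ intervening runs of $\Phi_F$-steps (of total length $m=n-O(\sum_{i\le j}|\tau_i|)$), bounded distortion gives
$$
Z_n^{\Psi}(t)\;\asymp\; Z_m^{\Phi_F}(t)\cdot\prod_{i\le j}\|D\phi_{\tau_i}\|^{t}
$$
up to a multiplicative factor $K^{\pm t\cdot O(j+\sum_{i\le j}|\tau_i|)}$. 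Choosing the $N_k$ inductively so that $N_{k-1}\ge k\bigl(1+\sum_{i\le k}|\tau_i|+\sum_{i\le k}\log(1/\|D\phi_{\tau_i}\|)\bigr)$ makes $j$, $\sum_{i\le j}|\tau_i|$ and $\sum_{i\le j}\log(1/\|D\phi_{\tau_i}\|)$ each $o(n)$, uniformly in $t\in[0,d]$, so that $\tfrac1n\log Z_n^{\Psi}(t)-\tfrac1m\log Z_m^{\Phi_F}(t)\to 0$ and $\lP^{\Psi}(t)=\lP^{\Phi_F}(t)$ (both are $-\infty$ for $t>d$ by the open set condition). Therefore $B(\Psi)=B(\Phi_F)=B(\Phi)$, and
$$
\HD(J(\Phi))=B(\Phi)=B(\Psi)=\HD(J(\Psi))\le\HD(J_\dense(\Phi))\le\HD(J(\Phi)),
$$
which proves the theorem. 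The main obstacle is exactly this last step: the forced blocks must be placed sparsely enough that inserting them---even though each inserted symbol may contract by an arbitrarily small factor---perturbs neither the subexponential bound on $\#I^{(n)}$ nor the lower pressure, and the estimate must be uniform in the exponent $t$ and valid at every scale $n$, not merely along the subsequence $n=N_k$.
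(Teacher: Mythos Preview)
Your proof is correct and follows essentially the same strategy as the paper: pass to a finite subexponentially bounded subsystem with the same pressure, interleave sparsely placed forced words to guarantee dense orbits, and show the lower pressure is unchanged so that Bowen's formula for the resulting system yields the desired bound. The paper's version is slightly cleaner in two respects: it uses the explicit subsystem $I_f^{(n)}=\{1,\dots,n\}$ (invoking Proposition~\ref{p1na20111107} directly rather than the class~$\cM$ observation), and it inserts each forced word $\om^k$ as a \emph{single} composed map $\phi_{\om^k}$ at time $n_k$, so that the modified system and the subsystem share the same time index and no reparametrization $m=m(n)$ is needed in the pressure comparison.
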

\begin{remark}
This result was previously known in the case when the system $\Phi$
is strongly regular, or at least if it is regular and its entropy is finite (see Corollary~4.4.6 in
\cite{mauldinurbanskiGDMS}), but not generally. We note that our proof also works for finite systems,
but here the result is well known; see
\cite[Corollary~4.4.6]{mauldinurbanskiGDMS}
\end{remark}
\begin{proof}
 Let $\Psi$ be the non-autonomous subsystem of $\Phi$ defined by the index sets
    \[ I^{(n)}_f := \{1,\dots, n\}. \]%%%LRG - Adjusted name for the index sets. 
 It follows from Proposition \ref{prop:approximation} that $B(\Psi)=B(\Phi)$. 

Now let $\{\om^{k}\}_{k\in\N}$ be an enumeration of all
finite words; i.e., an enumeration of the (countable) set
  $\bigcup_{j}I^j$. Given an arbitrary increasing sequence
$(n_k)_1^\infty$, we define a system $\Theta$ by 
\[ \Theta^{(n)} := \begin{cases}
                               \Psi^{(n)} &\text{if $n\neq n_k$ for all $k\in\N$}; \\
      \{\phi_{\om^{k}}\} & \text{if $n = n_k$ for some $k\in\N$}. \end{cases} \]                               
It is clear from this definition that
\beq\label{1na39}
J(\Theta)\sbt J_\dense(\Phi).
\eeq

Furthermore, if the sequence $(n_k)$ is chosen to grow sufficiently rapidly, then
\begin{equation}\label{eqn:equalityofpressures}
   P^{\Theta}(t)=P^{\Psi}(t) 
\end{equation}
for all $t \geq 0$. Indeed, the systems 
   $\Theta$ and $\Psi$ agree except at the times $n_k$, where the contribution to
   $Z_n^{\Psi}$ (with $n\geq n_k$)
    is
    \begin{equation}\label{eqn:ZnPsi} Z_1^{(n_k)}(t) = \sum_{m\in I_f^{(n_k)}} \|D\phi_m\|^t =
            \sum_{m=1}^{n_k} \|D\phi_m\|^t \leq n_k, \end{equation}
   while the contribution to $Z_n^{\Theta}$ is precisely
    $\|D\phi_{\omega^k}\|^t$. Also observe that the sum $Z_1^{(n_k)}(t)$ in~\eqref{eqn:ZnPsi}
    is bounded below by
    $\|D\phi_1\|^d$. 

So let us suppose that $(n_k)$ is chosen such that
    $\log n_k \geq \max(k, |\log \|D\phi_{\omega^k}\|\,|)$. 
    Let $n\in\N$ be large, and let $k$ be maximal such that
    $n_k\leq n$. Then 
  \begin{align*}
    |\log Z_n^{\Theta}(t) - \log Z_n^{\Psi}(t)| &\leq
     2k \log K + \sum_{j=1}^k \left|\log Z_1^{(n_j)}(t)\right| 
        + \sum_{j=1}^k t\left|\log\|D\phi_{\omega^j}\|\,\right|\\
    &\leq 2k \log K + k\cdot \log(n_k) +  k\cdot d \cdot \log(n_k) = o(n),
   \end{align*}
  implying~\eqref{eqn:equalityofpressures}. %%%LRG - Added details.
By Theorem \ref{thm:bowensubexponential}, it follows that
   \[ \HD(J(\Phi)) \geq \HD(J(\Theta)) = B(\Theta) = B(\Psi) = B(\Phi) \geq \HD(J(\Phi)), \]
 as claimed. 
\end{proof}

We can apply the same principle to subsets of Julia sets of certain holomorphic
 functions known as \emph{Ahlfors islands maps}. 
  For definitions, we refer to \cite{hypdim}. Here we note only that
 every non-constant, non-linear meromorphic function is an Ahlfors islands map; hence the following result
 implies
  Theorem~\ref{thm:meromorphic}.

\begin{thm}[Subsets of Julia sets]\label{juliadense}
Let $S$ be a compact Riemann surface, let $U\subset S$ be open and
nonempty, and let $f:U\to S$ be a nonelementary 
Ahlfors islands map in the sense of Epstein \cite[Definition~2.1]{hypdim}.  

Set 
  \[ J_{r,\dense}(f) := \{z\in J_r(f): \cl{\bigcup_{j\geq 0} f^j(z)}=J(f)\}, \]
  where $J_r(f)$ is the radial Julia set of $f$ 
  \cite[Definition~2.5]{hypdim}.
   Then 
      \[ \hdim(J_{r,\dense}(f)) = \hdim(J_r(f)) = \hypdim(f), \]
     where $\hypdim(f)$ denotes the hyperbolic dimension of $f$.
 \end{thm}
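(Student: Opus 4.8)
The plan is to follow the scheme of Theorem~\ref{denseIFS}, but with the density requirement referring to the whole Julia set $J(f)$ rather than to the closure of a single limit set. Since $J_{r,\dense}(f)\subset J_r(f)$ we immediately get $\hdim(J_{r,\dense}(f))\le\hdim(J_r(f))$, and the equality $\hdim(J_r(f))=\hypdim(f)$ for nonelementary Ahlfors islands maps is exactly the corresponding result in \cite{hypdim}. Moreover, $\hypdim(f)$ may be computed as the supremum of $\hdim(J(\Phi_0))$ over all \emph{finite} conformal iterated function systems $\Phi_0$ whose maps are univalent inverse branches of iterates of $f$ on a fixed neighbourhood of a hyperbolic subset of $J(f)$ (one of the standard descriptions of the hyperbolic dimension; see \cite{hypdim}). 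Hence the theorem reduces to showing: for every such $\Phi_0$, one has $\hdim(J_{r,\dense}(f))\ge\hdim(J(\Phi_0))$.

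So fix such a $\Phi_0=\{\phi_i\}_{i\in I}$, whose maps are univalent inverse branches of $f^{N}$ defined with uniform bounded distortion and a uniform contraction factor $\eta_0<1$ on a neighbourhood $V$ of the associated hyperbolic set $K$, with $\overline{\phi_i(V)}\subset V$ and the open set condition satisfied, so that $J(\Phi_0)\subset K\subset J_r(f)$. Enlarge the phase space to a compact set $X$ with $\overline{\interior(X)}=X$ containing $V$ and (the relevant part of) $J(f)$, on which all our maps will act. Choose a countable family $\{D_k\}_{k\in\N}$ of open sets such that every open set meeting $J(f)$ contains some $D_k$; then a point whose forward orbit meets every $D_k$ has orbit dense in $J(f)$. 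For each $k$, the Ahlfors islands property of $f$ yields an integer $m_k$ and a univalent inverse branch $\xi_k$ of $f^{m_k}$, defined on $V$ with Koebe/Ahlfors distortion bounds uniform in $k$, such that $\overline{\xi_k(V)}\subset V$ and $f^{j_k}(\xi_k(V))\subset D_k$ for some $0\le j_k<m_k$; absorbing a sufficiently long word of $\Phi_0$ into $\xi_k$ we may also assume $\|D\xi_k\|\le\eta_0$.

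Now pick a rapidly increasing sequence $(n_k)_{k\ge 1}$ with $n_k/k\to\infty$ and $\log n_k\ge\max\bigl(k,\,\bigl|\log\|D\xi_k\|\bigr|\bigr)$, and define a non-autonomous system $\Theta$ on $X$ by $\Theta^{(n)}:=\Phi_0$ for $n\notin\{n_k\}$ and $\Theta^{(n_k)}:=\{\xi_k\}$. Then $\Theta$ is uniformly finite, hence subexponentially bounded, and satisfies Definition~\ref{defn:ncifs}. Each $x\in J(\Theta)$ has, by construction of the $\xi_k$, a forward $f$-orbit meeting every $D_k$ and therefore dense in $J(f)$; and since the gaps $n_{k+1}-n_k$ tend to infinity, the orbit of $x$ returns to $V$ and undergoes bounded-distortion contractions over a definite scale there at a sequence of times tending to infinity, so $x\in J_r(f)$. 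Thus $J(\Theta)\subset J_{r,\dense}(f)$. Finally, the choice of $(n_k)$ makes the insertions negligible for the partition sums exactly as in the proof of Theorem~\ref{denseIFS}, so $\lP^{\Theta}(t)=\lP^{\Phi_0}(t)$ for all $t\ge 0$, and in particular $B(\Theta)=B(\Phi_0)$. Since $\Theta$ and $\Phi_0$ are both finite, Bowen's formula applies to each by Theorem~\ref{thm:bowensubexponential}, giving
\[
\hdim(J_{r,\dense}(f))\ \ge\ \hdim(J(\Theta))\ =\ B(\Theta)\ =\ B(\Phi_0)\ =\ \hdim(J(\Phi_0)).
\]
Taking the supremum over all admissible $\Phi_0$ yields $\hdim(J_{r,\dense}(f))\ge\hypdim(f)$, and combined with the first paragraph this shows that all three quantities in the theorem coincide.

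The step I expect to be the main obstacle is the construction of the excursion maps $\xi_k$ together with the verification that the composed system $\Theta$ genuinely is an NCIFS in the sense of Definition~\ref{defn:ncifs} (uniform contraction, uniform bounded distortion) while still forcing the orbit into a prescribed region $D_k$ located anywhere in $J(f)$. This is precisely where the Ahlfors islands hypothesis on $f$ is used, controlling univalence and distortion of inverse iterates landing in arbitrary parts of the Julia set, and it is the ingredient already packaged for us in \cite{hypdim}. By contrast, the estimate showing that the sparse insertions leave $\lP$ (and hence $B$) unchanged is a direct transcription of the bookkeeping carried out in the proof of Theorem~\ref{denseIFS}, and I expect it to be routine.
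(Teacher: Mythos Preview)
Your approach is essentially the one taken in the paper. One point needs tightening: do not enlarge the phase space. The inverse branches $\phi_i$ constituting $\Phi_0$ are only defined on the neighbourhood $V$ of the hyperbolic set, so if $X$ is taken to contain ``the relevant part of $J(f)$'' then the maps of $\Phi_0$ need not act on $X$ and Definition~\ref{defn:ncifs} fails. The paper keeps $X$ equal to the original domain of the finite system and instead constructs each excursion map explicitly as a composition $\theta_k\circ\psi_{z_k,\eps_k}$: here $\psi_{z_k,\eps_k}:U\to B(z_k,\eps_k)$ is the inverse branch supplied by the Ahlfors islands argument of \cite{hypdim} (this is your ``going out'' step), and $\theta_k$ is an inverse branch, obtained from the blowing-up property of $J(f)$, taking $B(z_k,\eps_k)$ back into $\interior(X)$. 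This two-step construction is exactly what makes your $\xi_k$ a self-map of the original $X$, and is the concrete content behind the step you flagged as the main obstacle; the rest of your argument matches the paper's.
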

 \begin{proof}
The second equality was proved in \cite{hypdim}. It follows from the proof that, for every 
 $h\in [0,\hypdim(f) )$, there is a finite conformal iterated function system $\Psi=\{\psi_i:i\in I\}$ 
 with the following properties.

\begin{enumerate}
\item  $X$, the domain of the system $\Psi$, is a closed topological disk with analytic boundary.
\item Each map $\psi_i$, $i\in I$, is a holomorphic inverse branch of
  some iterate of $f$, and extends to a conformal map defined on some larger disk $U\supset X$, independent of
     $i$.
\item $\HD(J(\Psi))\ge h$.
\item For every $z\in J(f)$ and every $\eps > 0$ there exists an
  analytic inverse branch
  $\psi_{z,\eps}:U\to S$ of some iterate of $f$ 
  such that 
  $\psi_{z,\eps}(U)\sbt B(z,\eps)$. (To measure distances, we fix 
  some metric on $S$; e.g.\ a conformal metric of constant curvature.)
\end{enumerate}

Now choose a sequence $(z_k)_{k\in\N}$ in $J(f)$ that is dense in $J(f)$. Since $J(f)$ is uncountable, we may assume that
  no $z_k$ belongs to the orbit of a critical point. 
  Now $\interior(X)$ intersects the Julia set of $f$~-- indeed, $J(\Psi)\subset J(f)$ by definition~--
  and hence the family $f^n|_{\interior(X)}$ omits at most finitely many values by the
   blowing-up property of the Julia set. Hence we can assume without loss of
   generality that each $z_k$ has an iterated preimage in $\interior(X)$.
  That is, for each $k$, there is some $m_k$ and an iterated preimage $\tilde{z}_k\in f^{-m_k}(z_k)$ such that
    $\tilde{z}_k\in \interior(X)$. 
   Let $\eps=\eps_k\leq 1/k$ be so small that
    the branch $\theta_k$ of $f^{-m_k}$ that takes $z_k$ to $\tilde{z}_k$ is 
    defined on $B(z_k,\eps_k)$ and takes values in 
    $\interior(X)$ there. 
%%%LRG - Detailed the argument.

 Define
   \[ \phi_k := \theta_k\circ \psi_{z_k,\eps_k},  \]
 let $(n_k)$ be a sufficiently growing sequence of positive integers, and set
    \[ \Phi^{(n)} := \begin{cases}
                            \Psi &\text{if $n\neq n_k$ for all $k\in\N$}; \\
                              \{\phi_k\} &\text{if $n=n_k$ for some $k\in\N$}.\end{cases} \]

  By definition, the $f$-orbit of any point in $J(\Phi)$ passes through 
    $B(z_k,1/k)$ for all $k$, and hence is dense in $J(\Phi)$. 
    We also note that $J(\Phi)\subset J_r(f)$ by the definition of the 
    radial Julia set. 

The non-autonomous system $\Phi$ is uniformly finite,
  hence $\HD(J(\Phi))=B(\Phi)$ by
   Theorem \ref{thm:bowensubexponential} (or Corollary
   \ref{cor:weaklybalancedbowen}).
   If the sequence $(n_k)$ grows sufficiently quickly, then  (as in the proof of the
   preceding theorem)
    $B(\Phi)=B(\Psi)=\HD(J(\Psi))\geq h$. Since $h<\hypdim(f)$ was
    arbitrary, we are done. 
 \end{proof}

\end{document}